\numberwithin{equation}{section}
\newtheorem{thm}[equation]{Theorem}
\newtheorem{prop}[equation]{Proposition}
\newtheorem{lemma}[equation]{Lemma}
\newtheorem{cor}[equation]{Corollary}
\theoremstyle{definition}
\newtheorem{rmk}[equation]{Remark}
\newcommand{\F}{\mathbb{F}}
\newcommand{\Z}{\mathbb{Z}}
\DeclareMathOperator{\Tr}{Tr}
\newcommand{\mybar}[1]{#1\llap{$\overline{\phantom{\rm#1}}$}}
\newcommand{\abs}[1]{\lvert #1 \rvert}
\begin{document}

\title
[Roots of certain polynomials]
{Roots of certain polynomials over finite fields}

\author{Zhiguo Ding}
\address{
  Hunan Institute of Traffic Engineering,
  Hengyang, Hunan 421001 China
}
\email{ding8191@qq.com}

\author{Michael E. Zieve}
\address{
  Department of Mathematics,
  University of Michigan,
  530 Church Street,
  Ann Arbor, MI 48109-1043 USA
}
\email{zieve@umich.edu}
\urladdr{http://www.math.lsa.umich.edu/$\sim$zieve/}

\thanks{The authors thank Lijing Zheng for sharing a preliminary version of \cite{ZKZPL}, and thank Faruk G\"olo\u{g}lu for valuable correspondence.}

\keywords{Projective polynomial, finite field}

\date{\today}

\begin{abstract}
We determine the roots in $\F_{q^3}$ of the polynomial $X^{2q^\ell+1}+X+a$ for each positive integer $\ell$ and each $a\in\F_q$, where $q$ is a power of $2$. 
We introduce a new approach for this type of question, and we obtain results which are more explicit than the previous results in this area. 
Our results resolve an open problem and a conjecture of Zheng, Kan, Zhang, Peng, and Li.
\end{abstract}

\begin{NoHyper}
\maketitle
\end{NoHyper}

%#######################################################################
%#######################################################################

\section{Introduction}

In case $q$ and $Q$ are powers of $2$, the roots in $\F_q$ of polynomials of the form $X^{Q+1}+X+a$ has attracted much attention. 
For instance, the number of such roots is studied in \cite{Bluhermain,BluherAPNBC,BluherDicksonidentity,BTT,Dillonexc,DFHR,vzGGZ,GologluPP,GologluAPNbiprojective,GKLWZ,GKZdiscrete,HHKZLJ,HK,HK2,HKN,HKZLJ,HZ,KCM,KM,MS,QTL,ST,Ugolini,ZLHKasami}, 
and this number has been applied to coding theory \cite{BH,HZ,LTW,XCQ}, APN and related functions in cryptography and combinatorics \cite{BP,BluherAPNBC,BTT,BC,GologluAPNbiprojective,KZ,LHXZ,QTL,Taniguchi}, 
division rings and combinatorial designs \cite{BBKGMP,Dillonexc,GK,HuK,Knuth,KMW,Tang,XLW,XCQ}, cross-correlation of $m$-sequences \cite{DFHR,HHKZLJ,HKN,HKZLJ,JHZ,LTW,ZLHKasami}, 
dynamics over finite fields \cite{Ugolini}, non-uniqueness of functional decomposition of polynomials \cite{vzG,vzGGZ}, permutation polynomials and rational functions \cite{GologluPP,KMCLLJ,KMKJ,ZKZPL}, 
and computation of discrete logarithms in multiplicative groups \cite{GGMZ,GGMZ2,GKLWZ,GKZ2,GKZdiscrete,GKZindiscreet}, elliptic curves \cite{GKZec}, and Jacobians of algebraic curves \cite{Massierer}. 
The roots of $X^{Q+1}+X+a$ (rather than just their number) were studied in \cite{GologluAPNtrihex,GologluPP,HK2,KCM,KCMcomplete,KM,MTW,MKCT}.
In particular, several of the above papers reduce the question of determining the number of roots (or exhibiting the roots) of polynomials of the form $X^{Q+1}+X+a$ to the study of properties of an associated recursively defined sequence of polynomials.
In this paper we determine the roots of a certain class of polynomials of this form, obtaining descriptions for both the roots and the number of roots which are much more explicit than those in previous papers. 
In particular, we find an unexpected connection with Dickson polynomials.

Our results use the following notation:

\begin{itemize}
\item $\Tr$ denotes the trace relative to the field extension $\F_q/\F_2$;
\item for any positive integer $n$, $D_n(X)$ is the Dickson polynomial of the first kind of degree $n$ with parameter $1$, which is determined by the functional equation $D_n(X+X^{-1})=X^n+X^{-n}$ \cite{ACZ,LMT}.
\end{itemize}

We first describe the number of roots in certain difficult cases.

\begin{thm} \label{main}
Let $\ell$ and $m$ be positive integers with $3\nmid\ell$, and write $q:=2^m$ and $n:=\lfloor(q+1)/3\rfloor$. 
Pick any $a\in\F_q\setminus\F_2$ such that $\Tr(1/a)=\Tr(1)$, and let $N_\ell$ be the number of roots in\/ $\F_{q^3}$ of $X^{2q^\ell+1}+X+a$. Then
\begin{enumerate}
\item if $\ell\not\equiv -m\pmod 3$ then $N_\ell=3$;
\item if $\ell\equiv -m\pmod 3$ then $N_\ell\in\{0,9\}$, where $N_\ell=9$ if and only if $D_n(a)=0$.
\end{enumerate}
\end{thm}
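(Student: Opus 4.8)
The plan is to reduce the whole question to an equation on $\F_{q^3}$ that depends on $\ell$ only through $r:=\ell\bmod 3\in\{1,2\}$, to peel off the roots lying in $\F_q$ by solving a cubic, and then to analyze the remaining roots via a degree‑two rational map together with the substitution $a=\zeta+\zeta^{-1}$, $x=\eta+\eta^{-1}$ that produces the Dickson polynomials. For the reduction: if $x\in\F_{q^3}$ then $x^{q^3}=x$, so $x^{q^\ell}=x^{q^r}$ and
\[
X^{2q^\ell+1}+X+a\big|_{X=x}=x(x^{q^r}+1)^2+a .
\]
Hence $N_\ell$ depends only on $r$, and the roots of $f$ in $\F_{q^3}$ are exactly the $x$ with $x(x^{q^r}+1)^2=a$. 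I would immediately separate the roots in $\F_q$: for $x\in\F_q$ the equation collapses to $x^3+x+a=0$, so the $\F_q$‑roots of $f$ are precisely the $\F_q$‑roots of $X^3+X+a$, and one checks that any root of $f$ in $\F_{q^3}$ lying in $\F_q$ must arise this way.

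Next I would write $a=\zeta+\zeta^{-1}$ with $\zeta\in\overline{\F_q}$. The hypothesis $\Tr(1/a)=\Tr(1)$ is exactly the condition placing $\zeta\in\F_q^{\times}$ when $m$ is even and in the norm‑one group $\mu_{q+1}\subset\F_{q^2}^{\times}$ when $m$ is odd. Substituting $x=\eta+\eta^{-1}$ turns the cubic into $\eta^3+\eta^{-3}=\zeta+\zeta^{-1}$, so its roots are $x=\eta+\eta^{-1}$ with $\eta^3=\zeta$. Tracking which of the three cube roots yield $x\in\F_q$, and using $\gcd(q-1,q+1)=1$ to rule out mixed behaviour, shows that $X^3+X+a$ has either $0$ or $3$ roots in $\F_q$, with $3$ roots precisely when $\zeta$ is a cube in the relevant cyclic group. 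Since $D_n(a)=\zeta^n+\zeta^{-n}$ and $n=\lfloor(q+1)/3\rfloor$ (so $n=(q-1)/3$ or $(q+1)/3$ according to the parity of $m$), a short order computation identifies this cube condition with $D_n(a)=0$. Thus the number of $\F_q$‑roots of $f$ is $3$ if $D_n(a)=0$ and $0$ otherwise.

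For the roots in $\F_{q^3}\setminus\F_q$ I would use the rational map $\psi(X)=a/(X+1)^2$ over $\F_q$. For $x\in\F_{q^3}$ the equation $x(x^{q^r}+1)^2=a$ is equivalent, after applying a suitable power of the Frobenius $\sigma\colon x\mapsto x^q$, to $\psi(x)=x^{q^s}$ with $s\equiv-r\pmod 3$, which in turn forces $\psi^{(3)}(x)=x$. Now $\psi(X)=a/(X^2+1)$ is a rational function of $X^2$, hence purely inseparable of degree $2$; consequently $\psi^{(3)}$ equals a Möbius transformation composed with $X\mapsto X^{8}$, so $\psi^{(3)}(X)=X$ has exactly $9$ solutions in $\bP^1(\overline{\F_q})$. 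These are the three fixed points of $\psi$, i.e.\ the roots of $X^3+X+a$ from the previous paragraph, together with two $3$‑cycles of $\psi$. A genuinely cubic root of $f$ corresponds to a point of a $3$‑cycle that is defined over $\F_{q^3}$ and at which $\sigma$ acts on the cycle as the prescribed iterate $\psi^{(j)}$ with $jr\equiv 2\pmod 3$. Equivalently, substituting $x=\eta+\eta^{-1}$ once more reduces $x(x^{q^r}+1)^2=a$ to an explicit multiplicative equation relating $\eta$, $\eta^{q^r}$, $\zeta$, and a primitive cube root of unity, whose solvability is governed by a congruence modulo $3$ between $q=2^m$ and the exponent coming from $r$; this congruence is precisely $\ell\equiv-m\pmod 3$. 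Combining the $\F_q$‑count $3\,[D_n(a)=0]$ with the $3$‑cycle contribution then yields $N_\ell=3$ in the generic case and $N_\ell\in\{0,9\}$ (with $9$ iff $D_n(a)=0$) in the special case.

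The main obstacle is this last step: pinning down the Frobenius action on the two $3$‑cycles. The delicate points are (i) deciding, for each cycle, whether $\sigma$ preserves it (acting as $\psi^{(1)}$ or $\psi^{(2)}$, hence contributing $3$ or $0$) or swaps the two cycles (placing their points in $\F_{q^6}$ and contributing $0$); (ii) showing that the two cycles are \emph{linked}—both good or both bad, according to $D_n(a)=0$—exactly when $\ell\equiv-m\pmod 3$, whereas otherwise the bookkeeping forces the total to be $3$ for every $a$; and (iii) carrying this out uniformly across the parities of $m$, which switch $\zeta$ between $\F_q^{\times}$ and $\mu_{q+1}$ and switch $n$ between $(q-1)/3$ and $(q+1)/3$. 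Controlling the degenerate possibilities—$x\in\F_2$, $\eta$ of small multiplicative order, or coincidences among the nine periodic points—will also require care, and it is here that the hypotheses $a\notin\F_2$ and $3\nmid\ell$ enter.
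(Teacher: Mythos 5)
Your setup coincides with the paper's: the rational map $\psi(X)=a/(X^2+1)$ is exactly the paper's $\rho(X)$, the equivalence ``$x\in\F_{q^3}$ is a root iff $\psi(x)=x^{q^s}$ with $s\equiv-\ell\pmod 3$, hence $\psi^{(3)}(x)=x$'' is the paper's Lemma~\ref{twistdeg3}, and the nine periodic points splitting into three fixed points plus two $3$-cycles is the paper's factorization $f=f_0f_1f_2$. Your treatment of the $\F_q$-roots via $a=\zeta+\zeta^{-1}$, $x=\eta+\eta^{-1}$, $\eta^3=\zeta$ is also sound and matches Lemmas~\ref{f0roots} and~\ref{Dickson}. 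The problem is that your argument stops exactly where the theorem's content begins: you state, but do not prove, that the Frobenius action on the two $3$-cycles contributes $3$ roots in the generic case and is ``linked'' to the condition $D_n(a)=0$ precisely when $\ell\equiv-m\pmod 3$. You name this as ``the main obstacle'' and list the delicate points (i)--(iii) without resolving any of them, so as written this is a gap, not a proof: nothing in your sketch rules out, say, both cycles being defined over $\F_{q^3}$ with the right Frobenius twist simultaneously when $\ell\not\equiv-m\pmod 3$, which would give $N_\ell\in\{6,9\}$ rather than $3$.

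For comparison, the paper closes this gap without ever directly computing the Frobenius action on the cycles (that computation is deferred to Lemma~\ref{extra}, which is only needed for the finer Theorem~\ref{roots}). Instead it (a) writes down the two cycles as explicit cubics $f_1,f_2\in\F_q[X]$ using the parameter $b$ with $b^2+b+1=1/a$, (b) proves via Williams' criterion that $f_i$ splits over $\F_q$ or is irreducible according as $\omega^ic$ is or is not a cube in $\F_{q^2}$, where $c=(b+\omega)/(b+\omega^2)$ --- so that whether $3\mid m$ (i.e.\ whether $\omega$ is a cube in $\F_{q^2}$) decides whether the three $f_i$ behave identically or exactly one splits --- and (c) combines this with the a priori constraint of Lemma~\ref{trinomial} that $N_k\in\{0,1,2,9\}$ or $\{0,1,3\}$ to force the count by pure bookkeeping (e.g.\ if both $f_1,f_2$ are irreducible they divide $H_1H_2$, so $N_1+N_2\ge 6$ forces $N_1=N_2=3$). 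Some such quantitative input --- either the explicit irreducibility criteria plus the counting constraint, or a direct determination of which iterate of $\psi$ the Frobenius induces on each cycle --- is indispensable, and supplying it would require you to do essentially all of the computations in Sections~\ref{sec:reduce} and the proof of Theorem~\ref{maincubic}.
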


\begin{comment}
Magma verification in small cases:

for m in [1..10] do q:=2^m; n:=Floor((q+1)/3); K:=GF(q^3); _<x>:=PolynomialRing(K);
for a in GF(q) do if a^2 ne a and &+[(1/a+1)^(2^i):i in [0..m-1]] eq 0 then for l in [1,2] do N:=#Roots(x^(2*q^l+1)+x+a); lm3:=IsDivisibleBy(l+m,3);
if not ( ((not lm3) and N eq 3) or (lm3 and ((N eq 9 and Evaluate(DicksonFirst(n,1),a) eq 0) or (N eq 0 and Evaluate(DicksonFirst(n,1),a) ne 0) )) )
then "wrong"; break m; end if; end for; end if; end for; "tested",m; end for;

\end{comment}

The next result (which is easy) provides further information about roots of the Dickson polynomials occurring in Theorem~\ref{main}.

\begin{prop} \label{supplement}
Let $m$ be a positive integer with $3\nmid m$, and write $q:=2^m$ and $n:=\lfloor(q+1)/3\rfloor$. Then
\begin{enumerate}
\item the roots of $D_n(X)$ in\/ $\F_q\setminus\F_2$ are the elements $\zeta+\zeta^{-1}$ where $\zeta\in\F_{q^2}$ and $\zeta^n=1$ but $\zeta\ne 1$;
\item $D_n(X)$ has exactly $\lfloor q/6\rfloor$ roots in\/ $\mybar\F_q\setminus\F_2$, all of which are in\/ $\F_q\setminus\F_2$.
\end{enumerate}
\end{prop}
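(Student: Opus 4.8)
The plan is to exploit the defining functional equation $D_n(\zeta+\zeta^{-1})=\zeta^n+\zeta^{-n}$ together with the arithmetic of $n$. Writing an arbitrary element $x\in\mybar\F_q$ as $x=\zeta+\zeta^{-1}$, where $\zeta,\zeta^{-1}$ are the two roots of $T^2+xT+1$, the functional equation gives $D_n(x)=\zeta^n+\zeta^{-n}$, which vanishes exactly when $\zeta^{2n}=1$. Since $\charp\F_q=2$ the squaring map is injective, so $\zeta^{2n}=1$ is equivalent to $\zeta^n=1$. Hence $x\in\mybar\F_q$ is a root of $D_n$ if and only if $x=\zeta+\zeta^{-1}$ for some $n$-th root of unity $\zeta$.

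Next I would record the elementary facts forced by $3\nmid m$. As $q=2^m$ and $2\equiv-1\pmod 3$, we have $q\equiv 1\pmod 3$ for $m$ even and $q\equiv-1\pmod 3$ for $m$ odd, so $3n$ equals $q-1$ or $q+1$ respectively. In both cases $n\mid q^2-1$ (so every $n$-th root of unity lies in $\F_{q^2}$) and $n$ is odd (since $q\pm1$ is odd, so there are exactly $n$ distinct $n$-th roots of unity). Moreover $2$ has order $6$ modulo $9$, so $3\nmid m$ rules out both $9\mid q-1$ and $9\mid q+1$, whence $3\nmid n$; this means no $n$-th root of unity is a primitive cube root of unity, which is precisely what keeps $\zeta+\zeta^{-1}$ away from $\F_2$.

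For part (1), given an $n$-th root of unity $\zeta\ne 1$ in $\F_{q^2}$ I would verify that $x:=\zeta+\zeta^{-1}$ lies in $\F_q\setminus\F_2$. If $3n=q-1$ then $n\mid q-1$, so $\zeta\in\F_q$ and $x\in\F_q$; if $3n=q+1$ then $\zeta^{q}=\zeta^{-1}$, so $x^q=\zeta^q+\zeta^{-q}=x$ and again $x\in\F_q$. Further $x\ne 0$ since $\zeta+\zeta^{-1}=0$ forces $\zeta=1$, and $x\ne 1$ since $\zeta+\zeta^{-1}=1$ forces $\zeta^3=1$ with $\zeta\ne 1$, which is impossible as $3\nmid n$. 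Conversely, any root of $D_n$ in $\F_q\setminus\F_2$ has the form $\zeta+\zeta^{-1}$ with $\zeta^n=1$ by the first paragraph, and $\zeta\ne 1$ because $x\ne 0$; this yields the stated description.

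For part (2) I would count using that the fibers of $\zeta\mapsto\zeta+\zeta^{-1}$ are the pairs $\{\zeta,\zeta^{-1}\}$. On the $n-1$ elements $\zeta$ with $\zeta^n=1$ and $\zeta\ne 1$ no fiber degenerates, since $\zeta=\zeta^{-1}$ forces $\zeta=1$ in characteristic $2$; hence these give exactly $(n-1)/2$ distinct roots, all in $\F_q\setminus\F_2$ by part (1), and a short check of the two parities shows $(n-1)/2=\lfloor q/6\rfloor$. Finally there are no other roots in $\mybar\F_q\setminus\F_2$: every root of $D_n$ in $\mybar\F_q$ is $\zeta+\zeta^{-1}$ with $\zeta^n=1$, hence with $\zeta\in\F_{q^2}$ and $\zeta+\zeta^{-1}\in\F_q$, and among these the only one lying in $\F_2$ is $0$ (from $\zeta=1$), as $1$ is not a root. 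Thus all roots in $\mybar\F_q\setminus\F_2$ already lie in $\F_q\setminus\F_2$ and number $\lfloor q/6\rfloor$. The computations are routine; the only step requiring care is the uniform treatment of the two residue classes of $q$ modulo $3$, where the implication $\zeta^n=1\Rightarrow\zeta+\zeta^{-1}\in\F_q$ uses $\zeta\in\F_q$ in one case and the Frobenius relation $\zeta^q=\zeta^{-1}$ in the other, together with the verification that $3\nmid n$ via the order of $2$ modulo $9$.
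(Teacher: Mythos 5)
Your argument is correct and follows essentially the same route as the paper's proof: characterize the roots of $D_n$ as the elements $\zeta+\zeta^{-1}$ with $\zeta^n=1$ via the functional equation, use $3n=q\mp 1$ to place $\zeta+\zeta^{-1}$ in $\F_q$ in both parity cases, exclude $\F_2$ via $3\nmid n$, and count $(n-1)/2=\lfloor q/6\rfloor$. The only difference is that you spell out why $3\nmid m$ forces $3\nmid n$ (via the order of $2$ modulo $9$), which the paper leaves implicit.
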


\begin{comment}
Magma verification in small cases:

for m in [1..20] do if GCD(m,3) eq 1 then q:=2^m; n:=Floor((q+1)/3); 
K:=GF(q^2);
R:=Roots(DicksonFirst(n,K!1));
if &+[i[2]:i in R] ne n then "wrong1"; end if;
S:={i[1]:i in R} diff {K!0,1};
if #S ne Floor(q/6) then "wrong2"; end if;
d:=GCD(n,q^2-1); z:=PrimitiveElement(K)^((#K-1) div d);
if S ne {z^i+1/z^i:i in [1..d-1]} then "wrong3"; end if;
"tested",m; end if; end for;

\end{comment}

We now exhibit the roots of the polynomials in Theorem~\ref{main}.  

\begin{thm} \label{roots}
With notation as in Theorem~\emph{\ref{main}}, let $\Gamma_\ell$ be the set of roots in\/ $\F_{q^3}$ of $X^{2q^\ell+1}+X+a$, and pick elements $\omega\in\F_4\setminus\F_2$ and $b\in\F_q$ such that $b^2+b=a^{-1}+1$. 
Then $b\notin\F_4$. Write $c:=(b+\omega)/(b+\omega^2)$, and define
\begin{align*}
\Lambda_0&:=\{v+v^{-1}\colon v\in\F_{q^6}\text{ with } v^3=c\}, \\
\Lambda_1&:=\Bigl\{\frac{a^{-1}+v+v^{-1}}{b^2}\colon v\in\F_{q^6}\text{ with } v^3=\omega c\Bigr\}, \\
\Lambda_2&:=\Bigl\{\frac{a^{-1}+v+v^{-1}}{b^2+1}\colon v\in\F_{q^6}\text{ with } v^3=\omega^2 c\Bigr\},
\end{align*}
and $\Lambda:=\Lambda_0\cup\Lambda_1\cup\Lambda_2$. Then the $\Lambda_i$ are pairwise disjoint sets of size $3$.
\begin{enumerate}
\item If $m\equiv 0\pmod 3$ and $c$ is a cube in\/ $\F_{q^2}$ then $\Gamma_\ell=\Lambda_0$.
\item If $m\equiv 0\pmod 3$ and $c$ is not a cube in\/ $\F_{q^2}$ then $\Gamma_\ell=\Lambda_k$ for the unique $k\in\{1,2\}$ such that $c^{(q^2-1)/3}=\omega^{-k\ell}$.
\item If $m\not\equiv 0\pmod 3$ then there is a unique $k\in\{0,1,2\}$ for which $\omega^k c$ is a cube in\/ $\F_{q^2}$; this value $k$ satisfies the following:
\begin{itemize}
\item if $k=0$ and $\ell\equiv m\pmod 3$ then $\Gamma_\ell=\Lambda_0$;
\item if $k\ne 0$ and $\ell\equiv m\pmod 3$ then $\Gamma_\ell=\Lambda_{3-k}$;
\item if $k=0$ and $\ell\equiv -m\pmod 3$ then $\Gamma_\ell=\Lambda$;
\item if $k\ne 0$ and $\ell\equiv -m\pmod 3$ then $\Gamma_\ell=\emptyset$.
\end{itemize}
\end{enumerate}
\end{thm}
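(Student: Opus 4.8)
The plan is to reduce everything modulo $3$ and then lean on Theorem~\ref{main}. Since the $q$-power Frobenius has order $3$ on $\F_{q^3}$, every $x\in\F_{q^3}$ satisfies $x^{q^\ell}=x^{q^r}$ with $r:=\ell\bmod 3\in\{1,2\}$, so $\Gamma_\ell$ is the set of $x\in\F_{q^3}$ with $x^{2q^r+1}+x+a=0$, and in particular it depends only on $r$. Because Theorem~\ref{main} already evaluates $\abs{\Gamma_\ell}=N_\ell\in\{0,3,9\}$, it suffices to show that each set named on the right-hand side consists of genuine roots lying in $\F_{q^3}$; equality then follows by comparing cardinalities, these being finite sets. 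I would first dispatch the preliminaries. The element $b$ exists because the hypothesis $\Tr(1/a)=\Tr(1)$ is precisely the solvability condition for $b^2+b=a^{-1}+1$, and $b\notin\F_4$ since otherwise $b^2+b\in\{0,1\}$ would force $a=1$ or $a^{-1}=0$, both impossible. A direct computation gives $(b+\omega)(b+\omega^2)=b^2+b+1=a^{-1}$, whence $c+c^{-1}=a$ together with the companion identities $\omega c+\omega^2c^{-1}=ab^2$ and $\omega^2c+\omega c^{-1}=a(b^2+1)$. Finally $\abs{\Lambda_j}=3$ and pairwise disjointness reduce to checking $c\neq 1$ and that the three cubics attached to the $\Lambda_j$ share no root, using $b\notin\F_4$ and $a\notin\F_2$.

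The heart of the argument is a local computation deciding, branch by branch, which elements of $\Lambda_j$ are roots. Writing $x=L_j^{-1}(v+v^{-1})$ with $L_0=\mathrm{id}$, $L_1(x)=b^2x+a^{-1}$, $L_2(x)=(b^2+1)x+a^{-1}$, and $v^3=\omega^j c$, the Dickson identities $D_3(X)=X^3+X$ and $D_3(v+v^{-1})=v^3+v^{-3}$ turn the membership $x\in\Lambda_j$ into the cubic relation $D_3(L_j(x))=\omega^jc+\omega^{-j}c^{-1}$, which by the preliminary identities is an explicit cubic satisfied by $x$. For $\Lambda_0$, substituting $a=x^3+x$ gives $x^{2q^r+1}+x+a=x^3(x^{q^r-1}+1)^2$, which vanishes exactly when $x^{q^r}=x$; the twisted branches $\Lambda_1,\Lambda_2$ admit analogous reductions, again leaving a pure Frobenius condition. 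I would track this condition through the cube root $v$ via $v^{q^2}=\xi v$, where $\xi$ is the cubic residue symbol of $\omega^j c$, essentially $(\omega^j c)^{(q^2-1)/3}$. The governing fact is that $\omega$ is a cube in $\F_{q^2}$ if and only if $3\mid m$: this yields the trichotomy that all of, none of, or exactly one of $c,\omega c,\omega^2 c$ is a cube, matching cases~(1), (2), (3) of the theorem and pinning down the index $k$.

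It then remains to decide, in each branch, when $x$ genuinely lies in $\F_{q^3}$. Here I would compute $x^{q^3}$ from $v^{q^2}=\xi v$ together with the behaviour of $\omega$ and $c$ under the $q$-power map (namely $c^q=c$ for $m$ even and $c^q=c^{-1}$ for $m$ odd), and test $x^{q^3}=x$. In the cube case one checks $\Lambda_0\subseteq\F_q\subseteq\F_{q^3}$, so its elements are roots for every $r$; the branches $\Lambda_1,\Lambda_2$, where $v\notin\F_{q^2}$, are subtler, and their rationality and root conditions are exactly what introduce the dependence on $\ell\bmod 3$, i.e.\ on $r$ relative to $m$. Comparing with $N_\ell$ from Theorem~\ref{main} then closes every nonempty case. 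For the empty case ($k\neq 0$ and $\ell\equiv -m$) I would instead invoke the connecting lemma $D_n(a)=0\iff k=0$: by Proposition~\ref{supplement}, $D_n(a)=0$ is equivalent to $c^n=1$ with $c\neq 1$, and a short order computation (using $c^q=c^{-1}$ for $m$ odd and $c\in\F_q$ for $m$ even) shows $c^n=1\iff c^{(q^2-1)/3}=1\iff c$ is a cube in $\F_{q^2}$ when $3\nmid m$. Hence $k\neq 0$ forces $D_n(a)\neq 0$, so $N_\ell=0$ by Theorem~\ref{main}, giving $\Gamma_\ell=\emptyset$.

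The main obstacle I anticipate is the uniform rationality bookkeeping for the twisted branches: determining exactly when $x^{q^3}=x$ requires controlling simultaneously the cubic residue symbol $\xi$, the parity of $m$ (which fixes the $q$-power action on $\omega$ and $c$), and the residue $\ell\bmod 3$, and then matching these against the prescribed value $c^{(q^2-1)/3}=\omega^{-k\ell}$. Organizing this so that the many subcases collapse to the clean statement of the theorem, rather than a sprawling table, is the delicate part; the reductions above — working with $r$, factoring the reduced equation, and routing the count through Theorem~\ref{main} — are designed precisely to contain that combinatorial explosion.
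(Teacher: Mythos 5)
Your overall plan --- parametrize the candidate root sets by cube roots of $\omega^j c$, decide membership in $\Gamma_\ell$ by a Frobenius computation on $v$, and close each case against the count $N_\ell$ from Theorem~\ref{main} --- is close in spirit to the paper's, which identifies $\Lambda_0,\Lambda_1,\Lambda_2$ with the root sets of three explicit cubics $f_0,f_1,f_2$ whose product is the numerator of $\rho(\rho(\rho(X)))+X$ for $\rho(X)=a/(X^2+1)$, and then determines which cubic divides $H_\ell$. Your preliminary steps ($b\notin\F_4$; $c+c^{-1}=a$; $\omega c+\omega^2c^{-1}=ab^2$; $\omega^2c+\omega c^{-1}=a(b^2+1)$; $\abs{\Lambda_j}=3$ via $c\ne 1$) are all correct, as are your treatment of $\Lambda_0$ (substituting $a=x^3+x$ to get $H_\ell(x)=x^3(x^{q^r-1}+1)^2$ for $x\in\F_{q^3}$) and of the empty case via $D_n(a)=0\iff c^n=1\iff c$ is a cube.

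The gap is that the decisive computation for the twisted branches is asserted rather than performed, and it cannot be bypassed. For $x=(a^{-1}+v+v^{-1})/b^2\in\Lambda_1$ the reduction is not ``analogous'' to the clean factorization available for $\Lambda_0$: after writing $v^{q^\ell}$ as a root of unity times $v^{\pm1}$ and expanding $x^{2q^\ell+1}+x+a$, one obtains a rational identity in $v$ and $b$ that holds for exactly one residue of the relevant exponent mod $3$, and in the complementary case one must verify that the discrepancy --- which in the paper's Lemma~\ref{extra} comes out to $(v+1)^4(v+\omega)^2(v+\omega^2)^6/\bigl(v^3(v^3+\omega)^2\bigr)$ --- is nonzero. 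Moreover, your fallback of comparing cardinalities with $N_\ell$ cannot finish case (2), nor the subcase $k\ne 0$, $\ell\equiv m\pmod 3$ of case (3): there $N_\ell=3$ but both $\Lambda_1$ and $\Lambda_2$ have size $3$, so only the explicit computation can decide which one equals $\Gamma_\ell$, and it must also produce exactly the stated criterion $c^{(q^2-1)/3}=\omega^{-k\ell}$. The paper sidesteps most of this with a device absent from your outline: $\rho$ permutes the roots of each $f_i$ as a $3$-cycle, hence agrees with $x\mapsto x^q$ or $x\mapsto x^{1/q}$ on the roots of an irreducible $f_i$, so such an $f_i$ divides exactly one of $H_1,H_2$ (Proposition~\ref{fi}); combined with the counts from Corollary~\ref{Ni} this settles every case except $3\mid m$ with $c$ a non-cube, which is the only place the paper must carry out the hand computation you propose to do in every branch. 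Until that computation is actually executed in at least one twisted branch, the proof is incomplete.
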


\begin{comment}
Magma verification in small cases:

for m in [1..10] do q:=2^m; n:=Floor((q+1)/3); K:=GF(q^3); _<x>:=PolynomialRing(K); K2:=GF(q^6); _<y>:=PolynomialRing(K2); w:=Roots(y^2+y+1)[1,1];
for a in GF(q) do if a^2 ne a and &+[(1/a+1)^(2^i):i in [0..m-1]] eq 0 then b:=Roots(x^2+x+1/a+1)[1,1]; 
if b^4 eq b then "wrong b"; break m; end if; c:=(b+w)/(b+w^2);
L0:={i[1]+1/i[1]:i in Roots(y^3+c)};
L1:={(1/a+i[1]+1/i[1])/b^2:i in Roots(y^3+w*c)};
L2:={(1/a+i[1]+1/i[1])/(b^2+1):i in Roots(y^3+w^2*c)};
if not ({#L0,#L1,#L2} eq {3} and #(L0 join L1 join L2) eq 9) then "wrong Lambda"; break m; end if;
m3:=m mod 3;
for l in [1,2] do R:={i[1]:i in Roots(x^(2*q^l+1)+x+a)};
if not IsDivisibleBy(m,3) then ks:={k:k in {0,1,2}|IsPower(GF(q^2)!(w^k*c),3)};
if #ks ne 1 then "wrong ks"; break m; end if;
k:=Random(ks);
if not ( (k ne 0 and l eq 3-m3 and #R eq 0) or (k ne 0 and l eq m3 and ( (k eq 1 and R eq L2) or (k eq 2 and R eq L1) )) or (k eq 0 and l eq 3-m3 and R eq L0 join L1 join L2) or (k eq 0 and l eq m3 and R eq L0) ) then "wrongm1"; break m; end if;
elif IsPower(GF(q^2)!c,3) then if R ne L0 then "wrongm2"; break m; end if;
else ks:={k:k in {1,2}|c^((q^2-1) div 3) eq 1/w^(k*l)}; if #ks ne 1 then "wrongm3ks"; break m; end if;
k:=Random(ks);
if not ( (k eq 1 and R eq L1) or (k eq 2 and R eq L2) ) then "wrongm3"; break m; end if;
end if; end for; end if; end for; "tested",m; end for;

\end{comment}

For completeness, we also determine the roots in $\F_{q^3}$ of $X^{2q^\ell}+X+a$ for all elements $a\in\F_q$ not addressed in Theorem~\ref{main}, and also for all integers $\ell$ which are divisible by $3$. 
These remaining cases are much easier than the above results. See Remark~\ref{a=0} (for $a=0$), Proposition~\ref{main2} (for $\Tr(1/a)\ne\Tr(1)$), Corollary~\ref{3} (for $3\mid\ell$), and Corollary~\ref{a=1} (for $a=1$).

We deduce the following consequences of our results for roots of certain related polynomials in $\F_{q^3}$ and in the set $\mu_{q^2+q+1}$ of all $(q^2+q+1)$-th roots of unity in $\F_{q^3}^*$.

\begin{cor} \label{zheng}
Let $q=2^m$ where $m$ is a positive integer with $m\not\equiv 1\pmod 3$, and pick any $h,e\in \F_q\setminus\F_2$ with $h^3=e^2+e+1$. 
Then the polynomial $X^{2q^2+1} + hX + e$ has exactly three roots in\/ $\F_{q^3}$.  
\end{cor}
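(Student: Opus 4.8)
The plan is to reduce this to Theorem~\ref{main} by rescaling the variable so that $X^{2q^2+1}+hX+e$ acquires the normalized shape $Y^{2q^2+1}+Y+a$ treated there, with $\ell=2$. Since $3\nmid 2$, Theorem~\ref{main} will apply once I produce a suitable $a\in\F_q\setminus\F_2$ and count roots of the normalized polynomial.

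First I would choose the scaling factor. Because $x\mapsto x^2$ is a bijection of $\F_q$, there is a unique $\lambda\in\F_q$ with $\lambda^2=h$; since $\lambda\in\F_q$ we have $\lambda^{q^2}=\lambda$, hence $\lambda^{2q^2}=\lambda^2=h$ and $\lambda^{2q^2+1}=h\lambda=\lambda^3$. Substituting $X=\lambda Y$ then gives
\[
X^{2q^2+1}+hX+e=\lambda^3\bigl(Y^{2q^2+1}+Y+a\bigr),\qquad a:=e/\lambda^3 .
\]
As $\lambda\in\F_q\subseteq\F_{q^3}$, the map $X\mapsto X/\lambda$ is a bijection of $\F_{q^3}$, so the two polynomials have the same number of roots in $\F_{q^3}$, and it suffices to count the roots of $Y^{2q^2+1}+Y+a$.

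Next I would verify the hypotheses of Theorem~\ref{main} for this $a$. One checks $a\ne 0$ (as $e\ne 0$) and $a\ne 1$ (since $a=1$ would force $e^2=\lambda^6=h^3=e^2+e+1$, i.e.\ $e=1$, which is excluded), so $a\in\F_q\setminus\F_2$. The key point is the trace condition: from $\lambda^6=h^3=e^2+e+1$ I get $a^{-2}=\lambda^6/e^2=1+e^{-1}+e^{-2}$. Using that $\Tr$ is invariant under $x\mapsto x^2$, I would compute $\Tr(1/a)=\Tr(1/a^2)=\Tr(1)+\Tr(1/e)+\Tr(1/e^2)=\Tr(1)$, because $\Tr(1/e^2)=\Tr(1/e)$ cancels in characteristic $2$. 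Thus $\Tr(1/a)=\Tr(1)$, as required.

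Finally I would apply Theorem~\ref{main} with $\ell=2$. The dichotomy there turns on whether $\ell\equiv -m\pmod 3$; with $\ell=2$ this says $m\equiv 1\pmod 3$, which is excluded by hypothesis. Hence we are always in case~(1), giving $N_2=3$, and therefore $X^{2q^2+1}+hX+e$ has exactly three roots in $\F_{q^3}$. The only real obstacle is the trace verification; the substitution and the case analysis are routine, and the pleasant feature is that the hypothesis $h^3=e^2+e+1$ is exactly what forces $\Tr(1/a)=\Tr(1)$, while $m\not\equiv 1\pmod 3$ is exactly what keeps us out of the ambiguous case~(2).
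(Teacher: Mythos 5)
Your proof is correct and follows essentially the same route as the paper: the paper's Lemma~\ref{scale} performs the identical rescaling by $u=\sqrt{h}$ to reduce $G_2(X)=X^{2q^2+1}+hX+e$ to $H_2(X)=X^{2q^2+1}+X+a$ with $a=e/h^{3/2}$, and Proposition~\ref{zhengprop} then invokes Theorem~\ref{main} exactly as you do. The only cosmetic difference is the trace check, where the paper writes $1/a=b^2+b+1$ with $b=1/\sqrt{e}$ rather than computing $\Tr(1/a^2)$ directly; the two verifications are equivalent.
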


\begin{comment}
Magma verification in small cases:

for m in [1..11] do if m mod 3 ne 1 then q:=2^m; 
_<t>:=PolynomialRing(GF(q));
_<x>:=PolynomialRing(GF(q^3));
for e in GF(q) do if e^2 ne e then for hh in Roots(t^3+e^2+e+1) do h:=hh[1]; if h^2 ne h and #Roots(x^(2*q^2+1)+h*x+e) ne 3 then "wrong",m,h,e; break m; end if; end for; end if; end for;
"tested",m; end if; end for;

\end{comment}

\begin{cor} \label{zheng2}
Let $q=2^m$ where $m$ is a positive integer, and pick any $\omega\in\F_4\setminus\F_2$ and any $h,e\in \F_q\setminus\F_2$ with $h^3=e^2+e+1$. 
Then, for any $\ell\in\{1,2\}$, the polynomial $X^{2q^\ell+1} + hX + e$ has roots in $\mu_{q^2+q+1}$ if and only if $(e+\omega)^{(q^2-1)/3}=\omega^{-\ell}$, in which case the roots in $\mu_{q^2+q+1}$ are the three roots of $X^3+h^2 X^2 + (e+1)hX+1$.
\end{cor}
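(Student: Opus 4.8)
The plan is to reduce to the polynomials of Theorem~\ref{roots} by an $\F_q$-rational scaling, to isolate the roots in $\mu_{q^2+q+1}$ by a norm condition, and to pin down the explicit cubic by computing symmetric functions of a Frobenius orbit. Write $s:=h^{3/2}=\sqrt{e^2+e+1}\in\F_q^*$ and $\lambda:=\sqrt h\in\F_q^*$, and substitute $X=\lambda Y$. Since $h\in\F_q$ we have $\lambda^{2q^\ell}=h$, so $x$ is a root of $X^{2q^\ell+1}+hX+e$ iff $y:=x/\lambda$ is a root of $Y^{2q^\ell+1}+Y+a$ with $a:=e/s$. One checks $a\in\F_q\setminus\F_2$ and, using $\Tr(t)=\Tr(t^2)$, that $\Tr(1/a)=\Tr((e^2+e+1)/e^2)=\Tr(1)$, so Theorems~\ref{main} and~\ref{roots} apply to $Y^{2q^\ell+1}+Y+a$. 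As $\lambda\in\F_q$ gives $N_{\F_{q^3}/\F_q}(\lambda)=\lambda^3=s$, we have $x=\lambda y\in\mu_{q^2+q+1}$ iff $N_{\F_{q^3}/\F_q}(y)=s^{-1}$. Finally, with $b,c$ as in Theorem~\ref{roots}, the identity $(b+\omega)(b+\omega^2)=b^2+b+1=a^{-1}$ yields $c+c^{-1}=a$, whence $sc$ is a root of $T^2+eT+(e^2+e+1)$; thus $sc\in\{\omega(e+\omega^2),\,\omega^2(e+\omega)\}$, the bridge between $c$ and $e+\omega$.

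Next I would show that every root of $X^{2q^\ell+1}+hX+e$ in $\mu_{q^2+q+1}$ is a root of $C(X):=X^3+h^2X^2+(e+1)hX+1$. Let $x$ be such a root and set $\alpha:=x,\ \beta:=x^q,\ \gamma:=x^{q^2}$, so $\alpha\beta\gamma=1$ and $x^{-1}=\beta\gamma$. For $\ell=1$ the relation $x^{2q+1}=hx+e$ becomes, after dividing by $x$, the cyclic system $\alpha^2=h+e\alpha\beta$, $\beta^2=h+e\beta\gamma$, $\gamma^2=h+e\gamma\alpha$ (for $\ell=2$ one gets the same system with $\beta,\gamma$ interchanged, hence the same symmetric consequences). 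Writing $e_1,e_2,e_3$ for the elementary symmetric functions of $\alpha,\beta,\gamma$, summing the three equations gives $e_1^2=h+e\,e_2$; summing after dividing by $\alpha,\beta,\gamma$ gives $e_1(e+1)=h\,e_2$; and multiplying the three, with $e_3=1$, gives $e_2=h+e_1e/h$. These three identities force $e_1=h^2$ and $e_2=(e+1)h$, so the minimal polynomial of $x$ over $\F_q$ is $C(X)$. Since $e\neq0$ rules out $\F_q$-roots of $C$ lying in $\mu_3$, the roots of $C$ lie in $\mu_{q^2+q+1}$ (forming one Frobenius orbit) when $C$ is irreducible, and $C$ has no root in $\mu_{q^2+q+1}$ when it is reducible.

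For the converse and the existence criterion, the above shows that roots in $\mu_{q^2+q+1}$ exist iff $C$ is irreducible over $\F_q$, in which case they are exactly the three roots of $C$. To see which $\ell$ they occur for, I would check directly that $x\mapsto(x^2+h)/e$ carries the roots of $C$ to the roots of $T^3+(e+1)hT^2+h^2T+1$, namely to $\{x^{1+q},\,x^{1+q^2},\,x^{q+q^2}\}$; a root $x$ of $C$ then satisfies $X^{2q^\ell+1}+hX+e$ precisely when $(x^2+h)/e$ equals $x^{1+q}$ ($\ell\equiv1$), $x^{1+q^2}$ ($\ell\equiv2$), or $x^{q+q^2}=x^{-1}$ ($\ell\equiv0$). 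For irreducible $C$ the last case is impossible (it would force $C=X^3+hX+e$), so roots occur for exactly one of $\ell\equiv1,2\pmod3$. It remains to express both the irreducibility of $C$ and this dichotomy through $(e+\omega)^{(q^2-1)/3}$: feeding $sc\in\{\omega(e+\omega^2),\omega^2(e+\omega)\}$ into Theorem~\ref{roots}, the alternatives there (whether $\omega^kc$ is a cube in $\F_{q^2}$, and the value of $c^{(q^2-1)/3}$), combined with $\omega^{(q^2-1)/3}=\omega^m$ and the cubic symbol of $s\in\F_q^*$ in $\F_{q^2}$, should collapse to the single condition $(e+\omega)^{(q^2-1)/3}=\omega^{-\ell}$.

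The main obstacle is this last matching step. Theorem~\ref{roots} is stated case-by-case in $m\bmod3$ and in the cubic character of $c$, whereas the target conclusion is uniform in $m$; reconciling them requires evaluating $s^{(q^2-1)/3}$ with a split on the parity of $m$ and on $m\bmod3$ in order to absorb the $\F_q$-factor $s$ into the $\omega$-powers, and (to connect with the explicit roots) a norm computation for $v+v^{-1}$ and for the $\Lambda_1,\Lambda_2$ variants $(a^{-1}+v+v^{-1})/b^2$ deciding which lie in $\mu_{q^2+q+1}$. The bookkeeping establishing ``$C$ reducible $\iff(e+\omega)^{(q^2-1)/3}=1\iff$ no roots in $\mu_{q^2+q+1}$ for $\ell\in\{1,2\}$'' is where the most care is needed; the symmetric-function derivation of $C$ and the verification that $(x^2+h)/e$ is a root of $T^3+(e+1)hT^2+h^2T+1$ are routine, if lengthy, identities modulo $C(x)=0$.
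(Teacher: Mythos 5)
Your opening reduction (scaling by $\sqrt h$, the norm condition $N_{\F_{q^3}/\F_q}(y)=s^{-1}$, and the identification $sc=\omega^2(e+\omega)$) matches the paper's Lemma~\ref{scale} and Proposition~\ref{zhengprop2}. Where you diverge is in how you produce the cubic $C(X)=X^3+h^2X^2+(e+1)hX+1$ and the $\ell$-dichotomy, and these parts are genuinely different and sound: the symmetric-function computation on the Frobenius orbit correctly forces $e_1=h^2$, $e_2=(e+1)h$, $e_3=1$ (and also excludes roots in $\mu_{q^2+q+1}\cap\F_q$, since the degenerate case yields $e=0$); and your key identity does hold --- one checks that $e^3\,R\bigl((X^2+h)/e\bigr)=C(X)^2$ with $R(T)=T^3+(e+1)hT^2+h^2T+1$, so $(x^2+h)/e$ is indeed one of $x^{1+q},x^{1+q^2},x^{q+q^2}$, and the last is excluded for irreducible $C$ exactly as you say. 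This replaces the paper's route through Proposition~\ref{fi} (identifying $f_1$ by the ratio of its constant and leading coefficients) and is more self-contained. One phrasing slip: ``roots in $\mu_{q^2+q+1}$ exist iff $C$ is irreducible'' is only an ``only if'' for a \emph{fixed} $\ell$; irreducibility of $C$ guarantees roots for exactly one $\ell\in\{1,2\}$, which you do acknowledge in the next sentence.

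The genuine gap is that the actual content of the corollary --- the criterion $(e+\omega)^{(q^2-1)/3}=\omega^{-\ell}$ --- is never derived. You need two equivalences: (i) $C$ is irreducible over $\F_q$ iff $e+\omega$ is a non-cube in $\F_{q^2}$, and (ii) when it is irreducible, $(x^2+h)/e$ equals $x^{1+q}$ rather than $x^{1+q^2}$ iff $(e+\omega)^{(q^2-1)/3}=\omega^{-1}$. Your $(x^2+h)/e$ mechanism gives the dichotomy abstractly but is never connected to cubic characters, and the fallback you sketch (``feed $sc$ into Theorem~\ref{roots}\dots should collapse'') is exactly the bookkeeping the paper carries out via Lemma~\ref{irr}, Theorem~\ref{maincubic}, and Lemma~\ref{extra}, ending with the translation $c^{(q^2-1)/3}=(1+\omega^2e)^{(q^2-1)/3}$ and $\omega^{(q^2-1)/3}=\omega^m$; without executing it the proof is incomplete at its central claim. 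Two remarks that would ease your version of this step: since $s=(\sqrt h)^3$ is a cube in $\F_q^*\subseteq\F_{q^2}^*$, you get $(sc)^{(q^2-1)/3}=c^{(q^2-1)/3}$ with no parity split on $m$; and since square roots in characteristic $2$ are unique, $sc$ is not merely in $\{\omega(e+\omega^2),\omega^2(e+\omega)\}$ but equals $\omega^2(e+\omega)$ exactly, so $c^{(q^2-1)/3}=\omega^{2m}(e+\omega)^{(q^2-1)/3}$, which is the bridge you need.
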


\begin{comment}
Magma verification in small cases:

for m in [1..20] do q:=2^m; K:=GF(q); _<x>:=PolynomialRing(K);
if m mod 2 eq 0 then w:=Roots(x^2+x+1)[1,1]; else _<w>:=ext<K|x^2+x+1>; end if;
for e in K do if e^4 ne e then for hh in Roots(x^3+1+e+e^2) do h:=hh[1];
for ell in {1,2} do
gcd:=GCD(x^(q^2+q+1)+1,x^(2*q^ell+1)+h*x+e);
if Degree(gcd) notin {0,3} then "d not 0 or 3!",m,h,e; break m; end if;
if (Degree(gcd) eq 3) ne ((w+e)^((q^2-1) div 3) eq 1/w^ell) then "wrong condition",m,h,e; continue m; end if;
if Degree(gcd) eq 3 and gcd ne x^3+h^2*x^2+(e+1)*h*x+1 then "wrong gcd",m,h,e; continue m; end if;
end for; end for; end if; end for; "tested",m; end for;

\end{comment}

The ``odd $m$" case of Corollary~\ref{zheng} proves Conjecture~3.5 of \cite{ZKZPL}. The case $\ell=2$ of Corollary~\ref{zheng2} resolves Open Problem~1 of \cite{ZKZPL}.  
Moreover, Corollary~\ref{zheng2} provides an explicit form of the final condition in each of \cite[Thm.~3.2]{ZKZPL} and \cite[Thm.~3.6]{ZKZPL}; the desire to obtain explicit forms of those results was the original motivation for Open Problem~1 and Conjecture~3.5 of \cite{ZKZPL}. 
We note that \cite[Prop.~3.4]{ZKZPL} determines when the polynomial in Corollary~\ref{zheng} has three roots in the subfield $\F_q$ of $\F_{q^3}$ in case $m$ is odd, and also asserts that the polynomial in Corollary~\ref{zheng2} has no roots in $\mu_{q^2+q+1}$ when $\ell=2$, $m$ is congruent to $3$ or $5$ (mod~$6$), and $1+e\omega$ is a cube in $\F_{q^2}$.
The first assertion in this result is contained in our Proposition~\ref{zhengprop}, and the second follows from Corollary~\ref{zheng2}.

Our proofs of the above results are nearly self-contained, and use methods that are quite different from those that have been used previously. In particular, one key to our approach is the study of the rational function $a/(X^2+1)$.  
It seems conceivable that our approach might also yield explicit results for other instances of the general problems of either counting or naming the roots in $\F_q$ of $X^{Q+1}+X+a$.

This paper is organized as follows. In the next section we prove some easy preliminary results. Then in Section~\ref{sec:notation} we provide the notation used in Sections~\ref{sec:reduce}--\ref{sec:roots}.
In Section~\ref{sec:reduce} we reduce the study of roots in $\F_{q^3}$ of $X^{2q^{\ell}+1}+X+a$ to the study of roots of three associated degree-$3$ polynomials, and in particular we prove Lemma~\ref{twistdeg3} which is crucial to our approach. 
In the next section we prove Theorem~\ref{main} and Proposition~\ref{supplement}, and then in Section~\ref{sec:roots} we prove Theorem~\ref{roots}. 
We conclude the paper in Section~\ref{sec:zheng} by proving refinements of Corollaries~\ref{zheng} and \ref{zheng2}, in which in addition to counting the number of roots of the prescribed polynomials we determine these roots explicitly.

%#######################################################################
%#######################################################################

\section{Preliminary results}

In this section we provide some easy preliminary results. 

\begin{lemma} \label{gcd}
Let $R$ be a unique factorization domain, and let $\alpha$ be a nonzero non-unit in $R$. For any positive integers $m$ and $n$, the element $\alpha^{\gcd(m,n)}-1$ is a greatest common divisor of $\alpha^m-1$ and $\alpha^n-1$ in $R$.
\end{lemma}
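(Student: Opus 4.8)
The plan is to reduce everything to the elementary divisibility identity underlying the classical fact $\gcd(x^m-1,x^n-1)=x^{\gcd(m,n)}-1$; this argument is valid in any commutative ring, and the unique-factorization hypothesis serves only to make the phrase ``a greatest common divisor'' meaningful (gcds exist and are unique up to units). Write $d:=\gcd(m,n)$. I will establish two things, which together say exactly that $\alpha^d-1$ is a gcd of $\alpha^m-1$ and $\alpha^n-1$: first, that $\alpha^d-1$ divides each of $\alpha^m-1$ and $\alpha^n-1$; and second, that every common divisor of $\alpha^m-1$ and $\alpha^n-1$ divides $\alpha^d-1$.

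The first point is immediate. Since $d\mid m$, write $m=dk$; then the factorization $\alpha^m-1=(\alpha^d)^k-1=(\alpha^d-1)\bigl(\alpha^{d(k-1)}+\alpha^{d(k-2)}+\dots+1\bigr)$ shows $\alpha^d-1\mid\alpha^m-1$, and likewise $\alpha^d-1\mid\alpha^n-1$. The same factorization, applied with $\alpha^m$ in the role of $\alpha^d$, also records the auxiliary fact that $\alpha^m-1\mid\alpha^{sm}-1$ for every positive integer $s$, which I will use below.

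The second point is the crux. Let $c$ be any common divisor of $\alpha^m-1$ and $\alpha^n-1$. By B\'ezout there are integers $s,t$ with $sm+tn=d$, and since the general solution shifts $s$ by multiples of $n/d$ and $t$ by multiples of $m/d$, I may choose $s\ge 1$ and $t\le 0$; writing $t=-t'$ with $t'\ge 0$ this becomes $sm=d+t'n$. By the auxiliary fact, $c$ divides both $\alpha^{sm}-1$ and $\alpha^{t'n}-1$ (the latter is the zero element when $t'=0$, so this holds uniformly). The key step is the rearrangement
\[
\alpha^{sm}-1=\alpha^{d+t'n}-1=\alpha^d(\alpha^{t'n}-1)+(\alpha^d-1),
\]
which converts the signed B\'ezout relation into one involving only nonnegative exponents. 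Reading it as $\alpha^d-1=(\alpha^{sm}-1)-\alpha^d(\alpha^{t'n}-1)$ and using that $c$ divides both terms on the right yields $c\mid\alpha^d-1$, as required.

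The only real obstacle is bookkeeping: B\'ezout delivers a combination with a negative coefficient, which cannot be substituted directly into an exponent, so the substance of the proof is the displayed identity that trades the subtraction of exponents for the single correction term $\alpha^d-1$. I expect no analytic difficulty beyond this. An alternative that avoids signs altogether is to induct on $m+n$ using the identity $\alpha^m-1=\alpha^{m-n}(\alpha^n-1)+(\alpha^{m-n}-1)$ for $m>n$, which shows that the common divisors of $\{\alpha^m-1,\alpha^n-1\}$ coincide with those of $\{\alpha^n-1,\alpha^{m-n}-1\}$ and thus mirrors the subtractive Euclidean algorithm on the exponents down to the terminal pair $(d,d)$; I would present the B\'ezout version as the cleaner of the two.
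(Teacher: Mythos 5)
Your proof is correct, but it takes a genuinely different route from the paper's. The paper argues in one stroke: for an arbitrary $\beta\in R$ it passes to the quotient ring $R/\beta R$ and observes that $\beta$ divides both $\alpha^m-1$ and $\alpha^n-1$ exactly when the image $\mybar\alpha$ of $\alpha$ satisfies $\mybar\alpha^m=\mybar\alpha^n=1$, i.e.\ when the order of $\mybar\alpha$ in the unit group of $R/\beta R$ divides $\gcd(m,n)$; this single equivalence simultaneously shows that $\alpha^{\gcd(m,n)}-1$ is a common divisor (take $\beta=\alpha^{\gcd(m,n)}-1$) and that every common divisor divides it. You instead verify the two defining properties of a gcd separately and by hand: the geometric-series factorization for the easy direction, and a signed B\'ezout relation $sm=d+t'n$ together with the identity $\alpha^{sm}-1=\alpha^d(\alpha^{t'n}-1)+(\alpha^d-1)$ for the hard one. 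The underlying arithmetic is the same --- B\'ezout on the exponents; the paper's appeal to the order of $\mybar\alpha$ silently absorbs exactly the negative-exponent bookkeeping you carry out explicitly, since in a group of units negative powers cost nothing. What your version buys is complete elementarity (no quotient rings, no group theory) and, as you correctly note, validity in any commutative ring, the UFD hypothesis being needed only to make \emph{a} greatest common divisor a well-posed notion; what the paper's buys is brevity and the packaging of both divisibility directions into one equivalence. Your choice of $s\ge 1$, $t\le 0$ is legitimate (shift the B\'ezout pair by multiples of $n/d$ and $m/d$), and the degenerate case $t'=0$ is handled correctly, so there is no gap.
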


This result is well-known in case $R=\Z$, but we will also use it in case $R=\F_q[X]$. 
Since we do not know a reference for the latter case, we include the following proof.

\begin{proof}
An element $\beta\in R$ divides both $\alpha^m-1$ and $\alpha^n-1$ if and only if the element $\mybar \alpha:=\alpha+\beta R$ of the quotient ring $R/\beta R$ satisfies $\mybar \alpha^m=1$ and $\mybar \alpha^n=1$. 
This says that the order of $\mybar \alpha$ divides both $m$ and $n$, or equivalently divides $\gcd(m,n)$, i.e., $\mybar \alpha^{\gcd(m,n)}=1$. 
Thus $\beta$ divides both $\alpha^m-1$ and $\alpha^n-1$ if and only if $\beta$ divides $\alpha^{\gcd(m,n)}-1$, which concludes the proof.
\end{proof}

We now state a general result about the number of roots in $\F_q$ of a polynomial of the form $X^{Q+1} + X + a \in \F_q[X]$, where $Q$ is a power of the characteristic of $\F_q$. 
This result has overlap with \cite[Thm.~4.3]{Bluhermain}, \cite[Lemma~22]{DFHR}, \cite[Rmk.~5.14]{GologluPP}, \cite[Lemma~III.3]{GologluAPNbiprojective}, \cite[Thm.~1]{HK}, \cite[Lemma~9]{HZ}, and \cite[Thm.~8]{MS}; we provide a short self-contained proof for the reader's convenience.

\begin{lemma} \label{trinomial}
Write $q:=p^m$ and $Q:=p^n$ where $p$ is prime and $m$ and $n$ are positive integers. For any $a\in \F_q^*$, the number of roots in\/ $\F_q$ of the polynomial $X^{Q+1} + X + a$ is in $\{ 0, 1, 2, 1+p^{\gcd(m,n)} \}$.
Moreover, if $p=2$ and $\gcd(m,n)=1$ then this number of roots is in $\{ 0, 1, 3 \}$.
\end{lemma}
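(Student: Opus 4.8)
The plan is to fix a single root and reduce the entire count to solving one additive (linearized) equation. First I would dispose of the trivial case: if $X^{Q+1}+X+a$ has no root in $\F_q$ then the count is $0$, which lies in both claimed sets, so I may assume a root $x_0$ exists. Writing $f(X):=X^{Q+1}+X+a$, two nondegeneracy facts are needed and both follow from $a\neq 0$. Since $f(0)=a\neq 0$ we have $x_0\neq 0$; and if $x_0^Q=-1$ then $f(x_0)=x_0^{Q+1}+x_0+a=-x_0+x_0+a=a\neq 0$, so in fact $x_0^Q+1\neq 0$. These are exactly the conditions that later legitimize dividing by $x_0^Q+1$ and passing to a reciprocal.

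Next I would translate by $x_0$. Since $z\mapsto z^Q$ is the Frobenius, a direct expansion gives $f(x_0+z)=z^{Q+1}+x_0 z^Q+x_0^Q z+z$, and $x\mapsto x-x_0$ is a bijection from the roots of $f$ in $\F_q$ onto the roots of $g(z):=z\bigl(z^Q+x_0 z^{Q-1}+x_0^Q+1\bigr)$ in $\F_q$. The factor $z$ accounts for the root $x_0$ itself, so it remains to count the nonzero roots of $h(z):=z^Q+x_0 z^{Q-1}+(x_0^Q+1)$. The crucial step is the reciprocal substitution $w=1/z$: multiplying $h(1/w)$ by $w^Q$ yields $(x_0^Q+1)w^Q+x_0 w+1$, so the nonzero roots of $h$ correspond bijectively to the solutions $w\in\F_q$ of $L(w)=-1$, where $L(w):=(x_0^Q+1)w^Q+x_0 w$ is an $\F_p$-linear map; note $L(0)=0\neq -1$ forces every such $w$ to be nonzero automatically. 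Hence the number of roots of $f$ equals $1+\lvert\{w\in\F_q:L(w)=-1\}\rvert$.

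From here the count is governed by elementary linear algebra over $\F_p$. The solution set of the inhomogeneous equation $L(w)=-1$ is empty or a coset of $\ker L$, so its size is $0$ or $\lvert\ker L\rvert$. A nonzero $w\in\ker L$ satisfies $w^{Q-1}=-x_0/(x_0^Q+1)$, a fixed nonzero constant, so the number of such $w$ is $0$ or $\gcd(Q-1,q-1)$; by Lemma~\ref{gcd} with $R=\Z$ and $\alpha=p$ this gcd equals $p^{\gcd(m,n)}-1$, whence $\lvert\ker L\rvert\in\{1,p^{\gcd(m,n)}\}$. Combining, the number of roots lies in $\{1,2,1+p^{\gcd(m,n)}\}$ when a root exists, giving $\{0,1,2,1+p^{\gcd(m,n)}\}$ in general. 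For the final assertion, when $p=2$ and $\gcd(m,n)=1$ we have $\gcd(Q-1,q-1)=1$, so $w\mapsto w^{Q-1}$ is a bijection of the cyclic group $\F_q^*$; thus $w^{Q-1}=-x_0/(x_0^Q+1)$ always has exactly one solution, $\lvert\ker L\rvert=2$ whenever a root exists, and the count is forced into $\{0,1,3\}$, excluding $2$.

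The only genuinely clever point, and hence the main obstacle, is recognizing the reciprocal substitution $w=1/z$ that converts the degree-$Q$ polynomial $h$ into the affine-linear equation $L(w)=-1$; once this is in hand, everything reduces to the coset structure of $\ker L$ and the gcd computation supplied by Lemma~\ref{gcd}. A secondary point requiring care is verifying that $x_0\neq 0$ and $x_0^Q+1\neq 0$ hold for every root $x_0$, since these nondegeneracy conditions are precisely what guarantee that the reciprocal substitution and the passage to $w^{Q-1}=\text{const}$ are valid in all characteristics.
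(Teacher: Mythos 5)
Your proof is correct and follows essentially the same route as the paper's: fix a root, translate by it, apply the reciprocal substitution to turn the remaining factor into an affine equation $L(w)=-1$ for a linearized polynomial $L$, and bound $\lvert\ker L\rvert$ via $\gcd(Q-1,q-1)=p^{\gcd(m,n)}-1$. The only cosmetic difference is that the paper packages the translation-plus-reciprocal step as a single polynomial identity and applies Lemma~\ref{gcd} both over $\F_q[X]$ and over $\Z$, whereas you argue pointwise and use only the integer case.
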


\begin{proof}
Suppose $\lambda\in\F_q$ is a root of $B(X):= X^{Q+1} + X + a$. Then 
\[
B(X) = (X-\lambda)^{Q+1} + \lambda (X-\lambda)^Q + (\lambda^Q+1) (X-\lambda),
\]
so for $\delta\in\F_q\setminus\{\lambda\}$ we have $B(\delta)=0$ if and only if $1/(\delta-\lambda)$ is a root of $\widetilde{B}(X) := 1 + \lambda X + (\lambda^Q+1) X^Q$. 
Since $\widetilde{B}(0)\ne 0$, it follows that $N(B)=N(\widetilde{B}) + 1$, where $N(H)$ denotes the number of the roots in $\F_q$ of a polynomial $H(X)\in\F_q[X]$. 
Since $L(X):=\widetilde{B}(X)-1$ induces a homomorphism from the additive group of $\F_q$ to itself, we have $N(\widetilde B)\in\{0,N(L)\}$. 
Since $B(\lambda)=0$ and $a\ne 0$, we have $\lambda\notin\{0,-1\}$, so that $\lambda':=\lambda/(\lambda^Q+1)$ is in $\F_q^*$ and 
\[
N(L)=N(X^Q+\lambda' X) = 1 + N(X^{Q-1}+\lambda').
\]
Since $X^{Q-1}$ induces a homomorphism from $\F_q^*$ to itself, we have $N(X^{Q-1}+\lambda')\in\{0,N(X^{Q-1}-1)\}$. Finally, two applications of Lemma~\ref{gcd} yield 
\[
\gcd(X^{q-1}-1,X^{Q-1}-1)=X^{\gcd(q-1,Q-1)}-1=X^{p^{\gcd(m,n)}-1}-1, 
\]
so that $N(X^{Q-1}-1)=p^{\gcd(m,n)}-1$. We conclude that $N(B)\in\{1,2,1+p^{\gcd(m,n)}\}$. 
Finally, if $p^{\gcd(m,n)}=2$ then $X^{Q-1}$ permutes $\F_q^*$ so that $N(X^{Q-1}+\lambda')=1$, whence $N(B)\in\{1,3\}$.
\end{proof}

We will use the following special case of the above result.

\begin{cor} \label{Ni}
Let $q:=2^m$, and pick any $a\in \F_{q^3}^*$ and any positive integer $\ell$ coprime to $3$. Then the number of roots in $\F_{q^3}$ of the polynomial $X^{2q^\ell+1}+X+a$ is in $\{ 0, 1, 2, 9 \}$ if $m\equiv -\ell\pmod 3$, and is in $\{0, 1, 3\}$ otherwise.
\end{cor}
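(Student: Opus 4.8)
The plan is to deduce this directly from Lemma~\ref{trinomial}, taking the base field to be $\F_{q^3}=\F_{2^{3m}}$ and the exponent to be $Q:=2q^\ell$; the only real content is a short gcd computation. First I would rewrite the data in the form the lemma expects. Since $q=2^m$, we have $Q=2q^\ell=2^{m\ell+1}$, so $Q=2^n$ with $n:=m\ell+1$, while $\F_{q^3}=\F_{2^{3m}}$ plays the role of the lemma's $\F_{p^m}$ (here $p=2$ and the lemma's ``$m$'' is our $3m$). Because $a\in\F_{q^3}^*$, the hypothesis $a\neq 0$ holds, so Lemma~\ref{trinomial} tells us the number of roots in $\F_{q^3}$ lies in $\{0,1,2,\,1+2^{\gcd(3m,\,m\ell+1)}\}$, and moreover lies in $\{0,1,3\}$ whenever $\gcd(3m,m\ell+1)=1$.

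Next I would evaluate $d:=\gcd(3m,m\ell+1)$. Since $\gcd(m,m\ell+1)=\gcd(m,1)=1$, the divisor $d$ of $m\ell+1$ is coprime to $m$; as $d$ also divides $3m$, it follows that $d\mid 3$, so $d\in\{1,3\}$. Furthermore $d=3$ exactly when $3\mid m\ell+1$. To translate this into the stated congruence I would use that $\ell$ is coprime to $3$, hence $\ell^2\equiv 1\pmod 3$: if $m\equiv -\ell\pmod 3$ then $m\ell\equiv -\ell^2\equiv -1\pmod 3$, and conversely $m\ell\equiv -1\pmod 3$ gives $m\equiv m\ell^2\equiv -\ell\pmod 3$ upon multiplying by $\ell$. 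Thus $d=3$ precisely when $m\equiv -\ell\pmod 3$, and $d=1$ otherwise.

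Substituting these two cases into the conclusion of Lemma~\ref{trinomial} then finishes the argument: when $m\equiv -\ell\pmod 3$ the count lies in $\{0,1,2,\,1+2^3\}=\{0,1,2,9\}$, and otherwise $\gcd(3m,m\ell+1)=1$ forces it into $\{0,1,3\}$. I do not expect any genuine obstacle here; the whole proof reduces to the observation that $\gcd(3m,m\ell+1)\in\{1,3\}$ together with its reformulation as the congruence $m\equiv -\ell\pmod 3$, and the latter equivalence is the only step requiring a moment's care.
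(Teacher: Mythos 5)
Your proposal is correct and follows essentially the same route as the paper: both apply Lemma~\ref{trinomial} over $\F_{2^{3m}}$ with $Q=2^{1+\ell m}$ and reduce everything to showing $\gcd(3m,1+\ell m)\in\{1,3\}$, with the value $3$ occurring exactly when $m\equiv -\ell\pmod 3$. Your gcd computation and the congruence reformulation via $\ell^2\equiv 1\pmod 3$ are both sound, so nothing is missing.
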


\begin{comment}
Magma verification in small cases:

for m in [1..5] do q:=2^m; _<x>:=PolynomialRing(GF(q^3));
for a in GF(q^3) do if a ne 0 then for l in [1,2] do N:=#Roots(x^(2*q^l+1)+x+a);
ml:=IsDivisibleBy(m+l,3);
if not ( (ml and N in {0,1,2,9}) or ((not ml) and N in {0,1,3}) )
then "wrong"; break m; end if; end for; end if; end for; "tested",m; end for;

\end{comment}

\begin{proof}
For $r:=3m$ and $s:=1+\ell m$, the value $\gcd(r,s)=\gcd(3,s)$ equals $3$ if $m\equiv -\ell\pmod 3$, and equals $1$ otherwise. 
Thus Corollary~\ref{Ni} follows from the special case of Lemma~\ref{trinomial} with $p=2$ and with these values of $r$ and $s$.
\end{proof}

We also use the following result on factorizations of cubic polynomials over $\F_{2^m}$ (e.g., cf.\ \cite[Thm.~1]{Williams}).

\begin{lemma} \label{W}
Let $q$ be a power of $2$, and put $f(X):=X^3+aX+b$ where $a\in\F_q$ and $b\in\F_q^*$.
Let $N$ be the number of distinct roots of $f(X)$ in\/ $\F_q$, write $\Tr$ for the trace from\/ $\F_q$ to\/ $\F_2$, and pick $e\in\F_{q^2}^*$ satisfying $e^2+be+a^3=0$.
Then $N\in\{0,1,3\}$, and $N=1$ if and only if and only if\/ $\Tr(a^3/b^2)\ne\Tr(1)$. Moreover, if $N\in\{0,3\}$ then $N=3$ if and only if $e$ is a cube in\/ $\F_{q^2}$.
\end{lemma}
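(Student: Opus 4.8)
The plan is to prove the three assertions in turn, using the Lagrange resolvent of the cubic adapted to characteristic~$2$. First I would establish $N\in\{0,1,3\}$ by proving $f$ separable and excluding $N=2$. In characteristic~$2$ we have $f'(X)=3X^2+a=X^2+a=(X+a^{1/2})^2$, whose only root is the unique square root $a^{1/2}\in\F_q$ of $a$; since $f(a^{1/2})=a^{3/2}+a\cdot a^{1/2}+b=b\ne0$, the polynomials $f$ and $f'$ have no common root, so $f$ is separable with three distinct roots $r_1,r_2,r_3$ in $\overline{\F_q}$. As the coefficient of $X^2$ in $f$ vanishes, $r_1+r_2+r_3=0$, so if two roots lie in $\F_q$ then so does the third; hence $N\ne2$.

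Next I would fix $\zeta\in\F_4\setminus\F_2$ (so $1+\zeta+\zeta^2=0$ and $\F_4\subseteq\F_{q^2}$) and set $u:=r_1+\zeta r_2+\zeta^2 r_3$ and $v:=r_1+\zeta^2 r_2+\zeta r_3$. A direct computation using $r_1+r_2+r_3=0$, $\sum_{i<j}r_ir_j=a$, $r_1r_2r_3=b$, and $\zeta+\zeta^2=1$ gives $uv=a$ and $u+v=r_1$, whence $u^3+v^3=(u+v)^3+uv(u+v)=r_1^3+ar_1=b$. Thus $u^3,v^3$ are the two roots of $Y^2+bY+a^3$, so I may take $e=u^3$ (either root serves equally, since $x\mapsto x^q$ preserves cubes and interchanges the two roots when they lie outside $\F_q$). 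For assertion (3), assume $N\in\{0,3\}$ and $a\ne0$, so $u,v\ne0$; the case $a=0$, where $f=X^3+b$, I would dispatch directly. Since $\F_4\subseteq\F_{q^2}$, the three cube roots $u,\zeta u,\zeta^2 u$ of $e$ are all in $\F_{q^2}$ or none are, so $e$ is a cube in $\F_{q^2}$ iff $u\in\F_{q^2}$. If $N=3$ then $r_1,r_2,r_3\in\F_q$ and $\zeta\in\F_{q^2}$, so $u\in\F_{q^2}$ and $e$ is a cube; if $N=0$ then $f$ is irreducible and the Frobenius $\phi\colon x\mapsto x^q$ cyclically permutes the $r_i$, and a short computation of $\phi^2(u)$ (treating $m$ even, where $\zeta^q=\zeta$, and $m$ odd, where $\zeta^q=\zeta^2$) yields $\phi^2(u)=\zeta u\ne u$, so $u\notin\F_{q^2}$ and $e$ is not a cube.

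For assertion (2), the Artin--Schreier criterion (substitute $Y=bZ$) shows that $Y^2+bY+a^3$ has its roots in $\F_q$ exactly when $\Tr(a^3/b^2)=0$, i.e.\ $e\in\F_q\iff\Tr(a^3/b^2)=0$. It remains to compare $e\in\F_q$ with the permutation type of $\phi$. Since $\phi$ permutes the roots of $Y^2+bY+a^3$, we have $\phi(e)=\phi(u)^3\in\{e,\bar e\}=\{u^3,v^3\}$, and in each of the three permutation types of $\phi$ and the two parities of $m$ one computes $\phi(u)$ as a power of $\zeta$ times $u$ or $v$. Carrying this out, $N=1$ occurs precisely when $e\notin\F_q$ for $m$ even and when $e\in\F_q$ for $m$ odd; since $\Tr(1)=0$ for $m$ even and $\Tr(1)=1$ for $m$ odd, both cases merge into $N=1\iff\Tr(a^3/b^2)\ne\Tr(1)$.

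The main obstacle I anticipate is the bookkeeping in the last two paragraphs: one must track the Frobenius action on $\zeta$ (which depends on the parity of $m$) simultaneously with its action on the $r_i$, and in each resulting case decide whether $\phi$ scales $u$ by a power of $\zeta$ or instead carries $u$ to a multiple of $v$. The degenerate case $a=0$, where the resolvent $u$ may vanish, must also be handled separately, though it reduces to the elementary behaviour of $X^3+b$ and the fact that for $m$ even an element of $\F_q^*$ is a cube in $\F_q$ iff it is a cube in $\F_{q^2}$.
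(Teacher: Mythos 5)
Your argument is correct: the resolvent identities $u+v=r_1$, $uv=a$, $u^3+v^3=b$ all check out (so $u^3,v^3$ are indeed the two roots of $Y^2+bY+a^3$, which are distinct because $b\ne 0$), the Frobenius bookkeeping in each permutation type and each parity of $m$ yields exactly the stated trace and cubicity criteria, and you correctly isolate the degenerate case $a=0$ where one resolvent vanishes. For comparison: the paper does not print a proof of this lemma at all --- the main text cites Williams, whose argument runs through Berlekamp's characteristic-$2$ analogue of Stickelberger's theorem, while a remark notes that the lemma also follows from Lemma~\ref{f0roots}, i.e.\ from the identity $X^3+X+v_0^3+v_0^{-3}=\prod_{i=0}^{2}\bigl(X+\omega^i v_0+\omega^{-i}v_0^{-1}\bigr)$ after normalizing the cubic to the shape $X^3+X+a'$. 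Your route is that second, elementary route in different clothing: after normalization one has $uv=1$, so your $u$ and $v=u^{-1}$ are precisely the $v_0$ and $v_0^{-1}$ of that identity and $e=u^3$ is the paper's $e$; your Lagrange-resolvent derivation replaces the paper's direct verification of the product formula, and your case analysis of $\phi(u)$ by permutation type replaces the equivalent dichotomy that $v_0+v_0^{-1}\in\F_q$ if and only if $v_0\in\F_q^*\cup\mu_{q+1}$, split according to the parity of $m$. What your version buys is that it handles $X^3+aX+b$ directly, without the preliminary rescaling, and makes the Galois-theoretic mechanism explicit; what the paper's packaging buys is that the single factorization of Lemma~\ref{f0roots} is reused elsewhere (Corollary~\ref{3}, Theorem~\ref{roots}), so the root count comes for free once that identity is recorded.
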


\begin{comment}
Magma verification in small cases:

for m in [1..10] do q:=2^m; K:=GF(q); _<x>:=PolynomialRing(K); _<y>:=PolynomialRing(GF(q^2));
for b in K do if b ne 0 then for a in K do N:=#Roots(x^3+a*x+b);
R:={i[1]:i in Roots(y^2+b*y+a^3)} diff {0}; e:=Random(R);
if N notin {0,1,3} then "wrongN"; break m; end if;
if (N eq 1) ne (&+[(a^3/b^2 + 1)^(2^i):i in [0..m-1]] ne 0) then "wrong1"; break m; end if;
if N in {0,3} and ((N eq 3) ne IsPower(e,3)) then "wrong3"; break m; end if;
end for; end if; end for; "tested",m; end for;

\end{comment}

Next we describe the roots of $X^3+X+a$ in $\mybar\F_2$.

\begin{lemma}\label{f0roots}
For any $a,e\in\mybar\F_2^*$ with $e^2+ae+1=0$, the set of roots of $X^3+X+a$ in\/ $\mybar\F_2$ is $\{v+v^{-1}\colon v^3=e\}$.
\end{lemma}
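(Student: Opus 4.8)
The plan is to exploit the defining relation of $e$ together with the characteristic-$2$ cube identity. First I would rewrite the hypothesis $e^2+ae+1=0$: since $e\ne 0$, dividing by $e$ gives $a=e+e^{-1}$ (the two roots of $Y^2+aY+1$ are $e$ and $e^{-1}$, whose sum in characteristic $2$ is $a$ and whose product is $1$). This identifies the constant term $a$ as $e+e^{-1}$, which is exactly the shape that will emerge from the cube computation.

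Next I would verify that every element of the claimed set is a root. Fix $v\in\mybar\F_2$ with $v^3=e$; then $v\ne 0$, so $x:=v+v^{-1}$ is defined. Expanding in characteristic $2$,
\[
x^3=(v+v^{-1})^3=v^3+v+v^{-1}+v^{-3}=v^3+v^{-3}+x,
\]
so that $x^3+x=v^3+v^{-3}=e+e^{-1}=a$, giving $x^3+x+a=0$. Thus each $v+v^{-1}$ with $v^3=e$ is a root of $X^3+X+a$.

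It remains to show these account for all three roots. Since $\gcd(3,2)=1$, the polynomial $Y^3-e$ is separable and has three distinct roots in $\mybar\F_2$. The map $v\mapsto v+v^{-1}$ sends $w$ and $w'$ to the same value only when $w'\in\{w,w^{-1}\}$; and here $v^{-1}$ is never a cube root of $e$, because $(v^{-1})^3=e^{-1}$ and $e\ne e^{-1}$. (Indeed $e=e^{-1}$ would force $e^2=1$, i.e.\ $e=1$, whence $a=e+e^{-1}=0$, contradicting $a\ne 0$.) Hence the three cube roots of $e$ yield three \emph{distinct} values $v+v^{-1}$, all of which are roots of the degree-$3$ polynomial $X^3+X+a$. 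A degree-$3$ polynomial has at most three roots, so these are precisely its roots, completing the proof.

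The only point requiring any care is the injectivity of $v\mapsto v+v^{-1}$ on the cube roots of $e$, which I expect to be the mildly delicate step; it reduces cleanly to the observation $e\ne e^{-1}$, guaranteed by $a\ne 0$. Everything else is a direct characteristic-$2$ calculation followed by a degree count.
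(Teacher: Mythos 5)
Your proof is correct. It differs mildly from the paper's, which disposes of the whole lemma by verifying the single polynomial identity $X^3+X+v_0^3+v_0^{-3}=\prod_{i=0}^2\bigl(X+\omega^i v_0+\omega^{-i}v_0^{-1}\bigr)$ for a fixed cube root $v_0$ of $e$ and $\omega\in\F_4\setminus\F_2$; that identity simultaneously shows the claimed elements are roots and that they exhaust the roots, with no separate counting step. You instead split the argument: the same underlying computation $(v+v^{-1})^3+(v+v^{-1})=v^3+v^{-3}=e+e^{-1}=a$ shows containment of $\{v+v^{-1}:v^3=e\}$ in the root set, and then you get equality by showing the three cube roots of $e$ produce three distinct values of $v+v^{-1}$ (via $w+w^{-1}=w'+w'^{-1}\Rightarrow w'\in\{w,w^{-1}\}$ together with $e\ne e^{-1}$, which you correctly reduce to $a\ne 0$) and invoking the degree bound. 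The paper's route buys brevity and avoids the injectivity discussion; yours is equally elementary and makes explicit the small but genuine point that the map $v\mapsto v+v^{-1}$ is injective on the cube roots of $e$, which is exactly where $a\ne 0$ enters. Both are complete.
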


\begin{proof}
We simply check that if $v_0^3=e$ and $\omega\in\F_4\setminus\F_2$ then
\[
X^3+X+v_0^3+v_0^{-3} = \prod_{i=0}^2 \bigl(X+\omega^i v_0 + \omega^{-i} v_0^{-1}\bigr).\qedhere
\]
\end{proof}

\begin{comment}
Proof by hand:
More generally, it is easy to check in one's head that
X^3 + Y^3 + Z^3 + XYZ = prod_{i=0 to 2} (X + w^i Y + w^{-i} Z).
Substitute s_0 for Y and 1/s_0 for Z to deduce the claimed identity.

Proof by Magma:

K<w>:=GF(4);
L<s>:=FunctionField(K);
_<X>:=PolynomialRing(L);
e:=s^3;
&*[X+w^i*s+1/(w^i*s):i in [0,1,2]] eq X^3+X+(e^2+1)/e;

Magma verification in small cases:

for m in [6,8,10] do q:=2^m; K:=GF(q^3); _<x>:=PolynomialRing(K);
for e in GF(q) do if e notin {K!0,1} then a:=(e^2+1)/e;
R:=Roots(x^3+x+a); if &+[i[2]:i in R] ne 3 then "wrong"; break m; end if;
if {i[1]:i in R} ne {i[1]+1/i[1]:i in Roots(x^3+e)} then "wrong2"; break m; end if; end if; end for; "tested",m; end for;

\end{comment}

\begin{rmk}
Lemma~\ref{W} follows easily from Lemma~\ref{f0roots}, which yields a new proof of Lemma~\ref{W} that is more elementary than the proof in \cite{Williams} (which relies on Berlekamp's characteristic $2$ analogue of Stickelberger's theorem on the parity of the number of irreducible factors of a polynomial over a finite field). 
Also, since any degree-$3$ polynomial over $\mybar\F_2$ may be reduced to one of the forms $X^3+a$ or $X^3+X+a$ by composing with a degree-one polynomial on the right and a scalar multiple on the left, Lemma~\ref{f0roots} yields a similar description of the roots of any degree-$3$ polynomial over $\mybar\F_2$ (and both this description and Lemma~\ref{f0roots} remain valid if $\mybar\F_2$ is replaced by any algebraically closed field of characteristic $2$).
\end{rmk}

We conclude this section by determining the roots in $\F_{q^3}$ of $X^{2q^\ell}+X+a$ in some relatively easy cases.

\begin{cor} \label{3}
Let $q=2^m$, and let\/ $\Tr$ be the trace from\/ $\F_q$ to\/ $\F_2$. For any $a\in\F_q^*$ with\/ $\Tr(1/a)=\Tr(1)$, and any nonnegative integer $\ell$ divisible by $3$, pick $e\in\F_{q^2}^*$ with $e^2+ae+1=0$. 
Let $\Gamma$ be the set of roots in\/ $\F_{q^3}$ of $X^{2q^\ell+1}+X+a$. Then $\abs{\Gamma}=3$, and $\Gamma=\{v+v^{-1}\colon v^3=e\}$.
\end{cor}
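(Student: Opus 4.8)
The plan is to reduce, on $\F_{q^3}$, the polynomial $X^{2q^\ell+1}+X+a$ to the cubic $X^3+X+a$, and then to show that this cubic splits completely over $\F_{q^3}$. First I would use the hypothesis $3\mid\ell$: writing $\ell=3t$ gives $q^\ell=(q^3)^t$, so every $x\in\F_{q^3}$ satisfies $x^{q^\ell}=x$ and hence $x^{2q^\ell+1}=x^2\cdot x=x^3$. Thus $\Gamma$ is precisely the set of roots in $\F_{q^3}$ of $X^3+X+a$, and the problem becomes the purely cubic one of placing all three roots of $X^3+X+a$ inside $\F_{q^3}$ and identifying them.

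I would next record that $X^3+X+a$ is separable: its derivative is $(X+1)^2$, whose only root $X=1$ fails to be a root of $X^3+X+a$ because $a\ne 0$. Hence $X^3+X+a$ has three distinct roots in $\mybar\F_2$, which Lemma~\ref{f0roots} identifies as $\{v+v^{-1}\colon v^3=e\}$ for the chosen $e$ satisfying $e^2+ae+1=0$. It therefore suffices to show that all three of these roots lie in $\F_{q^3}$, since then $\Gamma=\{v+v^{-1}\colon v^3=e\}$ and $\abs{\Gamma}=3$ follow immediately.

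To put all three roots in $\F_{q^3}$ I would analyze the factorization type of $X^3+X+a$ over $\F_q$. A separable cubic over $\F_q$ factors either as three linear factors, or as a linear factor times an irreducible quadratic, or as an irreducible cubic; in the first case the roots lie in $\F_q\subseteq\F_{q^3}$ and in the third they lie in $\F_{q^3}$, so the only obstruction is the middle case, which occurs exactly when $X^3+X+a$ has precisely one root in $\F_q$. The trace hypothesis rules this out: by Lemma~\ref{W} applied to $X^3+X+a$ (so that its linear coefficient is $1$ and its constant term is $a$), the number of roots in $\F_q$ equals $1$ if and only if $\Tr(1/a^2)\ne\Tr(1)$, whereas the identity $\Tr(x^2)=\Tr(x)$ gives $\Tr(1/a^2)=\Tr(1/a)=\Tr(1)$. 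Hence the number of roots in $\F_q$ is not $1$, the middle factorization type is impossible, and all three roots lie in $\F_{q^3}$.

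The main obstacle is precisely this exclusion of the linear-times-quadratic factorization: its two non-rational roots would lie in $\F_{q^2}$, and since $\F_{q^2}\cap\F_{q^3}=\F_q$ they would not survive into $\F_{q^3}$, so the count $\abs{\Gamma}=3$ would collapse. The trace condition $\Tr(1/a)=\Tr(1)$ is exactly the input that, through Lemma~\ref{W}, forbids this; the remaining steps --- the reduction to $X^3+X+a$, the separability check, and the appeal to Lemma~\ref{f0roots} --- are routine.
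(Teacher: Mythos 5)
Your proposal is correct and follows essentially the same route as the paper: reduce $X^{2q^\ell+1}+X+a$ to $X^3+X+a$ on $\F_{q^3}$ via $x^{q^\ell}=x$, invoke Lemma~\ref{W} (with the trace hypothesis) to get $\abs{\Gamma}=3$, and invoke Lemma~\ref{f0roots} for the explicit description of the roots. You merely spell out the case analysis (excluding the linear-times-irreducible-quadratic factorization) that the paper leaves implicit in its appeal to Lemma~\ref{W}.
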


\begin{comment}
Magma verification in small cases:

for m in [1..10] do q:=2^m; _<x>:=PolynomialRing(GF(q^3)); _<y>:=PolynomialRing(GF(q^6));
for a in GF(q) do if a ne 0 and &+[(1/a+1)^(2^i):i in [0..m-1]] eq 0 then e:=Roots(y^2+a*y+1)[1,1];
R:={i[1]:i in Roots(x^3+x+a)};
if not (#R eq 3 and R eq {i[1]+1/i[1]:i in Roots(y^3+e)}) then "wrong"; break m; end if; end if; end for; "tested",m; end for;

\end{comment}

\begin{proof}
For $u\in\F_{q^3}$ we have $u^{2q^\ell+1}=u^3$, so $\Gamma$ is the set of roots in $\F_{q^3}$ of $X^3+X+a$. 
Thus Lemma~\ref{W} implies that $\abs{\Gamma}=3$, and Lemma~\ref{f0roots} yields the description of $\Gamma$.
\end{proof}

\begin{prop} \label{main2}
Let $q=2^m$, and pick any $a\in\F_q\setminus\F_2$ such that $\Tr(1/a)\ne\Tr(1)$, where $\Tr$ denotes the trace from\/ $\F_q$ to\/ $\F_2$. 
Then $X^{2q^\ell+1}+X+a$ has a unique root in\/ $\F_{q^3}$ for each nonnegative integer $\ell$. 
This root is $e^n+e^{-n}$ where $e\in\F_{q^2}$ satisfies $e^2+ae=1$ and $n$ is either $(2q-1)/3$ or $(q+2)/3$ according as $m$ is either odd or even.
\end{prop}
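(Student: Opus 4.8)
The plan is to first exhibit the explicit root $\lambda:=e^n+e^{-n}$ and verify it solves the polynomial for every $\ell$, and then to pin down the count by a Galois-orbit parity argument combined with Corollary~\ref{Ni}.

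\textbf{Existence and explicit form.} First I would locate $e$. The two roots of $Y^2+aY+1$ lie in $\F_{q^2}$ and are inverse to one another, so $e+e^{-1}=a$; whether they lie in $\F_q$ is governed by a trace condition, since substituting $Y=aZ$ turns $Y^2+aY+1$ into $Z^2+Z+a^{-2}$, which splits over $\F_q$ exactly when $\Tr(a^{-2})=\Tr(a^{-1})=0$. As $\Tr(1)\equiv m\pmod 2$, the hypothesis $\Tr(1/a)\ne\Tr(1)$ forces $e\in\F_q^*$ with $e^{q-1}=1$ when $m$ is odd, and $e\in\F_{q^2}\setminus\F_q$ with $e^{q+1}=1$ when $m$ is even. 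In either case a direct check gives $e^{3n}=e$ for the prescribed $n$ (using $3n=2(q-1)+1$ when $m$ is odd and $3n=(q+1)+1$ when $m$ is even), so $v:=e^n$ is a cube root of $e$. Lemma~\ref{f0roots} then shows $\lambda=v+v^{-1}$ is a root of $X^3+X+a$, and the relations $e^{q-1}=1$ (resp.\ $v^{q+1}=1$, whence $v^q=v^{-1}$) give $\lambda^q=\lambda$, i.e.\ $\lambda\in\F_q$. Finally, since $\lambda\in\F_q$ we have $\lambda^{q^\ell}=\lambda$ and hence $\lambda^{2q^\ell+1}=\lambda^3$, so $\lambda^{2q^\ell+1}+\lambda+a=\lambda^3+\lambda+a=0$: thus $\lambda$ is a root of the given polynomial for \emph{every} nonnegative integer $\ell$.

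\textbf{Uniqueness via a parity argument.} Write $B(X):=X^{2q^\ell+1}+X+a$ and $N:=\abs{\Gamma}$. The key observation is that $B\in\F_q[X]$, so its root set in $\F_{q^3}$ is stable under $\Gal(\F_{q^3}/\F_q)$, which is cyclic of order $3$; the orbits of this group on $\F_{q^3}$ have size $1$ (for elements of $\F_q$) or $3$ (for elements of $\F_{q^3}\setminus\F_q$). Moreover $B$ has exactly one root in $\F_q$: any $\delta\in\F_q$ with $B(\delta)=0$ satisfies $\delta^{2q^\ell+1}=\delta^3$, hence is a root of $X^3+X+a$, and Lemma~\ref{W} together with $\Tr(1/a)\ne\Tr(1)$ shows this cubic has exactly one root in $\F_q$, namely $\lambda$. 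Consequently $N=1+3k$ for some $k\ge 0$, so $N\equiv 1\pmod 3$.

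\textbf{Finishing the count, and the main obstacle.} It remains only to bound $N$. If $3\nmid\ell$, Corollary~\ref{Ni} gives $N\in\{0,1,2,9\}$ or $N\in\{0,1,3\}$; intersecting either set with the class $N\equiv 1\pmod 3$ leaves only $N=1$. If $3\mid\ell$, then $u^{q^\ell}=u$ for all $u\in\F_{q^3}$, so $B$ and $X^3+X+a$ have the same roots in $\F_{q^3}$; the cubic is separable (as $a\ne 0$, so $X=1$ is not a root of its derivative $(X+1)^2$), and its two roots other than $\lambda$ are conjugate over $\F_q$, hence lie in $\F_{q^2}\setminus\F_q$, which meets $\F_{q^3}$ only in $\F_q$, giving $N=1$ again. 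I expect the uniqueness to be the only nontrivial point: the bound from Corollary~\ref{Ni} by itself still permits $N\in\{3,9\}$, and the clean way to exclude these values is precisely the congruence $N\equiv 1\pmod 3$ arising from the size-$3$ Galois orbits, which lets us settle this proposition without invoking the heavier reduction developed in the later sections.
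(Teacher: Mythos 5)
Your proof is correct and follows essentially the same route as the paper's: Lemma~\ref{W} pins down the unique root in $\F_q$, the size-$3$ Frobenius orbits give $N\equiv 1\pmod 3$, Corollary~\ref{Ni} caps the count, and the explicit root is built from $v=e^n$ exactly as in the paper (your appeal to Lemma~\ref{f0roots} replaces the paper's direct verification that $u^3+u+a=v^3+v^{-3}+e+e^{-1}$). Your separate treatment of $3\mid\ell$ is a worthwhile touch of extra care, since Corollary~\ref{Ni} as stated assumes $\gcd(\ell,3)=1$ and the paper passes over this case silently.
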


\begin{comment}
Magma verification in small cases:

for m in [1..10] do q:=2^m; 
if m mod 2 eq 1 then n:=(2*q-1) div 3; else n:=(q+2) div 3; end if;
_<x>:=PolynomialRing(GF(q^3)); _<y>:=PolynomialRing(GF(q^6));
for a in GF(q) do if a notin {GF(q)!0,1} and &+[(1/a+1)^(2^i):i in [0..m-1]] ne 0 then e:=Roots(y^2+a*y+1)[1,1]; r:=e^n+1/e^n;
for l in [0,1,2] do if {i[1]:i in Roots(x^(2*q^l+1)+x+a)} ne {r} then "wrong"; break m; end if; end for; end if; end for; "tested",m; end for;

\end{comment}

\begin{proof}
The roots in $\F_q$ of $H(X):=X^{2q^\ell+1}+X+a$ are precisely the roots in $\F_q$ of $G(X):=X^3+X+a$. Since $\Tr(1/a)\ne 1$, we have $\Tr(1/a^2)\ne 1$, so Lemma~\ref{W} implies that $G(X)$ has a unique root in $\F_q$.  
Hence the number of roots in $\F_{q^3}$ of $H(X)$ is congruent to $1$ mod $3$, so it must be $1$ by Corollary~\ref{Ni}.

It remains to determine the unique root. Pick $e\in\F_{q^2}$ such that $e^2+ae=1$, and write $v:=e^n$ where $n$ is as in the result. 
Then $u:=v+v^{-1}$ satisfies $u^3+u+a=v^3+v^{-3}+e+e^{-1}$, which would be $0$ if $v^3=e$. In order to show that $u$ is a root in $\F_{q^3}$ of $X^{2q^\ell+1}+X+a$, it suffices to show that $u\in\F_q$ and $v^3=e$. 
If $m$ is odd then $\Tr(1/a^2)=\Tr(1/a)=1+\Tr(1)=0$, so since $(e/a)^2+(e/a)=1/a^2$ we conclude that $e/a\in\F_q$ and thus $e\in\F_q$; hence $u\in\F_q$ and $v^3=e^{3n}=e^{2q-1}=e$, as required. Now assume $m$ is even, so that $\Tr(1/a^2)=1+\Tr(1)=1$, and thus $e\notin\F_q$. 
Then $X^2+aX+1$ is the minimal polynomial of $e$ over $\F_q$, so the roots of this polynomial are $e$ and $e^q$, and thus $e^{q+1}=1$. It follows that 
\[
u^q=v^q+v^{-q}=e^{nq}+e^{-nq}=e^{-n}+e^n=v^{-1}+v=u, 
\]
so that $u\in \F_q$, and also $v^3=e^{3n}=e^{q+2}=e$. Thus in each case $u$ is in $\F_q$ and $u^{2q^\ell+1}+u+a=0$, as desired.
\end{proof}

\begin{rmk} \label{a=0}
If $q$ and $Q$ are powers of $2$ then the roots in\/ $\F_q$ of $X^{Q+1}+X$ are $0$ and $1$, since $X^{Q+1}+X=X(X+1)^Q$.
\end{rmk}

%#######################################################################
%#######################################################################

\section{Notation} \label{sec:notation}

In the next three sections we use the following notation:

\begin{itemize}
\item $\ell$ is a prescribed positive integer coprime to $3$,
\item $q:=2^m$ for some positive integer $m$,
\item $\mybar\F_q$ is the algebraic closure of $\F_q$,
\item $\Tr(X)$ is the trace relative to the field extension $\F_q/\F_2$,
\item $a\in\F_q^*$ satisfies $\Tr(1/a)=\Tr(1)$ (and $a\ne 1$ after section~\ref{sec:reduce}),
\item $b$ is a prescribed element of $\F_q$ satisfying $b^2+b+1=1/a$ (and $b\notin\F_4$ after section~\ref{sec:reduce}),
\item $\omega$ is a prescribed element of $\F_4\setminus\F_2$,
\item if $a\ne 1$ then $c:=(b+\omega)/(b+\omega^2)$ (which is only used after section~\ref{sec:reduce}),
\item $\rho(X):= a/(X^2+1)$,
\item $f(X):= (a^2+1)X^9 +aX^8 + (a^4+a^2+1)X + (a^5+a)$,
\item $f_0(X):= X^3 + X + a$,
\item $f_1(X) := ab^2 X^3 + X^2 + a(b+1)^2 X + a^2 b^4$,
\item $f_2(X):= a(b+1)^2 X^3 + X^2 + ab^2 X + a^2(b+1)^4$,
\item $H_k(X):=X^{2q^k+1}+X+a$ for any nonnegative integer $k$,
\item $\Gamma_k$ is the set of roots in $\F_{q^3}$ of $H_k(X)$,
\item $N_k$ is the size of $\Gamma_k$,
\item for any positive integer $n$, $D_n(X)$ is the Dickson polynomial of the first kind with degree $n$ and parameter $1$, which is the unique polynomial in $\F_2[X]$ satisfying $D_n(X+X^{-1})=X^n+X^{-n}$.
\end{itemize}

%#######################################################################
%#######################################################################

\section{From roots of $H_\ell(X)$ to irreducibility of $f_i(X)$} \label{sec:reduce}

In this section we prove the following result, using the notation from Section~\ref{sec:notation}.

\begin{prop} \label{fi}
Suppose $a\ne 1$. For $k\in\{1,2\}$ and any irreducible degree-$3$ polynomial $g(X) \in\F_q[X]$ which divides $H_k(X)$, there is a unique $i\in\{1,2\}$ for which $g(X)$ is a constant times $f_i(X)$. 
Conversely, for $i\in\{1,2\}$, if $f_i(X)$ is irreducible over\/ $\F_q$ then $f_i(X)$ divides $H_k(X)$ for a unique $k\in\{1,2\}$.
\end{prop}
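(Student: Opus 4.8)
The plan is to recast the equation $H_k(u)=0$ as a dynamical condition on the rational function $\rho$. For $u\in\F_{q^3}^{*}$, dividing $H_k(u)=u^{2q^k+1}+u+a$ by $u$ shows that $H_k(u)=0$ is equivalent to $u=a/(u^{2q^k}+1)=\rho(u^{q^k})$; thus a nonzero root of $H_k$ is exactly a fixed point of $x\mapsto\rho(x^{q^k})$. The decisive structural fact is that, since $x\mapsto x^2$ is a bijection of $\mybar\F_q$ in characteristic $2$, the map $\rho$ is itself a \emph{bijection} of $\bP^1(\mybar\F_q)$ (the unique preimage of $t\notin\{0,\infty\}$ is the square root of $a/t+1$), so iterating $\rho$ partitions $\mybar\F_q$ into cycles. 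A direct computation, using $(X^2+1)^2=X^4+1$, gives $\rho^{\circ 2}(X)=a(X^4+1)/(X^4+a^2+1)$ and then shows that $\rho^{\circ 3}(X)=X$ is equivalent to $f(X)=0$; moreover $f=f_0f_1f_2$ (expand the product and use $b^2+b+1=1/a$, noting the leading coefficients already agree since $a^2b^2(b+1)^2=a^2+1$), and the roots of $f_0=X^3+X+a$ are precisely the fixed points of $\rho$. Hence every point of exact period $3$ under $\rho$ is a root of $f_1f_2$.

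For the forward direction, let $g\in\F_q[X]$ be an irreducible cubic dividing $H_k$ and let $u$ be a root, so $u\in\F_{q^3}\setminus\F_q$ and $u\ne 0$. Put $u_i:=u^{q^{ik}}$ for $i\in\{0,1,2\}$; since $\gcd(k,3)=1$ these are the three roots of $g$. Applying the Frobenius $x\mapsto x^{q^k}$ (which fixes the $\F_q$-coefficients of $\rho$) to $u_0=\rho(u_1)$ gives $u_1=\rho(u_2)$ and $u_2=\rho(u_0)$, so the roots of $g$ form a single $3$-cycle of $\rho$; they are not fixed points, since injectivity of $\rho$ would otherwise force $u_0=u_1$. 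Being of exact period $3$, each $u_i$ is a root of $f_1f_2$ but not of $f_0$, whence $g\mid f_1f_2$ in $\F_q[X]$. As $g$ is an irreducible cubic it divides one of the cubics $f_1,f_2$ and hence is a constant multiple of it; the index $i$ is unique because $f_1$ and $f_2$ are never proportional (proportionality would equate their ratios of $X^3$- and $X^2$-coefficients, forcing $b^2=(b+1)^2$).

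For the converse, suppose $f_i$ is irreducible over $\F_q$. Its three distinct roots lie in $\F_{q^3}\setminus\F_q$ and are of exact period $3$ (they are roots of $f=f_0f_1f_2$ but not of $f_0$, since $f_i\not\propto f_0$ by comparison of normalized $X^2$-coefficients), and — this is the step I would isolate as a lemma — they constitute a single $\rho$-cycle, because the root set of each $f_i$ is $\rho$-invariant. Granting this, label the roots so that $\rho(p_j)=p_{j+1}$ with indices modulo $3$. The Frobenius $\sigma\colon x\mapsto x^q$ commutes with $\rho$ and permutes the $p_j$ transitively, so $\sigma(p_j)=p_{j+t}$ for a fixed $t\in\{1,2\}$. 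Then $f_i\mid H_k$ holds iff $p_0=\rho(\sigma^k(p_0))=\rho(p_{tk})=p_{tk+1}$, i.e.\ precisely when $tk\equiv 2\pmod 3$; this has the unique solution $k=2$ if $t=1$ and $k=1$ if $t=2$, giving the asserted unique $k\in\{1,2\}$.

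The main obstacle is the $\rho$-invariance of the root sets of $f_1$ and $f_2$, i.e.\ the assertion that these explicit cubics are exactly the two period-$3$ cycles of $\rho$. I would establish this by the direct verification that $f_i(X)$ divides $(X^2+1)^3\,f_i(\rho(X))$ in $\F_q[X]$, which together with the identity $f=f_0f_1f_2$ and the description of $\rho$'s fixed points pins down $f_1,f_2$ as the two $3$-cycles. Both this divisibility and the factorization identity are routine but heavy characteristic-$2$ computations (simplified throughout by $b^2+b+1=1/a$ and by the bijectivity of $\rho$), and they are the technical heart of the argument.
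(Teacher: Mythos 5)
Your proposal is correct and takes essentially the same approach as the paper: the equivalence $H_k(u)=0\iff u=\rho(u^{q^k})$, the identity $\rho\circ\rho\circ\rho(X)+X=f(X)/g(X)$ with $f=f_0f_1f_2$, and the verification that $\rho$ fixes the roots of $f_0$ while permuting the roots of each $f_i$ ($i\in\{1,2\}$) in a single $3$-cycle are precisely the paper's Lemmas~\ref{twistdeg3}, \ref{numerator}, and \ref{factorization}, and your final Frobenius-versus-$\rho$ rotation argument matches the paper's proof of the converse. The only cosmetic difference is that you exclude $g\propto f_0$ dynamically (a root of $g$ cannot be a fixed point of $\rho$, by injectivity of $\rho$) instead of via the gcd computation in Lemma~\ref{f0}, a variant the paper itself notes is available.
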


Although the above result requires $a\ne 1$, we allow $a=1$ in the first two lemmas below, since we will use these lemmas to resolve the case $a=1$ in Corollary~\ref{a=1}.

\begin{lemma} \label{numerator}
We have
\[
\bigl( \rho(X)\circ \rho(X)\circ \rho(X) \bigr) + X = \frac{f(X)}{g(X)}
\]
for some $g(X)\in\F_q[X]$ which is coprime to $f(X)$.
\end{lemma}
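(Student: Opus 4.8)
The plan is to compute the threefold self-composition of $\rho(X)=a/(X^2+1)$ explicitly and then read off both $f$ and a suitable denominator $g$. Characteristic $2$ keeps this manageable, since the Frobenius identities $(X^2+1)^2=X^4+1$ and $(Y+Z)^2=Y^2+Z^2$ collapse the cross terms. First I would compute $\rho(\rho(X))=a(X^2+1)^2/\bigl(a^2+(X^2+1)^2\bigr)$, and then substitute once more to obtain
\[
\rho^{\circ 3}(X)=\frac{P(X)}{Q(X)},\qquad P(X):=a\bigl(X^8+a^4+1\bigr),\quad Q(X):=(a^2+1)X^8+(a^4+a^2+1),
\]
where in the second step one uses $(X^4+a^2+1)^2=X^8+a^4+1$ to simplify the denominator. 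Adding $X$ gives $\rho^{\circ 3}(X)+X=\bigl(P(X)+X\,Q(X)\bigr)/Q(X)$, and a direct expansion shows that $P(X)+X\,Q(X)$ is exactly the polynomial $f(X)$ from Section~\ref{sec:notation}. Thus I would take $g(X):=Q(X)$.

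It then remains to prove $\gcd(f,g)=1$. Since $\gcd(P+XQ,\,Q)=\gcd(P,Q)$, it suffices to show that $P$ and $Q$ are coprime. The clean way to do this is to exhibit a B\'ezout-type relation: a short calculation gives
\[
a\,Q(X)+(a^2+1)\,P(X)=a^7,
\]
the $X^8$-terms cancelling because both coefficients of $X^8$ equal $a(a^2+1)$. As $a\ne 0$, the right-hand side is a nonzero constant, so $\gcd(P,Q)=1$, whence $\gcd(f,g)=1$. This combination is uniform in $a$, so it also covers the borderline case $a=1$, where $Q=1$ and the claim is trivial. (Alternatively, one could argue conceptually: $\rho$ has degree $2$ as a self-map of $\bP^1$, so $\rho^{\circ 3}$ has degree $8$; since $P$ and $Q$ both have degree $8$ whenever $a\ne 1$, the fraction $P/Q$ must already be in lowest terms, which forces coprimality.)

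I expect no genuine obstacle here. The only real work is the bookkeeping in the composition, which the characteristic-$2$ simplifications render routine, and the single nonobvious ingredient is spotting the combination $a\,Q+(a^2+1)\,P=a^7$ that makes the coprimality of $f$ and $g$ immediate.
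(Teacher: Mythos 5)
Your proof is correct and follows essentially the same route as the paper: the same explicit composition yielding numerator $aX^8+a^5+a$ and denominator $(a^2+1)X^8+a^4+a^2+1$, and the identical B\'ezout relation $a\,Q+(a^2+1)\,P=a^7$ (the paper also notes the same degree-count alternative you give in parentheses). No gaps.
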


\begin{proof}
We compute
\begin{align*}
\rho(X) \circ \rho(X) \circ \rho(X)
&= \frac{a}{X^2+1} \circ \frac{a}{X^2+1} \circ \frac{a}{X^2+1} \\ 
&= a\frac{X^4+1}{X^4+(a^2+1)} \circ \frac{a}{X^2+1} \\
&= a\frac{a^4+X^8+1}{a^4+(a^2+1)(X^8+1)} \\
&= \frac{h(X)}{g(X)}
\end{align*}
where $g(X):=(a^2+1)X^8+(a^4+a^2+1)$ and $h(X):=aX^8+(a^5+a)$. 
Here $g(X)$ and $h(X)$ are coprime since $ag(X)+(a^2+1)h(X)=a^7$ is a nonzero constant (or alternately, since $\max\bigl(\deg(g),\deg(h)\bigr)=8=\deg(\rho\circ\rho\circ\rho)$). Thus
\[
\bigl(\rho(X)\circ\rho(X)\circ\rho(X)\bigr)+X=\frac{Xg(X)+h(X)}{g(X)}
\]
where $Xg(X)+h(X)$ and $g(X)$ are coprime, which concludes the proof since $Xg(X)+h(X)=f(X)$.
\end{proof}

\begin{lemma} \label{twistdeg3}
The following statements hold for each $\beta\in\mybar\F_q$.
\begin{itemize}
\item We have $\beta\in\F_{q^3}$ and $H_2(\beta)=0$ if and only if $f(\beta)=0$ and $\rho(\beta)=\beta^q$.
\item We have $\beta\in\F_{q^3}$ and $H_1(\beta)=0$ if and only if $f(\beta)=0$ and $\rho(\beta)=\beta^{1/q}$.
\item We have $H_0(\beta)=0$ if and only if $f(\beta)=0$ and $\rho(\beta)=\beta$.
\end{itemize}
\end{lemma}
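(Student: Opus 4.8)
The plan is to translate each condition $H_k(\beta)=0$ into a statement about the dynamics of the rational function $\rho$, and then invoke Lemma~\ref{numerator} to bring in $f$. Two facts make everything fit together. First, because $\rho$ has coefficients in $\F_q$, it commutes with the $q$-power Frobenius: $\rho(\gamma^q)=\rho(\gamma)^q$ for every $\gamma\neq 1$, and hence also $\rho(\gamma^{1/q})=\rho(\gamma)^{1/q}$. Second, Lemma~\ref{numerator} gives $(\rho\circ\rho\circ\rho)(X)+X=f(X)/g(X)$ with $f,g$ coprime, so that whenever $\rho^{\circ 3}(\beta)$ is finite we have $f(\beta)=(\beta+\rho^{\circ 3}(\beta))\,g(\beta)$ with $g(\beta)\neq 0$, whence (in characteristic $2$) $f(\beta)=0$ exactly when $\beta$ is a fixed point of $\rho^{\circ 3}$.

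First I would record the elementary ``unpacking'' identity: for $\beta\neq 1$ and any $k\geq 0$,
\[
H_k(\beta)=0 \iff \rho(\beta^{q^k})=\beta,
\]
which is immediate on writing $\rho(\beta^{q^k})=a/(\beta^{2q^k}+1)$, cross-multiplying, and recalling we are in characteristic $2$; here $a\neq 0$ guarantees $\beta\neq 1$ for any actual root, so nothing is lost. Combining this with the Frobenius-commuting property rewrites the right-hand side as $\rho(\beta)^{q^k}=\beta$, i.e.\ $\rho(\beta)=\beta^{1/q^k}$. For $k=0$ and $k=1$ this is already the rotation condition in the statement; for $k=2$ it reads $\rho(\beta)=\beta^{1/q^2}$, which agrees with the stated $\rho(\beta)=\beta^q$ precisely when $\beta\in\F_{q^3}$ (since then $\beta^{1/q^2}=\beta^q$). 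This reconciliation of exponents in the $k=2$ case is the one place requiring care, and it is where the membership $\beta\in\F_{q^3}$ genuinely enters.

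Next I would connect $f(\beta)=0$ to membership in $\F_{q^3}$ under the rotation hypothesis. Assuming $\rho(\beta)=\beta^{1/q^k}$ (equivalently $\rho(\beta)=\beta^q$ when $k=2$), iterating the Frobenius-commuting property expresses $\rho^{\circ 2}(\beta)$ and $\rho^{\circ 3}(\beta)$ as Frobenius powers of $\beta$; in each case $\rho^{\circ 3}(\beta)=\beta^{q^{\pm 3}}$, a finite element of $\mybar\F_q$. Since $\beta\neq 1$ forces every intermediate value $\beta^{q^{\pm 1}},\beta^{q^{\pm 2}}$ to be $\neq 1$ (Frobenius is bijective), the orbit never meets the pole of $\rho$, so $\rho^{\circ 3}(\beta)$ is finite and therefore $g(\beta)\neq 0$. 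Then $f(\beta)=(\beta+\rho^{\circ 3}(\beta))\,g(\beta)$ gives $f(\beta)=0\iff\rho^{\circ 3}(\beta)=\beta\iff\beta^{q^{\pm3}}=\beta\iff\beta\in\F_{q^3}$. For $k=0$ the fixed-point condition $\rho(\beta)=\beta$ makes $\rho^{\circ 3}(\beta)=\beta$ automatic, so $f(\beta)=0$ is forced with no $\F_{q^3}$ hypothesis, matching the third bullet.

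Finally I would assemble the three biconditionals. In the forward direction (assume $\beta\in\F_{q^3}$ and $H_k(\beta)=0$) the unpacking step yields the rotation condition, using $\beta\in\F_{q^3}$ to fix the exponent when $k=2$, and the previous paragraph upgrades $\beta\in\F_{q^3}$ to $f(\beta)=0$. In the reverse direction (assume $f(\beta)=0$ and the rotation condition) the previous paragraph yields $\beta\in\F_{q^3}$, and the unpacking step then yields $H_k(\beta)=0$. The only genuine obstacle is the bookkeeping of Frobenius exponents needed to reconcile the stated $\rho(\beta)=\beta^q$ for $k=2$ with the intrinsic $\rho(\beta)=\beta^{1/q^2}$ coming from $H_2$; everything else is routine once the Frobenius-commuting property and Lemma~\ref{numerator} are in hand.
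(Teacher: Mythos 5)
Your proof is correct and follows essentially the same route as the paper's: rewrite $H_k(\beta)=0$ as a Frobenius-twisted fixed-point condition on $\rho$, use the fact that $\rho$ commutes with the $q$-power map to compute $\rho^{\circ 3}(\beta)$ as a Frobenius power of $\beta$, and then invoke Lemma~\ref{numerator} to identify $f(\beta)=0$ with $\beta\in\F_{q^3}$. Your uniform ``unpacking'' identity and the explicit check that the orbit avoids the pole of $\rho$ (so $g(\beta)\ne 0$) are just slightly more systematic packagings of the same argument.
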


\begin{proof}
For $\beta\in\F_{q^3}$ we have $H_2(\beta)=0$ if and only if $(H_2(\beta))^q=a$, i.e., $\beta^{q+2}+\beta^q=a$, or equivalently $\beta^q=a/(\beta^2+1)=\rho(\beta)$.
For $\beta\in\mybar\F_q$ such that $\rho(\beta)=\beta^q$, we have
\[
( \rho\circ\rho\circ\rho ) (\beta) = ( \rho\circ\rho ) (\beta^q) = \rho(\beta^{q^2}) = \beta^{q^3},
\]
so Lemma~\ref{numerator} implies that $f(\beta)=0$ if and only if $\beta\in\F_{q^3}$.
Thus the roots $\beta\in\mybar\F_q$ of $f(X)$ which satisfy $\rho(\beta)=\beta^q$ are precisely the elements $\beta\in\F_{q^3}$ satisfying $\rho(\beta)=\beta^q$, which are the roots in $\F_{q^3}$ of $H_2(X)$.

For $\beta\in\F_{q^3}$ we have $H_1(\beta)=0$ if and only if $\beta=a/(\beta^{2q}+1)$, which upon taking $q$-th roots becomes $\beta^{1/q}=a/(\beta^2+1)=\rho(\beta)$. 
For $\beta\in\mybar\F_q$ such that $\rho(\beta)=\beta^{1/q}$, we have
\[
( \rho\circ\rho\circ\rho ) (\beta) = ( \rho\circ\rho ) (\beta^{1/q}) = \rho(\beta^{1/q^2}) = \beta^{1/q^3},
\]
so Lemma~\ref{numerator} implies that $f(\beta)=0$ if and only if $\beta\in\F_{q^3}$.
Thus the roots $\beta\in\mybar\F_q$ of $f(X)$ which satisfy $\rho(\beta)=\beta^{1/q}$ are precisely the elements $\beta\in\F_{q^3}$ satisfying $\rho(\beta)=\beta^{1/q}$, which are the roots in $\F_{q^3}$ of $H_1(X)$.

Finally, for $\beta\in\mybar\F_q$ we have $H_0(\beta)=0$ if and only if $\beta=a/(\beta^2+1)=\rho(\beta)$, in which case $\beta$ is fixed by $\rho(X)\circ\rho(X)\circ\rho(X)$ so that $f(\beta)=0$.
\end{proof}

We now treat the case $a=1$.

\begin{cor} \label{a=1}
In case $a=1$, we have $N_\ell=3$ if $3\mid m (\ell-m)$, and $N_\ell=0$ otherwise. Moreover, if $3\mid m$ then $\Gamma_\ell$ is the set of roots of $X^3+X+1$, and if $\ell\equiv m\equiv\pm 1\pmod 3$ then $\Gamma_\ell$ is the set of roots of $X^3+X^2+1$.
\end{cor}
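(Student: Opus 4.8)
The plan is to specialize the machinery of Section~\ref{sec:reduce} to $a=1$. Substituting $a=1$ into the definition of $f(X)$ collapses it to $f(X)=X^8+X$, whose roots in $\mybar\F_q$ are precisely the elements of $\F_8=\F_{2^3}$. Since $\F_8\subseteq\F_{q^3}=\F_{2^{3m}}$ for every $m$, all roots of $f$ automatically lie in $\F_{q^3}$, so the membership condition ``$\beta\in\F_{q^3}$'' in Lemma~\ref{twistdeg3} is vacuous here. Because $\beta^{q^3}=\beta$ for $\beta\in\F_{q^3}$, the function $\beta\mapsto H_\ell(\beta)$ on $\F_{q^3}$ depends only on $\ell$ modulo $3$; as $\gcd(\ell,3)=1$ I would replace $\ell$ by $\ell':=\ell\bmod 3\in\{1,2\}$ and study $H_{\ell'}$, using the corresponding clause of Lemma~\ref{twistdeg3}. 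Thus $\Gamma_\ell=\Gamma_{\ell'}$ consists of exactly those $\beta\in\F_8$ with $\rho(\beta)=\beta^q$ (if $\ell'=2$) or $\rho(\beta)=\beta^{1/q}$ (if $\ell'=1$), where now $\rho(X)=1/(X^2+1)$.

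Next I would determine the action of $\rho$ on $\F_8$. Since $X^7+1=(X+1)(X^3+X+1)(X^3+X^2+1)$ over $\F_2$, the six primitive seventh roots of unity split into the root set $A$ of $X^3+X+1$ and the root set $B$ of $X^3+X^2+1$; the remaining elements $0,1$ of $\F_8$ satisfy $\rho(0)=1\ne 0$ and $\rho(1)=\infty$, so neither meets any of the conditions and neither is a root of any $H_k$. For $\beta\in A$ we have $\beta^3=\beta+1$, hence $\beta^2+1=\beta^{-1}$ and $\rho(\beta)=1/(\beta^2+1)=\beta$, so $\rho$ fixes $A$ pointwise (consistently with the $H_0$ clause of Lemma~\ref{twistdeg3}, as $H_0=X^3+X+1$ when $a=1$). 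For $\gamma\in B$ we have $\gamma^3=\gamma^2+1$, hence $\rho(\gamma)=1/\gamma^3=\gamma^{-3}=\gamma^4$, using $\gamma^7=1$; writing $\phi\colon x\mapsto x^2$ for the Frobenius on $\F_8$, this says $\rho(\gamma)=\phi^2(\gamma)=\phi^{-1}(\gamma)$, since $\phi^3=\mathrm{id}$ on $\F_8$. Meanwhile for any $\beta\in\F_8$ we have $\beta^q=\phi^m(\beta)$ and $\beta^{1/q}=\phi^{-m}(\beta)$, and both $A$ and $B$ consist of elements whose $\phi$-orbit has length $3$, being roots of irreducible cubics over $\F_2$.

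Then I would read off the solutions of the Frobenius conditions. For $\ell'=2$ the condition is $\rho(\beta)=\phi^m(\beta)$: on $A$ it reads $\phi^m(\beta)=\beta$, which holds (for all three elements, or none) exactly when $3\mid m$; on $B$ it reads $\phi^{-1}(\gamma)=\phi^m(\gamma)$, i.e.\ $\phi^{m+1}=\mathrm{id}$ on $B$, i.e.\ $m\equiv 2\pmod 3$. For $\ell'=1$ the condition is $\rho(\beta)=\phi^{-m}(\beta)$: on $A$ it again forces $3\mid m$, while on $B$ it reads $\phi^{m-1}=\mathrm{id}$, i.e.\ $m\equiv 1\pmod 3$. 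So $A$ contributes its three elements exactly when $3\mid m$, and $B$ contributes its three elements exactly when $m\equiv\ell'\pmod 3$; for each fixed $\ell'$ these two conditions are mutually exclusive because $\ell'\ne 0$. Collecting cases: if $3\mid m$ then $\Gamma_\ell=A$; if $m\equiv\ell'\pmod 3$ (equivalently $\ell\equiv m\equiv\pm1\pmod 3$) then $\Gamma_\ell=B$; otherwise $\Gamma_\ell=\emptyset$. Hence $N_\ell=3$ precisely when $3\mid m$ or $3\mid(\ell-m)$, i.e.\ when $3\mid m(\ell-m)$, and the stated descriptions follow from the facts that $A$ is the root set of $X^3+X+1$ and $B$ is the root set of $X^3+X^2+1$.

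The two evaluations of $\rho$ on $A$ and $B$ are one-line computations, so the main thing to get right is the bookkeeping that turns the three reductions ($\ell\to\ell'$, $\beta^q\to\phi^m$, and the $\phi$-orbit lengths) into the single clean criterion $3\mid m(\ell-m)$; in particular one must keep straight the direction of the inverse Frobenius $\rho|_B=\phi^{-1}$ so that the $\ell'=1$ and $\ell'=2$ cases land on the correct residues of $m$ modulo $3$.
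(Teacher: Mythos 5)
Your proposal is correct and follows essentially the same route as the paper: both reduce via Lemma~\ref{twistdeg3} and the specialization $f(X)=X^8+X$ to the elements of $\F_8\setminus\F_2$, split these into the root sets of $X^3+X+1$ and $X^3+X^2+1$, and arrive at the same congruences on $m$ and $\ell$ modulo $3$. The only cosmetic difference is in the converse direction, where the paper evaluates $H_\ell(\beta)$ directly using the order-$7$ relation $\beta^{q^\ell-1}=1$ (resp.\ $\beta^{4q^\ell-1}=1$), while you instead compute that $\rho$ acts as the identity on one cubic's roots and as $\phi^{-1}$ on the other's and compare with $\phi^{\pm m}$ --- both computations are correct and equivalent.
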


\begin{proof}
First suppose that $\beta\in\F_{q^3}$ is a root of $H_\ell(X)$, so that $\beta\notin\F_2$. 
By Lemma~\ref{twistdeg3} we know that $\beta$ is a root of $f(X)$, which is $X^8+X$ since $a=1$. 
Thus $\beta\in \F_8\setminus\F_2$, so that $\beta$ is a primitive $7$-th root of unity and $(\beta^3+\beta+1)(\beta^3+\beta^2+1)=0$.

Conversely, if $\beta^3+\beta=1$ then $H_\ell(\beta)=0$ if and only if $\beta^{2q^\ell+1}=\beta^3$, or equivalently $\beta^{q^\ell-1}=1$. 
Since $\beta$ has order $7$, this says $2^{\ell m}\equiv 1\pmod{7}$, or equivalently $3\mid m$.

If $\beta^3+\beta^2=1$ then $H_\ell(\beta)=0$ if and only if $H_\ell(\beta)^2=0$, or equivalently $\beta^{4q^\ell+2}=\beta^3$, i.e., $\beta^{4q^\ell-1}=1$. 
Since $\beta$ has order $7$, this says $2^{\ell m+2}\equiv 1\pmod{7}$, or equivalently $\ell m\equiv 1\pmod{3}$, i.e., $\ell\equiv m\equiv\pm 1\pmod{3}$.

Since all order-$7$ elements in $\mybar\F_q^*$ are contained in $\F_{q^3}$, the result follows.
\end{proof}

\begin{lemma} \label{factorization}
Suppose $a\ne 1$. Then $f(X) = f_0(X) f_1(X) f_2(X)$, and $f(X)$ has nine distinct roots in\/ $\mybar\F_q$. 
Moreover, $\rho(X)$ fixes each root of $f_0(X)$, and $\rho(X)$ acts as a $3$-cycle on the roots of $f_i(X)$ for $i\in\{1,2\}$.
\end{lemma}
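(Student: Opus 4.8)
The plan is to combine the explicit factorization with the dynamics of $\rho$. First I would record the factorization $f = f_0 f_1 f_2$ by direct expansion, using the defining relation $a(b^2+b+1)=1$ (equivalently $ab(b+1)=a+1$) to reconcile coefficients; the leading-coefficient check $1\cdot ab^2\cdot a(b+1)^2=(ab(b+1))^2=(a+1)^2=a^2+1$ is indicative, and the remaining coefficients follow from the same relation. I would also record that $a\ne 1$ forces $b\notin\F_2$ (if $b\in\{0,1\}$ then $b^2+b+1=1$, giving $a=1$), which will be used below.

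For the distinctness of the nine roots, I would observe that in characteristic $2$ we have $f'(X)=(a^2+1)X^8+(a^4+a^2+1)$, which is precisely the polynomial $g(X)$ appearing in the proof of Lemma~\ref{numerator}. Since that lemma asserts $\gcd(f,g)=1$, we get $\gcd(f,f')=1$, so $f$ is separable and has nine distinct roots in $\mybar\F_q$. Because $\deg(f_0 f_1 f_2)=9$ equals this number of distinct roots, each $f_i$ must have three distinct roots and the three root sets must be pairwise disjoint.

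To locate the action of $\rho$, I would use Lemma~\ref{numerator} once more: every root $\beta$ of $f$ satisfies $g(\beta)\ne 0$ and hence is fixed by $\rho\circ\rho\circ\rho$. A direct manipulation gives $\rho(\beta)=\beta\iff\beta^3+\beta+a=0\iff f_0(\beta)=0$, so $\rho$ fixes each root of $f_0$, and by disjointness these are the \emph{only} roots of $f$ fixed by $\rho$. The crucial input for $f_1$ and $f_2$ is the pair of identities
\[
(X+1)^6\, f_i(\rho(X)) = f_i(X)^2 \qquad (i=1,2),
\]
which I would verify for $i=1$ by clearing denominators and expanding (again using $a(b^2+b+1)=1$), and then deduce for $i=2$ from the substitution $b\mapsto b+1$, which leaves $\rho$ unchanged, preserves the relation since $(b+1)^2+(b+1)=b^2+b$, and sends $f_1$ to $f_2$. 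Since no root $\beta$ of $f$ equals $1$ (indeed $f(1)=a^4(a+1)\ne 0$), these identities show that $\rho$ maps the root set of each $f_i$ into itself.

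Finally I would assemble the permutation argument. Fix $i\in\{1,2\}$ and let $S$ be the three-element set of roots of $f_i$. The previous step shows $\rho(S)\subseteq S$, while $\rho\circ\rho\circ\rho$ acts as the identity on $S$; hence $\rho|_S$ is a bijection (with inverse $\rho\circ\rho|_S$) and it is fixed-point-free, because the only roots of $f$ fixed by $\rho$ are the roots of $f_0$, which are disjoint from $S$. A fixed-point-free permutation of a three-element set whose cube is the identity is a $3$-cycle, giving the claim. I expect the main obstacle to be the discovery and verification of the invariance identity $(X+1)^6 f_i(\rho(X))=f_i(X)^2$: this is exactly what distinguishes the two nontrivial orbits and pins each to the correct cubic factor, since without it one knows only that the six non-fixed roots split into two $3$-cycles, without identifying those cycles with the root sets of $f_1$ and $f_2$.
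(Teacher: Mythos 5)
Your proof is correct and follows essentially the same route as the paper: verify the factorization $f=f_0f_1f_2$ and the separability of $f$, show that $\rho$ fixes the roots of $f_0$ and stabilizes the root set of each $f_i$, then invoke the fact that $\rho\circ\rho\circ\rho$ fixes every root of $f$ (Lemma~\ref{numerator}) to force a fixed-point-free, hence $3$-cycle, action on each three-element root set. Your two refinements --- noting that $f'(X)$ is exactly the polynomial $g(X)$ from the proof of Lemma~\ref{numerator}, so separability comes for free from the coprimality established there, and upgrading the paper's divisibility claim to the exact identity $(X+1)^6 f_i(\rho(X))=f_i(X)^2$ together with the $b\mapsto b+1$ symmetry exchanging $f_1$ and $f_2$ --- are both correct (the identity checks out coefficient by coefficient using $a(b^2+b+1)=1$) and are arguably cleaner than the paper's ``routine to verify.''
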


\begin{proof}
It is routine to verify that $f(X) = f_0(X) f_1(X) f_2(X)$ and $f(X)$ is a degree-$9$ polynomial with no multiple roots, so by Lemma~\ref{twistdeg3} we conclude that $\rho(X)$ fixes all roots of $f_0(X)$ but does not fix any roots of $f_i(X)$ when $i\in\{1,2\}$. 
Moreover, one can check that for each $i\in\{1,2\}$ the polynomial $f_i(X)$ divides the numerator of $f_i(X)\circ\rho(X)$, so that $\rho(X)$ maps each root of $f_i(X)$ to a root of $f_i(X)$. 
By Lemma~\ref{numerator}, the roots of $f(X)$ are fixed by $\rho(X)\circ\rho(X)\circ\rho(X)$, so that $\rho(X)$ induces a $3$-cycle on the roots of $f_i(X)$ for each $i\in\{1,2\}$.
\end{proof}

\begin{comment}
% Proof via Magma of various assertions in the above proof:

K<b>:=FunctionField(GF(2));
a:=1/(b^2+b+1);
_<X>:=PolynomialRing(K);
f:=(a^2+1)*X^9+a*X^8+(a^4+a^2+1)*X+(a^5+a);
f0:=X^3+X+a;
f1:=a*b^2*X^3+X^2+a*(b+1)^2*X+a^2*b^4;
f2:=a*(b+1)^2*X^3+X^2+a*b^2*X+a^2*(b+1)^4;
f eq f0*f1*f2;

(b^4+b^2)*f + Derivative(f)*((b^4+b^2)*X+b^2+b+1) eq a^5;
// so f has no multiple roots

B:=[1/a^2*(X^2+1)^3*Evaluate(fi,a/(X^2+1))/fi:fi in [f0,f1,f2]];
{Denominator(i) eq 1:i in B};
B:=[Numerator(i):i in B];
{{Denominator(i) eq 1:i in Coefficients(g)}:g in B};

%
\end{comment}

\begin{lemma} \label{f0}
If $f_0(X)$ is irreducible over\/ $\F_q$ then $f_0(X)$ does not divide $H_k(X)$ for any $k\in\{1,2\}$.
\end{lemma}

\begin{proof}
Suppose otherwise. Then $f_0(X)$ divides both $X^{q^3}-X$ and $G(X):=f_0(X)+H_k(X)=X^{2q^k+1}+X^3=X^3(X^{q^k-1}-1)^2$.
Thus $f_0(X)$ divides $\gcd(G(X),X^{q^3}-X)=X\gcd(X^{q^k-1}-1,X^{q^3-1}-1)$, which is $X^q-X$ by two applications of Lemma~\ref{gcd}. But this contradicts irreducibility of $f_0(X)$.
\end{proof}

\begin{rmk}
Alternately, Lemma~\ref{f0} may be deduced from Lemma~\ref{twistdeg3}.
\end{rmk}

\begin{comment}
Here are the details:

If $f_0(X)$ is irreducible over $\F_q$, then any root $\beta$ of $f_0(X)$ is in $\F_{q^3}\setminus\F_q$. Since
$f_0(X)=H_0(X)$, Lemma~\ref{twistdeg3} implies that $\rho(\beta)=\beta$, so since $\beta\notin\F_q$ we conclude 
that $\rho(\beta)\notin\{\beta^q,\beta^{1/q}\}$. By Lemma~\ref{twistdeg3}, it follows that $H_r(\beta)\ne 0$ 
for $r\in\{1,2\}$.
\end{comment}

We now prove Proposition~\ref{fi}.

\begin{proof}[Proof of Proposition~\emph{\ref{fi}}]
First let $g(X)$ be an irreducible degree-$3$ polynomial in $\F_q[X]$ which divides $H_k(X)$ for some $k\in\{1,2\}$. Then $g(X)$ divides $f(X)$ in light of Lemma~\ref{twistdeg3}. 
By Lemma~\ref{factorization}, $f(X)=f_0(X)f_1(X)f_2(X)$ where the $f_i(X)$'s are pairwise coprime. By definition, each $f_i(X)$ is a degree-$3$ polynomial in $\F_{q^2}[X]$. 
Since $g(X)$ is irreducible over $\F_q$ and $\deg(g)$ is odd, we see that $g(X)$ is also irreducible over $\F_{q^2}$, so there is a unique $i\in\{0,1,2\}$ for which $g(X)$ divides $f_i(X)$, and thus $g(X)$ is a constant times $f_i(X)$. Finally, Lemma~\ref{f0} shows that $i\ne 0$.

Conversely, pick $i\in\{1,2\}$ and suppose that $f_i(X)$ is irreducible over $\F_q$. By Lemma~\ref{factorization}, $f_i(X)$ divides $f(X)$, and also $\rho(X)$ acts as a $3$-cycle on the roots of $f_i(X)$. 
Since the two $3$-cycles on the roots of $f_i(X)$ are induced by the $q$-th power map and the $1/q$-th power map, it follows that $\rho(X)$ acts on the roots of $f_i(X)$ in the same way as exactly one of these two maps. 
Then Lemma~\ref{twistdeg3} implies that $f_i(X)$ divides $H_k(X)$ for exactly one $k\in\{1,2\}$.
\end{proof}

%#######################################################################
%#######################################################################

\section{Proof of Theorem~\ref{main}}

In this section we prove Theorem~\ref{main}. We use the notation from Section~\ref{sec:notation}, in addition to requiring $a\ne 1$; since $b^2+b+1=1/a$, it follows that $b\notin\F_4$. In particular we have $c:=(b+\omega)/(b+\omega^2)$.

In light of Proposition~\ref{fi}, we first determine how the polynomials $f_i(X)$ factor over $\F_q$.

\begin{lemma} \label{irr}
For any $i\in\{0,1,2\}$, the polynomial $f_i(X)$ has three distinct roots in\/ $\F_q$ if $\omega^i c$ is a cube in\/ $\F_{q^2}$, and $f_i(X)$ is irreducible in\/ $\F_q[X]$ otherwise.
\end{lemma}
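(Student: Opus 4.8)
The plan is to reduce each $f_i(X)$ to a monic cubic of the shape $Y^3+Y+a_i'$ with $a_i'\in\F_q^*$, and then to read off the factorization type from Lemma~\ref{W} and Lemma~\ref{f0roots}. The polynomial $f_0(X)=X^3+X+a$ is already of this form with $a_0'=a$. For $f_1$ I would substitute $X=(Y+a^{-1})/b^2$ and verify the polynomial identity
\[
f_1\Bigl(\frac{Y+a^{-1}}{b^2}\Bigr)=\frac{a}{b^4}\bigl(Y^3+Y+ab^2\bigr),
\]
so that $a_1'=ab^2$; likewise the substitution $X=(Y+a^{-1})/(b+1)^2$ should turn $f_2$ into a nonzero scalar multiple of $Y^3+Y+a(b+1)^2$, giving $a_2'=a(b+1)^2$. (One can avoid the second computation by noting that the involution $b\mapsto b+1$ fixes $a$, since $1/a=b^2+b+1=(b+\omega)(b+\omega^2)$, and interchanges $f_1$ with $f_2$ while sending $c\mapsto 1/c$.) Here $a\ne1$ forces $b\notin\F_4$, so $b,b+1,b+\omega,b+\omega^2$ are all nonzero and every scalar above lies in $\F_q^*$. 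Since affine substitutions and nonzero scalings preserve both the number of roots in $\F_q$ and irreducibility over $\F_q$, it suffices to analyze $Y^3+Y+a_i'$.

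Next I would apply Lemma~\ref{W} to $Y^3+Y+a_i'$, for which the relevant element is $e_i\in\F_{q^2}^*$ with $e_i^2+a_i'e_i+1=0$, i.e.\ $e_i+e_i^{-1}=a_i'$. The key identities to verify are
\[
\omega^i c+(\omega^i c)^{-1}=a_i'\qquad(i=0,1,2),
\]
which for $i=0$ is the computation $c+c^{-1}=(b+\omega)/(b+\omega^2)+(b+\omega^2)/(b+\omega)=a$ (using $(b+\omega)^2+(b+\omega^2)^2=\omega^2+\omega=1$ and $(b+\omega)(b+\omega^2)=1/a$), and for $i=1,2$ amount to $\omega c+\omega^2c^{-1}=ab^2$ and $\omega^2 c+\omega c^{-1}=a(b+1)^2$. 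Granting these, the quadratic $e^2+a_i'e+1$ has roots $\omega^i c$ and $(\omega^i c)^{-1}$, so $e_i\in\{\omega^i c,(\omega^i c)^{-1}\}$; since an element of $\F_{q^2}^*$ is a cube exactly when its inverse is, the cube criterion in Lemma~\ref{W} becomes ``$\omega^i c$ is a cube in $\F_{q^2}$''.

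It remains to rule out the case $N=1$ of Lemma~\ref{W}, i.e.\ to check $\Tr((a_i')^{-2})=\Tr(1)$ for each $i$; since $\Tr(x^2)=\Tr(x)$ this is the same as $\Tr(1/a_i')=\Tr(1)$, which I would verify using $1/a=b^2+b+1$. For instance $1/(ab^2)=(b^2+b+1)/b^2=1+b^{-1}+b^{-2}$, whence $\Tr(1/(ab^2))=\Tr(1)$, and similarly for $a$ and $a(b+1)^2$. Thus each count $N$ lies in $\{0,3\}$, equalling $3$ precisely when $\omega^i c$ is a cube in $\F_{q^2}$. Finally, when $N=3$ the three roots are automatically distinct (as $f(X)$ has nine distinct roots by Lemma~\ref{factorization}), yielding the ``three distinct roots in $\F_q$'' conclusion; and when $N=0$ the cubic $f_i(X)$ has no root in $\F_q$ and is therefore irreducible over $\F_q$.

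The main obstacle is purely computational: confirming the substitution identities that turn $f_1,f_2$ into depressed cubics, and the relations $\omega^i c+(\omega^i c)^{-1}=a_i'$. These are routine but require careful bookkeeping over $\F_4$ and $\F_{q^2}$ (repeatedly invoking $\omega^2=\omega+1$, $\omega^3=1$, $\omega^{-1}=\omega^2$, and $(b+\omega)(b+\omega^2)=b^2+b+1=1/a$); exploiting the symmetry $b\mapsto b+1$ halves this labor by deducing the $f_2$ case from the $f_1$ case.
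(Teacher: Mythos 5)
Your proof is correct and takes essentially the same route as the paper's: normalize each $f_i$ to a depressed cubic, apply Lemma~\ref{W}, verify the trace condition, and identify a root of the auxiliary quadratic with $\omega^i c$ up to harmless factors (I checked your identities $f_1\bigl((Y+a^{-1})/b^2\bigr)=\tfrac{a}{b^4}(Y^3+Y+ab^2)$ and $\omega^i c+(\omega^i c)^{-1}=a_i'$, and they hold). Your extra normalization of the linear coefficient to $1$ and the use of the $b\mapsto b+1$ symmetry are cosmetic improvements over the paper's direct computation of $g_1$ and $g_2$, where the quadratic's root is $\omega^{\pm i}c^{\pm 1}$ times an explicit sixth power rather than exactly $\omega^i c$ or its inverse.
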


\begin{proof}
Plainly each $f_i(X)$ is a degree-$3$ polynomial in $\F_q[X]$. Write $u_i$ and $v_i$ for the coefficients of $X^3$ and $X^2$ in $f_i(X)$, respectively. 
Then $f_i(X)$ has the same number of roots in $\F_q$ as does $g_i(X):= u_i^{-1} f_i(X+u_i^{-1}v_i)$. Here $g_0(X)=f_0(X)$, and we compute
\[
g_1(X) = X^3 + \frac{1}{b^4} X + \frac{1}{b^4(b^2+b+1)}
\]
and
\[
g_2(X) = X^3 + \frac{1}{b^4+1}X+\frac{1}{(b^4+1)(b^2+b+1)}.
\]
Write $a_i$ and $b_i$ for the coefficients of the terms of $g_i(X)$ of degrees $1$ and $0$, respectively, and write $c_i:=a_i^3/b_i^2$. Then
\begin{align*}
c_0 &= 1+b^2+b^4, \\
c_1 &= 1+\frac{1}{b^2}+\frac{1}{b^4}, \\
c_2 &= 1+\frac{1}{b^2+1}+\frac{1}{b^4+1}.
\end{align*}
Note that $c_i=1+d_i+d_i^2$ where $d_0:=b^2$, $d_1:=1/b^2$, and $d_2:=1/(b^2+1)$. Thus $\Tr(c_i)=\Tr(1)$, so that $f_i(X)$ has either zero or three roots in $\F_q$ by Lemma~\ref{W}. 
Then one root of $h_i(X):=X^2+b_iX+a_i^3$ is $e_i:=b_i(d_i+\omega)$, and we compute
\begin{align*}
e_0 &= \frac{b^2+\omega}{b^2+b+1} = \Big( \frac{b+\omega}{b+\omega^2} \Big)^{-1} = c^{-1}, \\
e_1 &= \frac{1}{b^6}\cdot\frac{\omega b^2+1}{b^2+b+1} = 
\frac{1}{b^6}\cdot\Big(\omega\frac{b+\omega}{b+\omega^2}\Big) = \frac{\omega c}{b^6}, \\
e_2 &= \frac{1}{(b+1)^6}\cdot\frac{\omega b^2+\omega^2}{b^2+b+1} =
\frac{1}{(b+1)^6}\cdot\Big(\omega^2\frac{b+\omega}{b+\omega^2}\Big)^{-1} = \frac{(\omega^2 c)^{-1}}{(b+1)^6}.
\end{align*}
Since $b^6$ and $(b+1)^6$ are nonzero cubes in $\F_{q^2}$, the result follows from Lemma~\ref{W}.
%
\begin{comment}
% Proof via Magma of various assertions in the above proof:

K<w>:=GF(4);
L<e>:=FunctionField(K);
_<x>:=PolynomialRing(L);
fis:=[x^3 + x + e^2/(e^2+e+1),
x^3 + x^2*(e^2+e+1)/(e^2+1) + x/(e^2+1) + (e^2+1)/(e^2+e+1),
x^3 + (e^2+e+1)*x^2 + (e^2+1)*x + 1/(e^2+e+1)];
gis:=[Evaluate(f,x+Coefficient(f,2)):f in fis];
gis eq [fis[1], x^3+e^4/(e^4+1)*x+e^6/((e^4+1)*(e^2+e+1)), x^3+e^4*x+e^6/(e^2+e+1)];
ais:=[Coefficient(g,1):g in gis];
bis:=[Coefficient(g,0):g in gis];
cis:=[ais[k]^3/bis[k]^2:k in [1..3]];
cis eq [1+1/e^2+1/e^4, 1+1/(e^2+1)+1/(e^4+1), 1+e^2+e^4];
his:=[x^2+bis[k]*x+ais[k]^3:k in [1..3]];
sis:=[1/e^2, 1/(e^2+1), e^2];
{cis[k] eq 1+sis[k]+sis[k]^2:k in [1..3]};
dis:=[bis[k]*(w+sis[k]):k in [1..3]];
dis eq [w*(e+w)/(e+w^2), w*e^6/(e+1)^6 * (e+w^2)/(e+w), e^6*(e+w^2)/(e+w)];
{Evaluate(his[k],dis[k]) eq 0:k in [1..3]};

\end{comment}
%
\end{proof}

\begin{cor} \label{irr2}
The following hold:
\begin{itemize}
\item if $3\mid m$ then either all three polynomials $f_i(X)$ are irreducible over\/ $\F_q$ or all three $f_i(X)$'s have three distinct roots in\/ $\F_q$;
\item If $3\nmid m$ then one of $f_0(X)$, $f_1(X)$, and $f_2(X)$ has three distinct roots in\/ $\F_q$ and the other two $f_i(X)$'s are irreducible over $\F_q$.
\end{itemize}
\end{cor}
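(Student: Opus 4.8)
The plan is to reduce the entire statement, via Lemma~\ref{irr}, to a single question about $\omega$. That lemma says $f_i(X)$ has three distinct roots in $\F_q$ exactly when $\omega^i c$ is a cube in $\F_{q^2}$, and is irreducible otherwise. Hence the factorization behavior of $f_0,f_1,f_2$ is governed entirely by the cube/non-cube status of the three elements $c,\,\omega c,\,\omega^2 c$, which differ from one another only by the factor $\omega$. The whole corollary should therefore follow from understanding whether $\omega$ itself is a cube in $\F_{q^2}$.

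First I would record that $3\mid q^2-1$ always: since $q=2^m\equiv(-1)^m\pmod 3$ we get $q^2\equiv 1\pmod 3$. Thus the cubes form a subgroup of index exactly $3$ in the cyclic group $\F_{q^2}^*$, and passing to the quotient $\F_{q^2}^*/(\F_{q^2}^*)^3$, which is cyclic of order $3$, converts ``is a cube'' into ``lies in the identity coset''. The images of $c,\,\omega c,\,\omega^2 c$ in this quotient are $\bar c,\,\bar\omega\,\bar c,\,\bar\omega^2\,\bar c$, so the decisive point is whether $\bar\omega$ is trivial, i.e.\ whether $\omega$ is a cube in $\F_{q^2}$.

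The key computation is the following. As $\omega$ has order $3$ in $\F_{q^2}^*$, it is a cube if and only if $\omega^{(q^2-1)/3}=1$, i.e.\ if and only if $9\mid q^2-1$, which is equivalent to saying that $\F_{q^2}$ contains an element of order $9$. Since the multiplicative order of $2$ modulo $9$ equals $6$, the elements of order $9$ in $\mybar\F_q^*$ lie precisely in $\F_{2^6}$, so they belong to $\F_{q^2}=\F_{2^{2m}}$ if and only if $6\mid 2m$, i.e.\ if and only if $3\mid m$. Hence $\omega$ is a cube in $\F_{q^2}$ exactly when $3\mid m$. This order-$9$ step is the only place requiring genuine care, and I expect it to be the main obstacle; everything surrounding it is bookkeeping.

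Finally I would assemble the dichotomy. If $3\mid m$ then $\bar\omega$ is trivial, so $c,\,\omega c,\,\omega^2 c$ all lie in the same coset and are therefore simultaneously all cubes or all non-cubes; by Lemma~\ref{irr} the three $f_i(X)$ are then either all split with three distinct roots in $\F_q$, or all irreducible over $\F_q$. If instead $3\nmid m$ then $\bar\omega$ is a nontrivial element of the order-$3$ quotient, hence a generator, so $\{\bar c,\,\bar\omega\,\bar c,\,\bar\omega^2\,\bar c\}$ runs over all three cosets; consequently exactly one of $c,\,\omega c,\,\omega^2 c$ is a cube in $\F_{q^2}$, and by Lemma~\ref{irr} exactly one of $f_0(X),f_1(X),f_2(X)$ has three distinct roots in $\F_q$ while the other two are irreducible over $\F_q$.
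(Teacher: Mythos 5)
Your proposal is correct and follows essentially the same route as the paper: both reduce everything to Lemma~\ref{irr}, identify the condition ``$\omega$ is a cube in $\F_{q^2}$'' with $9\mid q^2-1$, i.e.\ $3\mid m$ via the fact that $2$ has order $6$ modulo $9$, and then run the coset argument in $\F_{q^2}^*/(\F_{q^2}^*)^3$. The only cosmetic difference is that you phrase the order-$9$ step through elements of order $9$ in $\F_{2^6}$ rather than directly through $q^2\equiv 1\pmod 9$, which amounts to the same computation.
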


\begin{proof}
Note that $3\mid m$ if and only if $6\mid (2m)$, which is equivalent to $q^2\equiv 1\pmod{9}$ since the order of $2$ mod $9$ is $6$. It follows that $3\mid m$ if and only if $\omega$ is a cube in $\F_{q^2}$. 
Thus if $3\mid m$ then the three values $\omega^i c$ with $i\in\{0,1,2\}$ are either all cubes in $\F_{q^2}^*$ or all non-cubes in $\F_{q^2}^*$, which by Lemma~\ref{irr} says that either all three $f_i(X)$'s have three distinct roots in $\F_q$ or all three $f_i(X)$'s are irreducible over $\F_q$. 
Henceforth assume $3\nmid m$. Then $\omega$ is a non-cube in $\F_{q^2}$, so that the three values $\omega^i c$ with $i\in\{0,1,2\}$ lie in three distinct cosets of $\F_{q^2}^*/(\F_{q^2}^*)^3$. 
Hence exactly one of these values is a cube in $\F_{q^2}$, so Lemma~\ref{irr} says that exactly one $f_i(X)$ has three distinct roots in $\F_q$ while the other two $f_i(X)$'s are irreducible over $\F_q$.  
\end{proof}

We now prove the following variant of Theorem~\ref{main}.

\begin{thm} \label{maincubic}
Using the notation of Section~\emph{\ref{sec:notation}}, where in addition we assume $a\ne 1$, both of the following hold:
\begin{enumerate}
\item if $m\not\equiv -\ell\pmod 3$ then $N_\ell=3$;
\item if $m\equiv -\ell\pmod{3}$ then $N_\ell\in\{0,9\}$, where $N_\ell=9$ if and only if $c$ is a cube in\/ $\F_{q^2}$.
\end{enumerate}
Moreover, $\Gamma_\ell$ is the set of roots of $F(X)$, where $F(X)$ is as follows:
\begin{itemize}
\item if $m\equiv 0\pmod 3$ and $c$ is a cube in\/ $\F_{q^2}$ then $F(X)=f_0(X)$;
\item if $m\equiv 0\pmod 3$ and $c$ is not a cube in\/ $\F_{q^2}$ then $F(X)=f_i(X)$ for some $i\in\{1,2\}$;
\item if $m\equiv \ell\pmod 3$ then there is a unique $i\in\{0,1,2\}$ for which $\omega^i c$ is a cube in\/ $\F_{q^2}$; if $i=0$ then $F(X)=f_0(X)$, and otherwise $F(X)=f_{3-i}(X)$;
\item if $N_\ell=9$ then $F(X)=f_0(X)f_1(X)f_2(X)$.
\end{itemize}
\end{thm}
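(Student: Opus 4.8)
The plan is to reduce everything to the auxiliary polynomials $H_1(X)$ and $H_2(X)$, to split the roots of the relevant $H_k$ in $\F_{q^3}$ according to whether they lie in $\F_q$ or in $\F_{q^3}\setminus\F_q$, and then to run a counting argument against Corollary~\ref{Ni}. First I would observe that for $\beta\in\F_{q^3}$ the Frobenius $x\mapsto x^q$ has order dividing $3$, so $\beta^{q^\ell}=\beta^{q^{\ell'}}$, where $\ell'\in\{1,2\}$ denotes the residue of $\ell$ modulo~$3$; hence $H_\ell(\beta)=H_{\ell'}(\beta)$ and $\Gamma_\ell=\Gamma_{\ell'}$. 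This replaces $H_\ell$ by an $H_k$ with $k\in\{1,2\}$, which is exactly the setting of Proposition~\ref{fi} and Lemma~\ref{f0}. Next I would decompose $\Gamma_{\ell'}$: for $\beta\in\F_q$ one has $H_{\ell'}(\beta)=f_0(\beta)$, so the roots of $H_{\ell'}$ in $\F_q$ are precisely the roots of $f_0$ in $\F_q$, of which there are $\delta$, where by Lemma~\ref{irr} we have $\delta=3$ if $c$ is a cube in $\F_{q^2}$ and $\delta=0$ otherwise. A root $\beta\in\F_{q^3}\setminus\F_q$ has an irreducible degree-$3$ minimal polynomial over $\F_q$ dividing $H_{\ell'}$, which by Proposition~\ref{fi} is a scalar multiple of $f_1$ or $f_2$, and conversely each irreducible $f_i$ ($i\in\{1,2\}$) dividing $H_{\ell'}$ contributes its three roots to $\Gamma_{\ell'}$. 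A point worth emphasizing is that a \emph{split} $f_i$ with $i\in\{1,2\}$ contributes nothing, since its $\F_q$-roots are not roots of $f_0$ (the $f_i$ are pairwise coprime by Lemma~\ref{factorization}) and hence not roots of $H_{\ell'}$.

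The counting engine is then the identity $N_k=\delta+3t_k$ for $k\in\{1,2\}$, where $t_k$ is the number of irreducible $f_i$ ($i\in\{1,2\}$) dividing $H_k$; in particular $N_k\equiv 0\pmod 3$. Intersecting with Corollary~\ref{Ni}, the index $k^*\in\{1,2\}$ with $m\equiv-k^*\pmod 3$ (which exists precisely when $3\nmid m$) must satisfy $N_{k^*}\in\{0,1,2,9\}\cap 3\Z=\{0,9\}$, while every other $N_k$ lies in $\{0,1,3\}\cap 3\Z=\{0,3\}$. Since by Proposition~\ref{fi} each irreducible $f_i$ divides a unique $H_k$, summing gives the global count $N_1+N_2=2\delta+3r$, where $r$ is the number of irreducible $f_i$ among $i\in\{1,2\}$. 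I would read off $\delta$ and $r$ from Lemma~\ref{irr} and Corollary~\ref{irr2} in each of the mutually exclusive cases $m\equiv 0,\ \ell,\ -\ell\pmod 3$ (these exhaust the possibilities because $3\nmid\ell$), and then solve for the individual $N_k$.

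Carrying this out: when $m\equiv-\ell$ we have $\ell'=k^*$, so $N_\ell=N_{k^*}\in\{0,9\}$; here $N_{k^*}=\delta+3t_{k^*}=9$ forces $\delta=3$ and $t_{k^*}=r=2$ (so $c$ is a cube and both $f_1,f_2$ are irreducible and divide $H_{k^*}$), yielding statement~(2) with $F=f_0f_1f_2$, while $\delta=0$ forces $N_\ell=0$ and $\Gamma_\ell=\emptyset$. When $3\mid m$ one finds $\delta=3,r=0$ (if $c$ is a cube) or $\delta=0,r=2$ (if not), so $N_1+N_2=6$ and the $\{0,3\}$-constraint forces $N_1=N_2=3$; when $m\equiv\ell$ one finds $N_{k^*}\in\{0,9\}$ as above and deduces $N_{k^{**}}=3$ from $N_1+N_2$, so in both cases $N_\ell=3$. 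Finally, identifying the unique surviving factor through Lemma~\ref{irr} produces the asserted $F(X)$ in each bullet. The main obstacle is deciding, for an irreducible $f_i$, \emph{which} of $H_1,H_2$ it divides; the key is to avoid a direct comparison of the $\rho$-action with Frobenius and instead let the divisibility fall out of arithmetic, the congruence $N_k\equiv 0\pmod 3$ collapsing the four admissible classes of $N_{k^*}$ to $\{0,9\}$ and the global count $N_1+N_2=2\delta+3r$ then distributing the factors unambiguously. The rest is bookkeeping: matching each $(\delta,r)$-subcase to the correct bullet and confirming that $\Gamma_\ell=\emptyset$ occurs exactly when $m\equiv-\ell$ and $c$ is a non-cube.
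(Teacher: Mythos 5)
Your proposal is correct and follows essentially the same route as the paper: both reduce $\Gamma_\ell$ to the factors $f_0,f_1,f_2$ via Proposition~\ref{fi}, read off which $f_i$ are irreducible from Lemma~\ref{irr} and Corollary~\ref{irr2}, and pin down the counts against Corollary~\ref{Ni}. The only difference is organizational — you package the count as the exact identity $N_k=\delta+3t_k$ (hence $N_k\equiv 0\pmod 3$) and solve, whereas the paper argues case by case with the inequalities $N_1+N_2\ge 6$ or $\ge 12$ against the upper bounds from Corollary~\ref{Ni} — and both yield the same identification of $F(X)$ in each bullet.
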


\begin{proof}
First suppose that $m\equiv 0\pmod 3$, so that $N_1,N_2\in\{0,1,3\}$ by Corollary~\ref{Ni}.  
If $c$ is a cube in $\F_{q^2}$ then $f_0(X)$ has three roots in $\F_q$ by Lemma~\ref{irr}; since each such root is a root of $H_\ell(X)$, the condition $N_\ell\le 3$ implies that $N_\ell=3$ and $\Gamma_\ell$ is the set of roots of $f_0(X)$. 
If $c$ is not a cube in $\F_{q^2}$ then, by Lemma~\ref{irr} and Corollary~\ref{irr2}, both $f_1(X)$ and $f_2(X)$ are irreducible over $\F_q$. 
Since $f_1(X)$ is not a constant multiple of $f_2(X)$, Proposition~\ref{fi} implies that in this case $f_1(X)f_2(X)$ divides $H_1(X)H_2(X)$, so that $N_1+N_2\ge 6$.
Since $N_1,N_2\le 3$, it follows that $N_1=N_2=3$, so that $N_\ell=3$, and also that $\Gamma_\ell$ is the set of roots of $f_i(X)$ for some $i\in\{1,2\}$.

Now suppose that $m\equiv k\pmod{3}$ for some $k\in\{1,2\}$. Then Corollary~\ref{Ni} implies that $N_k\in\{0,1,3\}$ and $N_{3-k}\in\{0,1,2,9\}$. 
By Lemma~\ref{irr} and Corollary~\ref{irr2}, there is exactly one $i\in\{0,1,2\}$ for which $\omega^i c$ is a cube in $\F_{q^2}$, and then $f_i(X)$ has three distinct roots in $\F_q$ while $f_j(X)$ is irreducible over $\F_q$ for each $j\ne i$.
Plainly the roots in $\F_q$ of each of $H_1(X)$ and $H_2(X)$ are the roots in $\F_q$ of $f_0(X)$. Thus if $i=0$ then $H_1(X)$ and $H_2(X)$ each have three roots in $\F_q$.
But if $i=0$ then Proposition~\ref{fi} implies that $f_1(X)$ and $f_2(X)$ divide $H_1(X)H_2(X)$. Since $f_1(X)$ is not a constant multiple of $f_2(X)$, it follows that $N_1+N_2\ge 12$, whence $N_k=3$ and $N_{3-k}=9$. 
Moreover, we have shown that $\Gamma_k$ and $\Gamma_{3-k}$ are the sets of roots of $f_0(X)$ and $f_0(X)f_1(X)f_2(X)$, respectively. Henceforth suppose that $i\in\{1,2\}$, so that $f_0(X)$ is irreducible over $\F_q$ and thus $H_1(X)$ and $H_2(X)$ have no roots in $\F_q$.  
By Proposition~\ref{fi}, it follows that the roots in $\F_{q^3}$ of $H_1(X)H_2(X)$ are precisely the roots of $f_{3-i}(X)$, and in addition that $\{N_1,N_2\}=\{0,3\}$.Thus $N_k=3$ and $N_{3-k}=0$, and $\Gamma_k$ is the set of roots of $f_{3-i}(X)$.
\end{proof}

Theorem~\ref{main} follows from the combination of Theorem~\ref{maincubic} and the following fact.

\begin{lemma} \label{Dickson}
The element $c$ is a cube in\/ $\F_{q^2}$ if and only if $D_n(a)=0$, where $n:=\lfloor(q+1)/3\rfloor$.
\end{lemma}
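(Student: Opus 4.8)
The plan is to compare $c$ with the element $\zeta\in\F_{q^2}$ determined by $a=\zeta+\zeta^{-1}$, equivalently a root of $X^2+aX+1$; this $\zeta$ lies in $\F_{q^2}$ since it is a root of a quadratic over $\F_q$, and $\zeta\notin\{0,1\}$ because $a\neq 0$. The Dickson polynomial enters immediately, since $D_n(a)=D_n(\zeta+\zeta^{-1})=\zeta^n+\zeta^{-n}$, so that $D_n(a)=0$ if and only if $\zeta^{2n}=1$. Thus the whole statement will reduce to showing two things: that $c$ is a cube in $\F_{q^2}$ exactly when $\zeta$ is, and that $\zeta$ is a cube in $\F_{q^2}$ exactly when $\zeta^{2n}=1$. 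My expectation is that the first of these is the crux, and the second is bookkeeping.

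The key step is an explicit identity tying $c$ to $\zeta$. I would set $t:=b+\omega$. Since $\omega^2+\omega=1$ and $b^2+b=1/a+1$, I get $t^2+t=(b^2+b)+(\omega^2+\omega)=1/a$. Because $b+\omega^2=t+1$, this gives $c=t/(t+1)$, and a short computation yields $c/(1+c)^2=t(t+1)=1/a$. On the other side, from $a=\zeta+\zeta^{-1}=(\zeta+1)^2/\zeta$ I have $\zeta/(1+\zeta)^2=1/a$ as well. Equating and clearing denominators (all nonzero: $1+c=1/(t+1)$ needs $b\neq\omega^2$, which holds as $b\notin\F_4$, and $1+\zeta\neq 0$ as $\zeta\neq 1$) gives, in characteristic $2$,
\[
c(1+\zeta^2)=\zeta(1+c^2)\quad\Longrightarrow\quad (c+\zeta)(1+c\zeta)=0,
\]
so $c\in\{\zeta,\zeta^{-1}\}$. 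In particular $c$ is a cube in $\F_{q^2}$ if and only if $\zeta$ is, since a nonzero element and its inverse are cubes simultaneously. This identity $c\in\{\zeta,\zeta^{-1}\}$ is the heart of the ``unexpected connection with Dickson polynomials,'' and finding the substitution $t=b+\omega$ is the one genuinely clever move; everything else is forced.

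It then remains to prove $\zeta$ is a cube in $\F_{q^2}$ iff $\zeta^{2n}=1$, which requires locating $\zeta$. Since $\Tr(1/a^2)=\Tr(1/a)=\Tr(1)$, the standard Artin--Schreier criterion shows $X^2+aX+1$ splits over $\F_q$ precisely when $m$ is even (as in the proof of Proposition~\ref{main2}); hence $\zeta\in\F_q^*$ with $\zeta^{q-1}=1$ when $m$ is even, and $\zeta\in\mu_{q+1}$ when $m$ is odd. In the even case $3\mid q-1$ and $2n=2(q-1)/3$; in the odd case $3\mid q+1$ and $2n=2(q+1)/3$. Writing $\eta:=\zeta^{(q-1)/3}$ or $\eta:=\zeta^{(q+1)/3}$ accordingly, one has $\eta^3=1$, and a one-line exponent reduction (using $q\equiv 1$ or $2\pmod 3$ to evaluate $\eta^{q+1}$ or $\eta^{q-1}$) gives in both cases
\[
\zeta^{(q^2-1)/3}=\eta^{2}=\zeta^{2n}.
\]
Since $\eta^3=1$, we have $\eta^2=1$ iff $\eta=1$, so $\zeta$ is a cube in $\F_{q^2}$ iff $\zeta^{(q^2-1)/3}=1$ iff $\zeta^{2n}=1$. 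Combining with the two reductions above completes the proof. The only place demanding care is matching $(q^2-1)/3$ with $2n=2\lfloor(q+1)/3\rfloor$ across the two parities of $m$, which the uniform substitution $\eta$ handles cleanly.
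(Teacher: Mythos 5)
Your proof is correct, and its mathematical core is the same as the paper's: both arguments rest on the identity $c+c^{-1}=a$ (the paper phrases this as $c=d^2$ where $d:=\sqrt{a}\,(b+\omega)$ satisfies $d+d^{-1}=\sqrt{a}$, and it records $c^2+ac+1=0$ explicitly later, in the proof of Theorem~\ref{roots}), combined with the Dickson functional equation and an exponent count. The organizational differences are minor but real: you identify $c$ with a root $\zeta$ of $X^2+aX+1$ and locate $\zeta$ in $\F_q^*$ or $\mu_{q+1}$ via the Artin--Schreier splitting criterion in order to compare $(q^2-1)/3$ with $2n$, whereas the paper first proves $c^{3n}=1$ directly from $c=(b+\omega)/(b+\omega^2)$ with $b\in\F_q$, reduces the cube condition to $c^n=1$ by a gcd of exponents, and finishes with $D_n(\sqrt{a})=0$ before squaring to get $D_n(a)=0$; your route avoids the square root at the cost of the case analysis on where $\zeta$ lives. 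One small slip: in the odd-$m$ case your displayed identity should read $\zeta^{(q^2-1)/3}=\eta^{\,q-1}=\eta$ rather than $\eta^2$, since $q-1\equiv 1\pmod 3$ there; this is harmless because you only use that $\eta=1$ if and only if $\eta^2=1$ (as $\eta^3=1$), so the chain of equivalences you conclude with is still valid.
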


\begin{proof}
If $m$ is odd then $q\equiv 2 \pmod 3$, so $\omega^q = \omega^2$, whence $c = (b+\omega^2)^{q-1}$ so that $c^{q+1}=1$. If $m$ is even then $\omega\in\F_q$, so that $c\in\F_q$. 
Thus for any $m$ we have $c^{3n}=1$, where in addition $\gcd(3n,(q^2-1)/3)=n$. Now $c$ is a cube in $\F_{q^2}$ if and only $c^{(q^2-1)/3}=1$, or equivalently $1=c^{\gcd(3n,(q^2-1)/3)}=c^n$. 
Since $c=(b+\omega)/(b+\omega^2)$, this says $(b+\omega)^n=(b+\omega^2)^n$. Since $b^2+b+1=1/a$ we have $(b+\omega)(b+\omega^2)=1/a$, or equivalently $\sqrt{a}(b+\omega^2) = 1/\sqrt{a}(b+\omega)$. 
So $(b+\omega)^n=(b+\omega^2)^n$ if and only if $d^n=1/d^n$ where $d:=\sqrt{a}(b+\omega)$. Thus $c$ is a cube in $\F_{q^2}$ if and only if $D_n(d+d^{-1})=0$. 
But $d+d^{-1}=\sqrt{a}(b+\omega+b+\omega^2)=\sqrt{a}$, so that $D_n(d+d^{-1})=0$ if and only if $D_n(\sqrt{a})=0$, which upon squaring yields the equivalent condition $D_n(a)=0$.
\end{proof}

We conclude this section with a proof of Proposition~\ref{supplement}.

\begin{proof}[Proof of Proposition~\emph{\ref{supplement}}]
We have $n=(q+1)/3$ if $m$ is even and $n=(q-1)/3$ otherwise. Thus $n\mid (q^2-1)$, and the hypothesis $3\nmid m$ implies that $3\nmid n$. 
Since $D_n(X+X^{-1})=X^n+X^{-n}$, the roots of $D_n(X)$ in $\mybar\F_q$ are the elements $\zeta+\zeta^{-1}$ where $\zeta\in\mybar\F_q^*$ satisfies $\zeta^n=\zeta^{-n}$, or equivalently $\zeta^n=1$. 
For $\zeta\in\mybar\F_q^*$, plainly $\zeta+\zeta^{-1}$ is in $\F_2$ if and only if $\zeta^3=1$. This proves (1). Moreover, if $\zeta^n=1$ and $n=(q-1)/3$ then $\zeta\in\F_q^*$ so also $\zeta+\zeta^{-1}\in\F_q$. 
If $\zeta^n=1$ and $n=(q+1)/3$ then $\zeta^q=\zeta^{-1}$ so that $(\zeta+\zeta^{-1})^q=\zeta+\zeta^{-1}$, whence $\zeta+\zeta^{-1}\in\F_q$. Thus every root in $\mybar\F_q$ of $D_n(X)$ is in $\F_q$. 
Finally, if $\zeta^n=\eta^n=1$ then $\zeta+\zeta^{-1}=\eta+\eta^{-1}$ if and only if $\eta\in\{\zeta,\zeta^{-1}\}$, so the number of roots of $D_n(X)$ in $\mybar\F_q\setminus\F_2$ is $(n-1)/2$. 
This yields (2), since the integer $(n-1)/2$ is either $(q-2)/6$ or $(q-4)/6$, and hence equals $\lfloor q/6\rfloor$.
\end{proof}

%#######################################################################
%#######################################################################

\section{Proof of Theorem~\ref{roots}} \label{sec:roots}

In this section we prove Theorem~\ref{roots}. We use the notation from Section~\ref{sec:notation}, where in addition we assume that $a\ne 1$, so that $b\in\F_q\setminus\F_4$ and $c:=(b+\omega)/(b+\omega^2) \in \F_{q^2}^*$. 
In light of Theorem~\ref{maincubic}, there are two main issues to resolve: first, we must exhibit the roots of each $f_i(X)$; and second, in case $3\mid m$ and $c$ is not a cube in $\F_{q^2}$, we must determine which of $f_1(X)$ or $f_2(X)$ divides $H_\ell(X)$.

We first determine the roots of $f_i(X)$. We need only do this for $i\in\{1,2\}$, since we determined the roots of $f_0(X)$ in Lemma~\ref{f0roots}.

\begin{lemma} \label{f1roots}
Define
\begin{align*}
\Lambda_1&:=\Bigl\{\frac{a^{-1}+v+v^{-1}}{b^2}\colon v\in\F_{q^6}\text{ with } v^3=\omega c\Bigr\}, \\
\Lambda_2&:=\Bigl\{\frac{a^{-1}+v+v^{-1}}{b^2+1}\colon v\in\F_{q^6}\text{ with } v^3=\omega^2 c\Bigr\}.
\end{align*}
Then $\Lambda_i$ is the set of roots in\/ $\mybar\F_q$ of $f_i(X)$, for each $i\in\{1,2\}$.
\end{lemma}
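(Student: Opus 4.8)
The plan is to prove the statement for $f_1(X)$ and then deduce the case of $f_2(X)$ from a symmetry, so I will concentrate on $\Lambda_1$. The idea is to run the affine reduction already carried out in the proof of Lemma~\ref{irr} in reverse, reducing $f_1(X)$ to a polynomial of the shape treated by Lemma~\ref{f0roots}. Recall that there $g_1(X)=(ab^2)^{-1}f_1(X+(ab^2)^{-1})$ was computed to be $X^3+b^{-4}X+ab^{-4}$ (using $1/(b^2+b+1)=a$). First I would scale by setting $X=Y/b^2$, which turns $g_1$ into $b^{-6}(Y^3+Y+ab^2)$; hence the roots of $g_1(X)$ are exactly $(v+v^{-1})/b^2$ as $v$ ranges over the elements furnished by Lemma~\ref{f0roots} for the polynomial $Y^3+Y+ab^2$.

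By Lemma~\ref{f0roots}, those roots are $\{v+v^{-1}\colon v^3=e\}$ for any chosen root $e$ of $E^2+ab^2E+1=0$. The key identification is that $\omega c$ is such a root: this follows immediately from the computation $e_1=\omega c/b^6$ in Lemma~\ref{irr}, since scaling that root of $h_1(X)=X^2+(a/b^4)X+b^{-12}$ by $b^6$ shows that $\omega c=b^6e_1$ satisfies $(\omega c)^2+ab^2(\omega c)+1=0$. The ambiguity in the choice of root of the quadratic is harmless, since replacing $e$ by its reciprocal $1/e$ only replaces each $v$ by $v^{-1}$ and leaves $v+v^{-1}$ unchanged. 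Undoing the two changes of variable, the roots of $f_1(X)$ are then $(v+v^{-1})/b^2+(ab^2)^{-1}=(a^{-1}+v+v^{-1})/b^2$ with $v^3=\omega c$, which is precisely $\Lambda_1$; here all such $v$ lie in $\F_{q^6}$ because $\omega c\in\F_{q^2}^*$ and $3(q^2-1)\mid q^6-1$. Finally, Lemma~\ref{factorization} guarantees that $f_1(X)$ has exactly three distinct roots, so $\Lambda_1$ is the full root set.

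For $f_2(X)$ I would exploit the substitution $b\mapsto b+1$: a direct check shows $f_2(X)=f_1(X)$ after replacing $b$ by $b+1$, while $a=1/(b^2+b+1)$ and $\omega$ are both fixed by this substitution. The same substitution sends $c=(b+\omega)/(b+\omega^2)$ to $c^{-1}$, so it carries the defining condition $v^3=\omega c$ of $\Lambda_1$ to $v^3=\omega c^{-1}$; applying $v\mapsto v^{-1}$ (which fixes $v+v^{-1}$) rewrites this as $v^3=\omega^2 c$, and $b^2\mapsto b^2+1$ transforms the denominator accordingly. Thus $\Lambda_1$ is carried to exactly $\Lambda_2$, and the $f_1$ result immediately yields the $f_2$ result.

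I expect the main obstacle to be purely bookkeeping: composing the scaling $X=Y/b^2$ with the shift by $(ab^2)^{-1}$ without normalization errors, and correctly tracking the two harmless inversions $v\mapsto v^{-1}$ (one in identifying the quadratic root, one in the $b\mapsto b+1$ symmetry). Everything else is supplied by Lemmas~\ref{f0roots}, \ref{irr}, and \ref{factorization}.
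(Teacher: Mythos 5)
Your proposal is correct, but it is organized quite differently from the paper's proof, which simply verifies by direct expansion the explicit factorizations
\[
f_1(X) = ab^2 \prod_{i=0}^2 \Bigl(X + \frac{a^{-1}+\omega^i v_1 + \omega^{-i} v_1^{-1}}{b^2}\Bigr),
\qquad
f_2(X) = a(b+1)^2 \prod_{i=0}^2 \Bigl(X + \frac{a^{-1}+\omega^i v_2 + \omega^{-i} v_2^{-1}}{b^2+1}\Bigr),
\]
for $v_1^3=\omega c$ and $v_2^3=\omega^2 c$ (the same style of brute-force identity check used for Lemma~\ref{f0roots}). You instead recycle the affine normalization $f_1\mapsto g_1$ and the quadratic root $e_1=\omega c/b^6$ already computed in the proof of Lemma~\ref{irr}, rescale $g_1$ by $X=Y/b^2$ to reach $Y^3+Y+ab^2$, and invoke Lemma~\ref{f0roots} with $e=\omega c=b^6e_1$; all of these steps check out, including the verification that $(\omega c)^2+ab^2(\omega c)+1=0$, the harmlessness of the two $v\mapsto v^{-1}$ ambiguities, the divisibility $3(q^2-1)\mid q^6-1$ placing the cube roots in $\F_{q^6}$, and the symmetry $b\mapsto b+1$ (which fixes $a$ and $\omega$, sends $c$ to $c^{-1}$, and carries $f_1$ to $f_2$). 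What your route buys is that no new polynomial identity needs to be expanded: everything is assembled from Lemmas~\ref{f0roots} and \ref{irr}, and the $b\mapsto b+1$ symmetry halves the work by deriving the $f_2$ case from the $f_1$ case. What the paper's route buys is brevity and independence: the displayed product identities can be checked in one pass (or by machine) without tracking a chain of substitutions, which is exactly the bookkeeping you flag as the main risk in your own argument.
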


\begin{proof}
We simply check that if $v_1^3=\omega c$ then
\[
f_1(X) = ab^2 \prod_{i=0}^2 \Bigl(X + \frac{a^{-1}+\omega^i v_1 + \omega^{-i} v_1^{-1}}{b^2}\Bigr),
\]
and likewise if $v_2^3=\omega^2 c$ then
\[
f_2(X) = a(b+1)^2 \prod_{i=0}^2 \Bigl(X + \frac{a^{-1}+\omega^i v_2 + \omega^{-i} v_2^{-1}}{b^2+1}\Bigr). \qedhere
\]
\end{proof}

Now we address the case that $3\mid m$ and $c$ is not a cube in $\F_{q^2}$.

\begin{lemma} \label{extra}
Suppose that $3\mid m$ and $c$ is not a cube in\/ $\F_{q^2}$. Then $f_1(X)$ divides $H_\ell(X)$ if and only if $c^{(q^2-1)/3}=\omega^{-\ell}$.
\end{lemma}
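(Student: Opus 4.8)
The plan is to combine the explicit parametrization of the roots of $f_1(X)$ from Lemma~\ref{f1roots} with the characterization of the sets $\Gamma_1$ and $\Gamma_2$ in Lemma~\ref{twistdeg3}. Since $3\mid m$, the element $\omega$ is a cube in $\F_{q^2}$ (as noted in the proof of Corollary~\ref{irr2}), so the hypothesis that $c$ is a non-cube in $\F_{q^2}$ forces $\omega c$ to be a non-cube as well; hence $f_1(X)$ is irreducible over $\F_q$ by Lemma~\ref{irr}. Because $\F_{q^2}\supseteq\F_4$ contains the cube roots of unity, $\omega c$ being a non-cube means $X^3+\omega c$ is irreducible over $\F_{q^2}$, so for any $v\in\F_{q^6}$ with $v^3=\omega c$ the three cube roots of $\omega c$ are exactly the $\F_{q^2}$-conjugates $v,v^{q^2},v^{q^4}$. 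Throughout I will track the action of maps on the roots $\beta(v):=(a^{-1}+v+v^{-1})/b^2$ of $f_1(X)$ through this cube-root parameter $v$, noting that the three cube roots give three distinct values $\beta(v),\beta(\omega v),\beta(\omega^2 v)$.

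First I would pin down how the Frobenius $\sigma\colon x\mapsto x^q$ acts on the roots. Since $a,b\in\F_q$ we have $\beta(v)^q=\beta(v^q)$, and in particular $\beta(v)^{q^2}=\beta(v^{q^2})$. Now $v^{q^2-1}=(v^3)^{(q^2-1)/3}=(\omega c)^{(q^2-1)/3}=c^{(q^2-1)/3}$, using $\omega^{(q^2-1)/3}=1$; writing $\chi:=c^{(q^2-1)/3}$, this gives $v^{q^2}=\chi v$, where $\chi\in\{\omega,\omega^2\}$ is a primitive cube root of unity because $c$ is a non-cube. As $\chi\in\F_4\subseteq\F_{q^2}$, the map $\sigma^2$ multiplies the parameter by $\chi$ at every root. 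Since the roots lie in $\F_{q^3}\setminus\F_q$, the map $\sigma$ is a $3$-cycle, and the only $3$-cycles on $\{\beta(v),\beta(\omega v),\beta(\omega^2 v)\}$ are the parameter-multiplications by $\omega$ and by $\omega^2$; hence, from $\sigma^2$ multiplying by $\chi$, I conclude that $\sigma$ multiplies the parameter by $\chi^2$.

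Next I would determine the action of $\rho(X)$ on the same parameter. By Lemma~\ref{factorization}, $\rho(X)$ induces a $3$-cycle on the roots of $f_1(X)$, so it too multiplies the parameter by $\omega$ or by $\omega^2$; the claim I would verify by direct computation is that $\rho(\beta(v))=\beta(\omega^2 v)$. Using $a^{-1}=b^2+b+1$ one simplifies $\rho(\beta(v))=a/(\beta(v)^2+1)=ab^4/((v+v^{-1})^2+b^2+1)$, and the remaining task is the identity $ab^6=(a^{-1}+\omega^2 v+\omega v^{-1})((v+v^{-1})^2+b^2+1)$, which follows from $v^3=\omega c$ together with $c=(b+\omega)/(b+\omega^2)$.

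Finally I would combine these actions via Lemma~\ref{twistdeg3}, which says $f_1(X)\mid H_2(X)$ iff $\rho$ agrees with $\sigma$ on the roots, and $f_1(X)\mid H_1(X)$ iff $\rho$ agrees with $\sigma^2$. Comparing parameter-multipliers, with $\rho$ multiplying by $\omega^2$, these read $\omega^2=\chi^2$ (i.e.\ $\chi=\omega$) and $\omega^2=\chi$, respectively. Since every $\beta\in\F_{q^3}$ satisfies $\beta^{q^\ell}=\beta^{q^{\ell\bmod 3}}$, irreducibility of $f_1(X)$ gives $f_1(X)\mid H_\ell(X)$ iff $f_1(X)\mid H_{\ell\bmod 3}(X)$; for $\ell\equiv 2\pmod 3$ this holds iff $\chi=\omega=\omega^{-\ell}$, and for $\ell\equiv 1\pmod 3$ iff $\chi=\omega^2=\omega^{-\ell}$. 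In both cases the condition is $c^{(q^2-1)/3}=\omega^{-\ell}$, as desired. The main obstacle is the computation establishing $\rho(\beta(v))=\beta(\omega^2 v)$ — in particular getting the direction ($\omega^2$ rather than $\omega$) correct, since that sign is exactly what fixes the exponent $-\ell$ in the final condition.
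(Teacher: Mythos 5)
Your proof is correct, and it takes a genuinely different route from the paper's. The paper also starts from the parametrization $\beta(v)=(a^{-1}+v+v^{-1})/b^2$ of the roots of $f_1(X)$ given by Lemma~\ref{f1roots}, but then splits into the cases $m$ even and $m$ odd, writes $v^{q-1}=\omega^i$ (resp.\ $v^{q+1}=\omega^i$), and verifies the equation $u^{2q^\ell+1}=u+a$ directly, showing by a routine identity that it holds exactly when $\ell i\equiv 1\pmod 3$. You instead pivot on Lemma~\ref{twistdeg3}: you show $\rho$ multiplies the cube-root parameter by $\omega^2$, compute that the square of the $q$-power Frobenius multiplies it by $\chi:=c^{(q^2-1)/3}$ (a computation uniform in the parity of $m$, since $q^2\equiv 1\pmod 3$ always), and recover the Frobenius itself as multiplication by $\chi^2$ from the $3$-cycle structure. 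This neatly avoids the paper's even/odd case split, and it trades the paper's verification of $u^{2q^\ell+1}=u+a$ for the single identity $ab^6=(a^{-1}+\omega^2v+\omega v^{-1})\bigl((v+v^{-1})^2+b^2+1\bigr)$. I checked that this identity does hold, with the multiplier $\omega^2$ and not $\omega$; one caution worth spelling out in a final write-up is that it is \emph{not} a Laurent-polynomial identity in a free variable $v$ — the terms $a^{-1}(v^2+v^{-2})$ only cancel after reducing $v^{-1}=v^2/(\omega c)$ and $v^{-2}=v/(\omega c)$, so the relation $v^3=\omega c$ is essential and the check amounts to three coefficient identities in $b$. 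Both proofs thus rest on one ``routine'' computation of comparable weight; yours is arguably more in the spirit of Section~\ref{sec:reduce}, where the comparison of $\rho$ with the $q$-th and $q^{-1}$-th power maps is already the organizing principle, and it makes quantitative the qualitative statement in the proof of Proposition~\ref{fi} that $\rho$ agrees with exactly one of those two maps.
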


\begin{proof}
Since $3\mid m$, $\omega$ is a cube in $\F_{q^2}$, so that $\omega c$ and $\omega^2 c$ are non-cubes in $\F_{q^2}$. Write $u:=(a^{-1}+v+v^{-1})/b^2$ where $v^3=\omega c$, so that $v\notin\F_{q^2}$.

First assume $m$ is even, so that $\omega\in\F_q$ and thus $v^3\in\F_q$, whence $v^{3q-3}=1$; since $v\notin\F_q$, it follows that $v^{q-1}=\omega^i$ for some $i\in\{1,2\}$. 
Thus $v^{q^2-1}=\omega^{2i}$, so that $v^{q^\ell}=v\omega^{\ell i}$. Note that $v^{q^\ell-1}=(\omega c)^{(q^\ell-1)/3}=c^{(q^\ell-1)/3}$. We compute
\[
u^{2q^\ell}=\frac{a^{-2}+v^2\omega^{2\ell i}+v^{-2}\omega^{-2\ell i}}{b^4}=\frac{b^4+b^2+1+v^2\omega^{-\ell i}+v^{-2}\omega^{\ell i}}{b^4},
\]
so that $u^{2q^\ell+1}=u+a$ if and only if
\[
(b^2+b+1+v+v^{-1})\cdot (b^4+b^2+1+v^2\omega^{-\ell i}+v^{-2}\omega^{\ell i}) = b^4\cdot (b^2+b+1+v+v^{-1}+ab^2).
\]
It is routine to check that this equality holds if $\ell i\equiv 1\pmod 3$, but if $\ell i\equiv 2\pmod 3$ then the sum of the two sides is
\[
\frac{(v+1)^4 (v+\omega)^2 (v+\omega^2)^6}{v^3 (v^3+\omega)^2},
\]
which is nonzero since $v^3=1$ if and only if $b\omega+\omega^2=b+\omega^2$, which does not occur since $b\ne 0$.  
Thus $H_\ell(u)=0$ if and only if $c^{(q-1)/3}=\omega^\ell$, or equivalently $c^{(q^2-1)/3}=\omega^{-\ell}$.

Next assume $m$ is odd, so that $\omega^q=\omega^2$, and thus $c^q=(b+\omega^2)/(b+\omega)=c^{-1}$, so $c^{q+1}=1$. 
It follows that $v^{3q+3}=1$, and since $v\notin\F_{q^2}$ we conclude that $v^{q+1}=\omega^i$ for some $i\in\{1,2\}$. Hence $v^q=\omega^i v^{-1}$ and $v^{q^2}=\omega^i v$. Thus
\[
u^{2q} = \frac{a^{-2}+\omega^{2i}v^{-2}+\omega^{-2i}v^2}{b^4},
\]
so we conclude as above that $u^{2q+1}=u+a$ if and only if $i\equiv 2\pmod 3$. Likewise,
\[
u^{2q^2} = \frac{a^{-2}+\omega^{2i}v^2+\omega^{-2i}v^{-2}}{b^4},
\]
so that $u^{2q^2+1}=u+a$ if and only if $i\equiv 1\pmod 3$. Hence $H_\ell(u)=0$ if and only if $c^{(q+1)/3}=\omega^{-\ell}$, or equivalently $c^{(q^2-1)/3}=\omega^{-\ell}$.

We have shown that in every case $H_\ell(u)=0$ if and only if $c^{(q^2-1)/3}=\omega^{-\ell}$. Since $u$ varies over the three roots of $f_1(X)$ by Lemma~\ref{f1roots}, it follows that $f_1(X)\mid H_\ell(X)$ if and only if $c^{(q^2-1)/3}=\omega^{-\ell}$.
\end{proof}

\begin{proof}[Proof of Theorem~\emph{\ref{roots}}]
By Lemma~\ref{f1roots}, the sets $\Lambda_1$ and $\Lambda_2$ in Theorem~\ref{roots} are the sets of roots in $\mybar\F_q$ of $f_1(X)$ and $f_2(X)$, respectively.
Moreover, we have $c+c^{-1}=(b^2+b+1)^{-1}=a$, so that $c^2+ac+1=0$ and thus in Corollary~\ref{f0roots} we may put $e:=c$ to conclude that $\Lambda_0$ is the set of roots in $\mybar\F_q$ of $f_0(X)$. 
By Lemma~\ref{factorization}, the $\Lambda_i$ are pairwise disjoint sets of size $3$. Now items (1) and (3) of Theorem~\ref{roots} follow from Theorem~\ref{maincubic}.
% Actually what we wrote here does not prove that if ell\equiv -m\pmod 3 then there is a unique k in {0,1,2} for which omega^k c is a cube in F_{q^2}.  But I don't want to emphasize that, since it's irrelevant to what we're doing in this paper, and anyway this uniqueness is easy (and is proved in the proof of Corollary 5.2).

Henceforth suppose that $3\mid m$ and $c$ is not a cube in $\F_{q^2}$. Then Theorem~\ref{maincubic} implies that $\Gamma_\ell$ is in $\{\Lambda_1,\Lambda_2\}$.
Finally, Lemma~\ref{extra} shows that $\Gamma_\ell=\Lambda_1$ if and only if $c^{(q^2-1)/3}=\omega^{-\ell}$, so that also $\Gamma_\ell=\Lambda_2$ if and only if $c^{(q^2-1)/3}=\omega^{-2\ell}$, which yields item (2) of Theorem~\ref{roots}.
\end{proof}

%#######################################################################
%#######################################################################

\section{Proof of the Open Problem and Conjecture of Zheng et al.} \label{sec:zheng}

In this section we prove refinements of Corollaries~\ref{zheng} and \ref{zheng2}.  Throughout this section we use the following notation:

\begin{itemize}
\item $\ell$ is a prescribed positive integer coprime to $3$,
\item $q:=2^m$ for some positive integer $m$,
\item $\mu_{q^2+q+1}$ is the set of $(q^2+q+1)$-th roots of unity in $\F_{q^3}^*$,
\item $\Tr(X)$ is the trace relative to the field extension $\F_q/\F_2$,
\item $\omega$ is a prescribed element of $\F_4\setminus\F_2$,
\item $h,e\in\F_q\setminus\F_2$ satisfy $h^3=e^2+e+1$,
\item $u:=\sqrt{h}$,
\item $b:=1/\sqrt{e}$,
\item $a:=e/u^3$,
\item $c:=(b+\omega)/(b+\omega^2)$,
\item $G_k(X):=X^{2q^k+1}+hX+e$ for any nonnegative integer $k$,
\item $H_k(X):=X^{2q^k+1}+X+a$ for any nonnegative integer $k$,
\item $\Gamma_k$ is the set of roots in $\F_{q^3}$ of $H_k(X)$.
%\item $\Gamma$ is the set of roots in $\F_{q^3}$ of $G_2(X)$,
\end{itemize}

\begin{lemma} \label{scale}
We have $a\in\F_q\setminus\F_2$,\, $b,e,u\in\F_q\setminus\F_4$, and $b^2+b=a^{-1}+1$, and the roots of $G_\ell(X)$ are the products of $u$ with each root of $H_\ell(X)$.  
% In particular, the roots in $\mu_{q^2+q+1}$ of $G_\ell(X)$ are the products of $u$ with the roots in\/ $\F_{q^3}$ of $H_\ell(X)$ whose $(q^2+q+1)$-th power is $ab^2$.
\end{lemma}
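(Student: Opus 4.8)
The plan is to derive every assertion directly from the definitions $u=\sqrt h$, $b=1/\sqrt e$, $a=e/u^3$ together with the defining relation $h^3=e^2+e+1$, systematically exploiting that squaring is an automorphism of $\F_q$ (so square roots exist and are unique in $\F_q$, and the inverse square-root map is additive). First I would record that $u,b\in\F_q$ and hence $a=e/u^3\in\F_q$, where $u\ne 0$ because $h\ne 0$. This settles that $a,b,u$ all lie in $\F_q$, so for the first two assertions it remains only to exclude membership in $\F_2$ and in $\F_4$, and for the third assertion to pin down the relation between $b$ and $a$.

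The heart of the argument is the identity $b^2+b+1=a^{-1}$. I would obtain it by noting $u^2=h$, hence $u^3=hu$ and $u^6=h^3=e^2+e+1$, so that $a^{-1}=u^3/e$ is the (unique) square root in $\F_q$ of $(e^2+e+1)/e^2$. Applying the additive square-root map gives $a^{-1}=(e+\sqrt e+1)/e=1+1/\sqrt e+1/e$, and since $b=1/\sqrt e$ and $b^2=1/e$ this is exactly $1+b+b^2$. This yields $b^2+b=a^{-1}+1$, the third assertion.

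With $b^2+b+1=a^{-1}$ in hand the exclusions are cheap. For $a\ne 1$: if $a=1$ then $b^2+b=0$, forcing $b\in\F_2$, but $b\ne 0$ and $b=1$ would give $e=1$, contradicting $e\in\F_q\setminus\F_2$; together with $a\ne 0$ this gives $a\in\F_q\setminus\F_2$. For the $\F_4$ statements I would first rule out $h\in\F_4$ and $e\in\F_4$ using that any element of $\F_4\setminus\F_2$ is a primitive cube root of unity: $h\in\F_4\setminus\F_2$ would give $e^2+e+1=h^3=1$, hence $e\in\F_2$, while $e\in\F_4\setminus\F_2$ would give $h^3=e^2+e+1=0$, hence $h\in\F_2$; both contradict the hypotheses, and $h,e\ne 0,1$ is given. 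Then $u\notin\F_4$ and $b\notin\F_4$ follow since $\F_4$ is closed under squaring and inversion, so $u\in\F_4$ would force $h=u^2\in\F_4$ and $b\in\F_4$ would force $1/e=b^2\in\F_4$.

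Finally, for the root correspondence I would verify the polynomial identity $G_\ell(uX)=u^3 H_\ell(X)$. Since $u\in\F_q$ we have $u^{q^\ell}=u$, so $u^{2q^\ell+1}=u^3$; substituting $uX$ into $G_\ell$ gives $u^3X^{2q^\ell+1}+huX+e$, which equals $u^3X^{2q^\ell+1}+u^3X+e$ because $hu=u^3$, and since $u^3a=e$ this is $u^3(X^{2q^\ell+1}+X+a)=u^3H_\ell(X)$. As $u^3\ne 0$, the map $\gamma\mapsto u\gamma$ is a bijection from the roots of $H_\ell$ to the roots of $G_\ell$, and it preserves $\F_{q^3}$ since $u\in\F_q\subseteq\F_{q^3}$. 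None of these steps is genuinely hard; the only real care needed is the square-root bookkeeping in the derivation of $b^2+b+1=a^{-1}$ and the correct use of $u^{q^\ell}=u$ in the substitution, so I would treat the identity $G_\ell(uX)=u^3H_\ell(X)$ as the single point that must be gotten exactly right.
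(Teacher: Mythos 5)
Your proposal is correct and follows essentially the same route as the paper: verify the identity $b^2+b+1=a^{-1}$ from $u^6=h^3=e^2+e+1$ (the paper squares both sides where you take square roots, which is equivalent), deduce the $\F_2$ and $\F_4$ exclusions from the hypotheses on $h$ and $e$, and establish the root correspondence via the scaling identity $u^{-3}G_\ell(uX)=H_\ell(X)$. The only difference is that you spell out the membership exclusions that the paper dismisses with ``the definitions imply,'' so no further comment is needed.
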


\begin{proof}
The definitions imply that $a\in\F_q^*$ and $e,b,h,u\in\F_q\setminus\F_4$. We compute
\[
(b^2+b+1)^2=\frac{1+e+e^2}{e^2}=\frac{h^3}{e^2}=\frac{1}{a^2},
\]
so that $b^2+b+1=1/a$, whence $a\ne 1$. Since $u\in\F_q^*$ we have $u^{-3} G_\ell(uX)=H_\ell(X)$, so the roots of $G_\ell(X)$ are $u$ times the roots of $H_\ell(X)$.
\end{proof}

The following result generalizes Corollary~\ref{zheng2}.

\begin{prop} \label{zhengprop2}
The polynomial $G_\ell(X)$ has either zero or three roots in $\mu_{q^2+q+1}$. 
It has three such roots if and only if $(e+\omega)^{(q^2-1)/3}=\omega^{-\ell}$, in which case these roots are the three roots of $X^3+h^2 X^2 + (e+1)hX+1$.  
Explicitly, these roots are the values $h^2+e\sqrt{h}(v+v^{-1})$ where $v$ varies over the cube roots of $\omega c$.
\end{prop}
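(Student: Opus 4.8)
The plan is to reduce everything to $H_\ell(X)$ through Lemma~\ref{scale} and then extract the answer from Theorem~\ref{roots} by a norm computation. The first point is that $\mu_{q^2+q+1}$ is exactly the kernel of the norm map $w\mapsto w^{1+q+q^2}$ from $\F_{q^3}^*$ to $\F_q^*$, so a nonzero $w\in\F_{q^3}$ lies in $\mu_{q^2+q+1}$ if and only if $w^{1+q+q^2}=1$. By Lemma~\ref{scale} the roots of $G_\ell(X)$ are the elements $u\gamma$ with $\gamma\in\Gamma_\ell$, and since $u\in\F_q^*$ we have $(u\gamma)^{1+q+q^2}=u^3\gamma^{1+q+q^2}$; thus I must decide for which $\gamma\in\Gamma_\ell$ one has $\gamma^{1+q+q^2}=u^{-3}$. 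Recall from Theorem~\ref{roots} that $\Gamma_\ell$ is one of $\emptyset$, $\Lambda_0$, $\Lambda_1$, $\Lambda_2$, or $\Lambda_0\cup\Lambda_1\cup\Lambda_2$.

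Next I would compute these norms. For $i\in\{1,2\}$, if $\Lambda_i\subseteq\Gamma_\ell$ then $f_i(X)$ must be irreducible over $\F_q$: otherwise its roots would lie in $\F_q$ and hence, being roots of $f_i(X)$, could not be roots of $f_0(X)=X^3+X+a$ (the $f_i$ are pairwise coprime), whereas every root of $H_\ell(X)$ in $\F_q$ is a root of $f_0(X)$. When $f_i(X)$ is irreducible its three roots are the Frobenius conjugates of one another, so $\gamma^{1+q+q^2}$ equals the product of the roots of $f_i(X)$; using the definitions of the $f_i$ together with $u^3a=e$, $b^2=1/e$ and $(b+1)^2=(1+e)/e$, this product times $u^3$ equals $1$ for $i=1$ and $1+e$ for $i=2$. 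Since $e\notin\F_2$, only $\Lambda_1$ yields roots of $G_\ell(X)$ in $\mu_{q^2+q+1}$, and there all three do. For $\Lambda_0$, Lemma~\ref{f0} shows that $\Lambda_0\subseteq\Gamma_\ell$ only when $f_0(X)$ splits over $\F_q$, so $u\gamma\in\F_q$ and membership in $\mu_{q^2+q+1}$ forces $(u\gamma)^3=1$; substituting $\gamma^3=\gamma+a$ shows this would require $\gamma=(1+e)/u^3$, a value that is a root of $f_0(X)$ only if $e^3=0$, which is impossible. Finally, Lemma~\ref{f1roots} gives $u\gamma=h^2+e\sqrt h\,(v+v^{-1})$ with $v^3=\omega c$ for $\gamma\in\Lambda_1$, and the three elementary symmetric functions of these values are $h^2$, $(e+1)h$, $1$, so $u\Lambda_1$ is precisely the root set of $X^3+h^2X^2+(e+1)hX+1$.

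At this stage I have shown that $G_\ell(X)$ has $0$ or $3$ roots in $\mu_{q^2+q+1}$, with three occurring exactly when $\Lambda_1\subseteq\Gamma_\ell$, in which case they are the three roots of $X^3+h^2X^2+(e+1)hX+1$ of the stated form. It remains to turn $\Lambda_1\subseteq\Gamma_\ell$ into the condition $(e+\omega)^{(q^2-1)/3}=\omega^{-\ell}$. The key algebraic input is the pair of identities $(\omega c)^2=(e+\omega)/(e+\omega^2)$ (from $c=(1+\omega\sqrt e)/(1+\omega^2\sqrt e)$) and $(e+\omega)(e+\omega^2)=e^2+e+1=h^3$; the latter is a cube in $\F_q$, so raising the former to the power $(q^2-1)/3$ and eliminating $(e+\omega^2)$ gives $(\omega c)^{(q^2-1)/3}=(e+\omega)^{(q^2-1)/3}$. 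Feeding this equality into the case distinctions of Theorem~\ref{roots}---and noting $\omega^{(q^2-1)/3}=\omega^{m}$ and that only $\ell\bmod 3$ matters---shows that $\Gamma_\ell\in\{\Lambda_1,\ \Lambda_0\cup\Lambda_1\cup\Lambda_2\}$ exactly when $(e+\omega)^{(q^2-1)/3}=\omega^{-\ell}$, which finishes the proof.

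The step I expect to be hardest is this last translation: verifying $\Lambda_1\subseteq\Gamma_\ell\iff(e+\omega)^{(q^2-1)/3}=\omega^{-\ell}$ means marching through every branch of Theorem~\ref{roots} while tracking the cube-class of $\omega^ic$ and the residue of $\ell$ modulo $3$, and it is precisely the identity $(\omega c)^{(q^2-1)/3}=(e+\omega)^{(q^2-1)/3}$ that collapses these branches into one statement. A smaller but genuine nuisance is the split-$f_0$ case, which cannot be handled by the product-of-roots formula and must be excluded by the explicit substitution above.
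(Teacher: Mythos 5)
Your proposal is correct and follows essentially the same route as the paper's proof: reduce to $H_\ell(X)$ via Lemma~\ref{scale}, observe that membership in $\mu_{q^2+q+1}$ is a norm condition which (by the constant-term/product-of-roots computation) is satisfied by all roots of $f_1(X)$ and by no roots of $f_2(X)$ or of $f_0(X)$, and then convert the cube-class condition on $\omega c$ into the stated condition on $e+\omega$ using $(e+\omega)(e+\omega^2)=h^3$. The paper organizes the middle step through Proposition~\ref{fi} (irreducible cubic divisors of $H_\ell$ with constant term $ab^2$) rather than through the sets $\Lambda_i$ of Theorem~\ref{roots}, but the content is identical, and your final case analysis, while only sketched, is the same verification the paper performs via Theorem~\ref{maincubic} and Lemma~\ref{extra}.
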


\begin{proof}
We first show that $G_\ell(X)$ has no roots in $\mu_{q^2+q+1}\cap\F_q$. For, any such root $\beta$ would satisfy $\beta^3=1$ and $1+h\beta+e=0$, which yields the contradiction $0=h^3+e^2+e+1=(e+1)^3+e^2+e+1=e^3$.

By Lemma~\ref{scale}, the roots of $G_\ell(X)$ in $\mu_{q^2+q+1}$ are the values $u\delta$ where $\delta\in\F_{q^3}\setminus\F_q$ satisfies $H_\ell(\delta)=0$ and $\delta^{q^2+q+1}=1/u^{q^2+q+1}$.
Since $u\in\F_q$, the last condition says $\delta^{q^2+q+1}=1/u^3$, which by definition equals $a/e=ab^2$. Since $u\notin\F_4$, we know that $ab^2\ne 1$.
Thus the elements $\delta$ consist of the roots of irreducible monic cubic polynomials $g(X)\in\F_q[X]$ which divide $H_\ell(X)$ and have constant term $ab^2$.
By Lemma~\ref{scale}, the elements $a$ and $b$ satisfy the hypotheses of Proposition~\ref{fi}, so that $g(X)$ is a constant times either $f_1(X)$ or $f_2(X)$.
Plainly the ratio of the coefficients of $f_1(X)$ of degrees $0$ and $3$ is $ab^2$, while the corresponding ratio for $f_2(X)$ is $a(b+1)^2\ne ab^2$.
It follows that $G_\ell(X)$ has either zero or three roots in $\mu_{q^2+q+1}$, with three roots occurring if and only if $f_1(X)$ is irreducible in $\F_q[X]$ and divides $H_\ell(X)$, in which case the three roots are the roots of $f_1(X/u)$.
Lemma~\ref{irr} implies that $f_1(X)$ is irreducible in $\F_q[X]$ if and only if $(\omega c)^{(q^2-1)/3}\ne 1$.
Since the polynomials $f_0(X)$, $f_1(X)$, and $f_2(X)$ are pairwise coprime by Lemma~\ref{factorization}, the combination of Theorem~\ref{maincubic} and Lemma~\ref{extra} implies that $f_1(X)$ divides $H_\ell(X)$ if and only if one of the following holds:
\begin{itemize}
\item $m \equiv \ell\pmod 3$ and $(\omega^2 c)^{(q^2-1)/3}=1$;
\item $m \equiv -\ell\pmod 3$ and $c^{(q^2-1)/3}=1$;
\item $3\mid m$ and $c^{(q^2-1)/3} = \omega^{-\ell}$.
\end{itemize}
Since $(b+\omega)(b+\omega^2)=b^2+b+1=u^3/e$, we have
\[
c^{(q^2-1)/3} = \bigl((b+\omega)^2e\bigr)^{(q^2-1)/3}=(1+\omega^2e)^{(q^2-1)/3}.
\]
Since $(q^2-1)/3 \equiv m \pmod 3$, we conclude that $G_\ell(X)$ has three roots in $\mu_{q^2+q+1}$ if and only if $(e+\omega)^{(q^2-1)/3} = \omega^{-\ell}$.

We have shown that if $G_\ell(X)$ has three roots in $\mu_{q^2+q+1}$ then these three roots are the roots of
\[
u^6 f_1(X/u) = X^3 + h^2 X^2 + (e+1) h X + 1.
\]
By Lemma~\ref{f1roots}, these roots are $u(a^{-1}+v+v^{-1})/b^2$ where $v^3=\omega c$, which equals $h^2+e\sqrt{h}(v+v^{-1})$.
\end{proof}

Our final result generalizes Corollary~\ref{zheng}.

\begin{prop} \label{zhengprop}
Suppose that $m\not\equiv 1\pmod 3$. Then the set $\Gamma$ of roots in\/ $\F_{q^3}$ of $G_2(X)$ satisfies $\abs{\Gamma}=3$, and $\Gamma$ is contained in\/ $\F_q$ if and only if $1+\omega e$ is a cube in\/ $\F_{q^2}$. 
If $\Gamma\not\subseteq\F_q$ then $\Gamma\subseteq\F_{q^3}\setminus\F_q$, and $\Gamma$ is the set of roots of $F(X)$ where
\begin{enumerate}
\item $F(X)=X^3+h^2 X^2 + (e+1)hX+1$\, if\, $(e+\omega)^{(q^2-1)/3}=\omega$; 
\item $F(X)=(e+1)X^3+h^2 X^2+hX+e^2+1$\, if\, $(e+\omega)^{(q^2-1)/3}\ne\omega$.
\end{enumerate}
Explicitly, we have
\[
\Gamma=
\begin{cases}
\{\sqrt{h}(v+v^{-1})\colon v^3=c\} & \text{if $(e+\omega)^{(q^2-1)/3}=\omega^m$;} \\
\{h^2+e\sqrt{h}(v+v^{-1})\colon v^3=\omega c\} &\text{if $(e+\omega)^{(q^2-1)/3}=\omega$;} \\
\Bigl\{\displaystyle{\frac{h^2+e\sqrt{h}(v+v^{-1})}{e+1}} \colon v^3=\omega^2 c\Bigr\} &\text{otherwise.}
\end{cases}
\]
\end{prop}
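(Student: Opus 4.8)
The plan is to transport the whole question through the substitution of Lemma~\ref{scale} and then read off the answer from Theorem~\ref{roots}, taking $\ell=2$ so that $G_2=G_\ell$ and $H_2=H_\ell$. First I would record what Lemma~\ref{scale} supplies: $a\in\F_q\setminus\F_2$, $b\in\F_q\setminus\F_4$, the relation $b^2+b=a^{-1}+1$ (which forces $a\ne1$ and also $\Tr(a^{-1})=\Tr(b^2+b)+\Tr(1)=\Tr(1)$, so the standing hypotheses of Section~\ref{sec:notation} all hold), and the identity $\Gamma=u\,\Gamma_\ell$ with $u=\sqrt h\in\F_q^*$. Since $m\not\equiv1\pmod3$ and $-\ell\equiv1\pmod3$, we have $m\not\equiv-\ell\pmod3$, so Theorem~\ref{maincubic}(1) gives $N_\ell=3$ and hence $\abs\Gamma=3$. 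Because $u\in\F_q^*$, the set $\Gamma$ lies in $\F_q$ exactly when $\Gamma_\ell$ does, and by Theorem~\ref{maincubic} (together with Lemma~\ref{irr} and Lemma~\ref{f0roots}) this occurs precisely when $\Gamma_\ell=\Lambda_0$ is the root set of $f_0(X)$, i.e.\ precisely when $c$ is a cube in $\F_{q^2}$; otherwise $\Gamma_\ell$ is the root set of an irreducible cubic, whose roots lie in $\F_{q^3}\setminus\F_q$, and scaling by $u\in\F_q^*$ keeps them there, giving $\Gamma\subseteq\F_{q^3}\setminus\F_q$.

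Next I would convert the cube condition on $c$ into the stated condition on $1+\omega e$. The proof of Proposition~\ref{zhengprop2} already records $c^{(q^2-1)/3}=(1+\omega^2e)^{(q^2-1)/3}$, and since
\[
(1+\omega e)(1+\omega^2 e)=e^2+e+1=h^3
\]
is a cube in $\F_{q^2}$, the elements $1+\omega e$ and $1+\omega^2 e$ are simultaneously cubes or non-cubes. Hence $c$ is a cube in $\F_{q^2}$ iff $1+\omega e$ is, which is the second assertion.

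The core of the argument is to pin down which of $\Lambda_0,\Lambda_1,\Lambda_2$ equals $\Gamma_\ell$ and to phrase the answer through $(e+\omega)^{(q^2-1)/3}$. The key arithmetic input is $(q^2-1)/3\equiv m\pmod3$ (since $(4^m-1)/3\equiv m$), combined with $1+\omega^2 e=\omega^2(e+\omega)$, which together yield
\[
c^{(q^2-1)/3}=\omega^{-m}(e+\omega)^{(q^2-1)/3}.
\]
Substituting this into Theorem~\ref{roots} I would verify, in each of the subcases $m\equiv0$ and $m\equiv2\pmod3$, that $\Gamma_\ell=\Lambda_0$ iff $(e+\omega)^{(q^2-1)/3}=\omega^m$, that $\Gamma_\ell=\Lambda_1$ iff $(e+\omega)^{(q^2-1)/3}=\omega$, and that $\Gamma_\ell=\Lambda_2$ in the one remaining case. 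Finally I would scale the root descriptions from Lemma~\ref{f0roots} and Lemma~\ref{f1roots} by $u$: using $1/b^2=e$, $1/(b^2+1)=e/(e+1)$, and $u e a^{-1}=u^4=h^2$, one gets $u\Lambda_0=\{\sqrt h(v+v^{-1}):v^3=c\}$, $u\Lambda_1=\{h^2+e\sqrt h(v+v^{-1}):v^3=\omega c\}$, and $u\Lambda_2=\{(h^2+e\sqrt h(v+v^{-1}))/(e+1):v^3=\omega^2 c\}$, which are the three explicit sets; the corresponding normalized cubics are $u^6 f_1(X/u)=X^3+h^2X^2+(e+1)hX+1$ (already computed in the proof of Proposition~\ref{zhengprop2}) and $u^6 f_2(X/u)=(e+1)X^3+h^2X^2+hX+e^2+1$, giving the two forms of $F(X)$.

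I expect the only genuine obstacle to be the case bookkeeping in the third paragraph: the uniform conclusion ``$\Gamma_\ell=\Lambda_1$ iff $(e+\omega)^{(q^2-1)/3}=\omega$'' rests on different inputs for $m\equiv0$ (where Theorem~\ref{roots}(2) matches the index to $c^{(q^2-1)/3}=\omega^{-k\ell}$) and for $m\equiv2\equiv\ell$ (where Theorem~\ref{roots}(3) matches the index to which $\omega^k c$ is a cube), and it is a small coincidence that both collapse to the same clean criterion once the displayed identity is applied. Everything else is the routine scaling algebra indicated above.
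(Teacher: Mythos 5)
Your proposal is correct and follows essentially the same route as the paper: transport via Lemma~\ref{scale}, apply Theorem~\ref{maincubic}/Theorem~\ref{roots} with $\ell=2$, convert the cube conditions on $c$ into conditions on $e+\omega$ via $c^{(q^2-1)/3}=(1+\omega^2e)^{(q^2-1)/3}$ and $(q^2-1)/3\equiv m\pmod 3$, and then scale the root sets and the cubics $f_1$, $f_2$ by $u=\sqrt{h}$. The only cosmetic difference is that you package the two congruence cases for $m$ into the single identity $c^{(q^2-1)/3}=\omega^{-m}(e+\omega)^{(q^2-1)/3}$, where the paper treats $3\mid m$ and $m\equiv 2\pmod 3$ separately; the computations and conclusions agree.
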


\begin{comment}
Magma verification in small cases:

for m in [1..12] do if m mod 6 in {3,5} then q:=2^m; _<x>:=PolynomialRing(GF(q^3)); _<y>:=PolynomialRing(GF(q^6));
w:=Roots(y^2+y+1)[1,1];
for se in GF(q) do if se notin {GF(q)!0,1} then e:=se^2; h:=Root(e^2+e+1,3);
S:={i[1]:i in Roots(x^(2*q^2+1)+h*x+e)};
if #S ne 3 then "wrong size S"; break m; end if;
c:=(1+w*se)/(1+w^2*se);
Fqflag:={i^q+i:i in S} eq {0};
if Fqflag ne IsPower(GF(q^2)!(1+w*e),3) then "wrongFq"; break m; end if;
if not Fqflag then 
if (e+w)^((q^2-1) div 3) eq w then if S ne {i[1]:i in Roots(x^3+h^2*x^2+h*(e+1)*x+1)} or {i^(q^2+q+1):i in S} ne {1} then "wrong1"; break m; end if;
elif S ne {i[1]:i in Roots((e+1)*x^3+h^2*x^2+h*x+e^2+1)}or {i^(q^2+q+1):i in S} ne {e+1}  then "wrong2"; break m; end if; end if;
sh:=SquareRoot(h);
if IsPower(GF(q^2)!(1+w*e),3) then T:={sh*(i[1]+1/i[1]):i in Roots(y^3+c)}; elif (e+w)^((q^2-1) div 3) eq w then T:={h^2+e*sh*(i[1]+1/i[1]):i in Roots(y^3+w*c)}; else T:={(h^2+e*sh*(i[1]+1/i[1]))/(e+1):i in Roots(y^3+w^2*c)}; end if;
if S ne T then "wrongS"; break m; end if;
end if; end for; "tested",m; end if; end for;

\end{comment}

\begin{proof}
By Lemma~\ref{scale} we have $a,b\in\F_q\setminus\F_2$ with $b^2+b=a^{-1}+1$ and  $\Gamma=\{u\delta\colon\delta\in\Gamma_2\}$. Thus $\Tr(1/a)=\Tr(1)$, so Theorem~\ref{main} implies that $\abs{\Gamma}=\abs{\Gamma_2}=3$.

We now determine $\Gamma_2$.
Defining $\Lambda_i$ as in Theorem~\ref{roots}, that result implies the following:
\renewcommand{\labelenumi}{(\alph{enumi})}
\begin{enumerate}
\item $c$ is a cube in $\F_{q^2}$ if and only if $\Gamma_2=\Lambda_0$;
\item if $3\mid m$ and $c$ is a non-cube in $\F_{q^2}$ then $\Gamma_2=\Lambda_k$ for the unique $k\in\{1,2\}$ such that $c^{(q^2-1)/3}=\omega^k$;
\item if $m\equiv 2\pmod 3$ and $c$ is a non-cube in $\F_{q^2}$ then there is a unique $k\in\{1,2\}$ for which $\omega^k c$ is a cube in $\F_{q^2}$, and this $k$ satisfies $\Gamma_2=\Lambda_{3-k}$.
\end{enumerate}

We now translate the above conditions on $c$ into conditions on $e$. Since $(b+\omega)(b+\omega^2)=b^2+b+1=u^3/e$, we have
\[
c^{(q^2-1)/3}=\bigl((b+\omega)^2e\bigr)^{(q^2-1)/3}=(1+\omega^2e)^{(q^2-1)/3}.
\]
Thus $c$ is a cube in $\F_{q^2}$ if and only if $1+\omega^2e$ is a cube, which is equivalent to $1+\omega e$ being a cube since $(1+\omega e)(1+\omega^2 e)=h^3$ is a cube. 
If $3\mid m$ then $\omega^{(q^2-1)/3}=1$, so that $c^{(q^2-1)/3}=(\omega+e)^{(q^2-1)/3}$.  
If $m\equiv 2\pmod 3$ then $\omega^{(q^2-1)/3}=\omega^2$, so that $c^{(q^2-1)/3}=\omega(\omega+e)^{(q^2-1)/3}$, and also $\omega^k c$ is a cube in $\F_{q^2}$ if and only if $c^{(q^2-1)/3}=\omega^k$.

We have shown that $1+\omega e$ is a cube in $\F_{q^2}$ if and only if $\Gamma_2=\Lambda_0$. 
Moreover, if $1+\omega e$ is not a cube in $\F_{q^2}$ then $\Gamma_2=\Lambda_1$ if and only if $(\omega+e)^{(q^2-1)/3}=\omega$, and $\Gamma_2=\Lambda_2$ otherwise. 
By Lemmas~\ref{f0roots} and \ref{f1roots}, for each $i\in\{0,1,2\}$ the set $\Lambda_i$ is the set of roots of the polynomial $f_i(X)$ from Section~\ref{sec:notation}. 
Since plainly every element of $\Gamma_2\cap\F_q$ is a root of $f_0(X)$, Proposition~\ref{fi} implies that $\Gamma_2=\Lambda_0$ if and only if $\Gamma_2\subseteq\F_q$. 
Now the result follows, since the polynomials $F(X)$ in items (1) and (2) of Proposition~\ref{zhengprop} are $h^3 f_1(X/u)$ and $h^3 f_2(X/u)$, respectively, and the three cases in the description of $\Gamma$ in Proposition~\ref{zhengprop} are $\{u\delta\colon \delta\in \Lambda_i\}$ for $i=0,1,2$ in that order.
\end{proof}

%#######################################################################
%#######################################################################
%#######################################################################

%\bibliographystyle{plain}


\begin{thebibliography}{9}
\newcommand{\au}[1]{{#1},}
\newcommand{\ti}[1]{\textit{#1},}
\newcommand{\jo}[1]{{#1}}
\newcommand{\vo}[1]{\textbf{#1}} 
\newcommand{\yr}[1]{(#1),}
\newcommand{\ppx}[1]{#1,}
\newcommand{\pp}[1]{#1.}
\newcommand{\pps}[1]{#1}
\newcommand{\bk}[1]{{#1},}
\newcommand{\inbk}[1]{in: \bk{#1}}
\newcommand{\xxx}[1]{{arXiv:#1}.}
\newcommand{\pn}[1]{Paper No. {#1},}

\bibitem{ACZ}
\au{S. S. Abhyankar, S. D. Cohen, and M. E. Zieve}
\ti{Bivariate factorizations connecting Dickson polynomials and Galois theory}
\jo{Trans. Amer. Math. Soc.}
\vo{352}
\pp{2871--2887}

\bibitem{BBKGMP}
\au{D. Bartoli, J. Bierbrauer, G. Kyureghyan, M. Giulietti, S. Marcugini, and F. Pambianco}
\ti{A family of semifields in characteristic $2$}
\jo{J. Algebraic Combin.}
\vo{45}
\yr{2017}
\pp{455--473}

\begin{comment}
\bibitem{Bi}
\au{J. Bierbrauer}
\ti{Projective polynomials, a projection construction and a family of semifields}
\jo{Des. Codes Cryptogr.}
\vo{79}
\yr{2016}
\pp{183--200}
% only odd characteristic
\end{comment}

\begin{comment}
\bibitem{BBFMP}
\au{J. Bierbrauer, D. Bartoli, G. Faina, S. Marcugini, and F. Pambianco}
\ti{A family of semifields in odd characteristic}
\jo{Des. Codes Cryptogr.}
\vo{86}
\yr{2018}
\pp{611--621}
% only odd characteristic
\end{comment}

\bibitem{BP}
\au{C. Blondeau and L. Perrin}
\ti{More differentially $6$-uniform power functions}
\jo{Des. Codes Cryptogr.}
\vo{73}
\yr{2014}
\pp{487--505}

\begin{comment}
\bibitem{B0}
\au{A. W. Bluher}
\ti{On $x^6+x+a$ in characteristic three}
\jo{Des. Codes Cryptogr.}
\vo{30}
\yr{2003}
\pp{85--95}
% only characteristic three
\end{comment}

\bibitem{Bluhermain}
\au{A. W. Bluher}
\ti{On $x^{q+1}+ax+b$}
\jo{Finite Fields Appl.}
\vo{10}
\yr{2004}
\pp{285--305}
% all characteristics

\bibitem{BluherAPNBC}
\au{A. W. Bluher}
\ti{On the existence of Budaghyan--Carlet APN hexanomials}
\jo{Finite Fields Appl.}
\vo{24}
\yr{2013}
\pp{118--123}

\bibitem{BluherDicksonidentity}
\au{A. W. Bluher}
\ti{A new identity of Dickson polynomials}
\jo{Finite Fields Appl.}
\vo{80}
\yr{2022}
Paper No. 102012, 32 pp.

\bibitem{BH}
\au{C. Bracken and T. Helleseth}
\ti{Triple-error-correcting BCH-like codes}
\inbk{2009 IEEE International Symposium on Information Theory} 1723--1725, 2009.

\bibitem{BTT}
\au{C. Bracken, C. H. Tan, and Y. Tan}
\ti{On a class of quadratic polynomials with no zeros and its application to APN functions}
\jo{Finite Fields Appl.}
\vo{25}
\yr{2014}
\pp{26--36}

\bibitem{BC}
\au{L. Budaghyan and C. Carlet}
\ti{Classes of quadratic APN trinomials and hexanomials and related structures}
\jo{IEEE Trans. Inform. Theory}
\vo{54}
\yr{2008}
\pp{2354--2357}

% Csajb\'ok-Marino-Polverino-Zullo address the number of roots of additive polynomials, but only mention projective polynomials by citing McGuire-Sheekey.

\bibitem{Dillonexc}
\au{J. F. Dillon}
\ti{Geometry, codes and difference sets: exceptional connections}
\inbk{Codes and designs (Columbus, OH, 2000)} 73--85, Ohio State Univ. Math. Res. Inst. Publ., vol. 10, de Gruyter, Berlin, 2002.

\bibitem{DFHR}
\au{H. Dobbertin, P. Felke, T. Helleseth, and P. Rosendahl}
\ti{Niho type cross-correlation functions via Dickson polynomials and Kloosterman sums}
\jo{IEEE Trans. Inf. Theory}
\vo{52}
\yr{2006}
\pp{613--627}

\bibitem{vzG}
\au{J. von zur Gathen}
\ti{Lower bounds for decomposable univariate wild polynomials}
\jo{J. Symbolic Comput.}
\vo{50}
\yr{2013}
\pp{409--430}

\bibitem{vzGGZ}
\au{J. von zur Gathen, M. Giesbrecht, and K. Ziegler}
\ti{Composition collisions and projective polynomials}
\inbk{ISSAC 2010---Proceedings of the 2010 International Symposium on Symbolic and Algebraic Computation} 123--130, ACM, New York, 2010.
% all characteristics

\bibitem{GologluAPNtrihex}
\au{F. G\"olo\u{g}lu}
\ti{Almost perfect nonlinear trinomials and hexanomials}
\jo{Finite Fields Appl.}
\vo{33}
\yr{2015}
\pp{258--282}
% Note that this addresses the number of roots in a certain subset of the field rather than the number of roots in the entire field.  But it also determines all roots in some set.

\bibitem{GologluPP}
\au{F. G\"olo\u{g}lu}
\ti{Classification of fractional projective permutations over finite fields}
\jo{Finite Fields Appl.}
\vo{81}
\yr{2022}
\pn{102027}
\pp{50 pp}

\bibitem{GologluAPNbiprojective}
\au{F. G\"olo\u{g}lu}
\ti{Classification of $(q,q)$-biprojective APN functions}
\jo{IEEE Trans. Inf. Theory},
to appear, DOI: 10.1109/TIT.2022.3220724.

\bibitem{GGMZ}
\au{F. G\"olo\u{g}lu, R. Granger, G. McGuire, and J. Zumbr\"agel}
\ti{On the function field sieve and higher splitting probabilities: application to discrete logarithms in $\F_{2^{1971}}$ and $\F_{2^{3164}}$}
\inbk{Advances in Cryptology---CRYPTO 2013. Part II} 109--128, Lecture Notes in Comput. Sci., vol. 8043, Springer, Heidelberg, 2013.

\bibitem{GGMZ2}
\au{F. G\"olo\u{g}lu, R. Granger, G. McGuire, and J. Zumbr\"agel}
\ti{Solving a $6120$-bit DLP on a desktop computer}
\inbk{Selected Areas in Cryptography -- SAC 2013} 136--152, Lecture Notes in Computer Science, vol. 8282, Springer, Berlin, 2014.

\bibitem{GK}
\au{F. G\"olo\u{g}lu and L. K\"olsch}
\ti{Counting the number of non-isotopic Taniguchi semifields}
arXiv:2207.13497

\bibitem{GKLWZ}
\au{R. Granger, T. Kleinjung, A. K. Lenstra, B. Wesolowski, and J. Zumbr\"agel}
\ti{Computation of a $30750$-bit binary field discrete logarithm}
\jo{Math. Comp.}
\vo{90}
\yr{2021}
\pp{2997--3022}

\bibitem{GKZec}
\au{R. Granger, T. Kleinjung, and J. Zumbr\"agel}
\ti{Breaking `$128$-bit secure' supersingular binary curves (or how to solve discrete logarithms in $\F_{2^{4\cdot 1223}}$ and $\F_{2^{12\cdot 367}}$)}
\inbk{CRYPTO 2014, Part II} 126--145, Lecture Notes in Computer Science, vol. 8617, Springer, Berlin, 2014.

\bibitem{GKZ2}
\au{R. Granger, T. Kleinjung, and J. Zumbr\"agel}
\ti{On the powers of $2$}
Cryptology ePrint Archive, Paper 2014/300,
\url{https://eprint.iacr.org/2014/300}

\bibitem{GKZdiscrete}
\au{R. Granger, T. Kleinjung, and J. Zumbr\"agel}
\ti{On the discrete logarithm problem in finite fields of fixed characteristic}
\jo{Trans. Amer. Math. Soc.}
\vo{370}
\yr{2018}
\pp{3129--3145}

\bibitem{GKZindiscreet}
\au{R. Granger, T. Kleinjung, and J. Zumbr\"agel}
\ti{Indiscreet logarithms in finite fields of small characteristic}
\jo{Adv. Math. Commun.}
\vo{12}
\yr{2018}
\pp{263--286}

\bibitem{HHKZLJ}
\au{T. Helleseth, L. Hu, A. Kholosha, X. Zeng, N. Li, and W. Jiang}
\ti{Period-different $m$-sequences with at most four-valued cross-correlation}
\jo{IEEE Trans. Inform. Theory}
\vo{55}
\yr{2009}
\pp{3305--3311}

\bibitem{HK}
\au{T. Helleseth and A. Kholosha}
\ti{On the equation $x^{2^l+1}+x+a=0$ over $\emph{\text{GF}}(2^k)$}
\jo{Finite Fields Appl.}
\vo{14}
\yr{2008}
\pp{159--176}

\bibitem{HK2}
\au{T. Helleseth and A. Kholosha}
\ti{$x^{2^l+1}+x+a$ and related affine polynomials}
\jo{Cryptogr. Commun.}
\vo{2}
\yr{2010}
\pp{85--109}

% Helleseth-Kholosha-Johansen addresses the additive polynomials that arise as directional derivatives of the polynomials in this paper.

\bibitem{HKN}
\au{T. Helleseth, A. Kholosha, and G. J. Ness}
\ti{Characterization of $m$-sequences of lengths $2^{2k}-1$ and $2^k-1$ with three-valued cross correlation}
\jo{IEEE Trans. Inform. Theory}
\vo{53}
\yr{2007}
\pp{2236--2245}

\bibitem{HKZLJ}
\au{T. Helleseth, A. Kholosha, X. Zeng, N. Li, and W. Jiang}
\ti{Period-different $m$-sequences with at most four-valued cross correlation}
\jo{IEEE Trans. Inform. Theory}
\vo{55}
\yr{2009}
\pp{3305--3311}

% Helleseth-Lahtonen-Rosendahl addresses solutions in a certain subset of F_q.

\bibitem{HZ}
\au{T. Helleseth and V. Zinoviev}
\ti{Codes with the same coset weight distribution as the $Z_4$-linear Goethals codes}
\jo{IEEE Trans. Inform. Theory}
\vo{47}
% no. 4
\yr{2001}
\pp{1589--1595}

\bibitem{HuK}
\au{D. R. Hughes and E. Kleinfeld}
\ti{Seminuclear extensions of Galois fields}
\jo{Amer. J. Math.}
\vo{82}
% no. 3
\yr{1960}
\pp{389--392}

\bibitem{JHZ}
\au{W. Jiang, L. Hu, and X. Zeng}
\ti{On the generalized large set of Kasami sequences}
\jo{Appl. Algebra Engrg. Comm. Comput.}
\vo{21}
\yr{2010}
\pp{417--441}

\bibitem{KZ}
\au{C. Kaspers and Y. Zhou}
\ti{The number of almost perfect nonlinear functions grows exponentially}
\jo{J. Cryptology}
\vo{34}
\yr{2021}
Paper No. 4, 37 pp.

\bibitem{KCM}
\au{K. H. Kim, J. Choe, and S. Mesnager}
\ti{Solving $X^{q+1}+X+a=0$ over finite fields}
\jo{Finite Fields Appl.}
\vo{70}
\yr{2021}
Paper No. 101797.

\bibitem{KCMcomplete}
\au{K. H. Kim, J. H. Choe, and S. Mesnager}
\ti{Complete solution over $\F_{p^n}$ of the equation $X^{p^k+1}+X+a=0$}
\jo{Finite Fields Appl.}
\vo{76}
\yr{2021}
Paper No. 101902, 13 pp.

\bibitem{KM}
\au{K. H. Kim and S. Mesnager}
\ti{Solving $x^{2^k+1}+x+a=0$ in $\F_{2^n}$ with $\gcd(n,k)=1$}
\jo{Finite Fields Appl.}
\vo{63}
\yr{2020}
Paper No. 101630.

\bibitem{KMCLLJ}
\au{K. H. Kim, S. Mesnager, J. H. Choe, D. N. Lee, S. Lee, and M. C. Jo}
\ti{On permutation quadrinomials with boomerang uniformity $4$ and the best known nonlinearity}
\jo{Des. Codes Cryptogr.}
\vo{90}
\yr{2022}
\pp{1437--1461}

\bibitem{KMKJ}
\au{K. H. Kim, S. Mesnager, C. H. Kim, and M. C. Jo}
\ti{Completely characterizing a class of permutation quadrinomials}
\jo{Finite Fields Appl.}
\vo{87}
\yr{2023}
Paper No. 102155.

\bibitem{Knuth}
\au{D. E. Knuth}
\ti{Finite semifields and projective planes}
\jo{J. Algebra}
\vo{2}
\yr{1965}
\pp{182--217}

\bibitem{KMW}
\au{G. Kyureghyan, P. M\"uller, and Q. Wang}
\ti{On the size of Kakeya sets in finite vector spaces}
\jo{Electron. J. Combin.}
\vo{20}
\yr{2013}
Paper 36, 10 pp.

\bibitem{LHXZ}
\au{N. Li, Z. Hu, M. Xiong, and X. Zeng}
\ti{A note on ``Cryptographically strong permutations from the butterfly structure"}
\jo{Des. Codes Cryptogr.}
\vo{90}
\yr{2022}
\pp{265--276}

\bibitem{LMT}
\au{R. Lidl, G. L. Mullen, and G. Turnwald}
\bk{Dickson Polynomials}
John Wiley \& Sons, Inc., New York, 1993.

\bibitem{LTW}
\au{J. Luo, Y. Tang, and H. Wang}
\ti{Cyclic codes and sequences: the generalized Kasami case}
\jo{IEEE Trans. Inform. Theory}
\vo{56}
\yr{2010}
\pp{2130--2142}

\bibitem{Massierer}
\au{M. Massierer}
\ti{Some experiments investigating a possible $L(1/4)$ algorithm for the discrete logarithm problem in algebraic curves}
Cryptology ePrint Archive, Paper 2014/996, \url{https://eprint.iacr.org/2014/996}

\bibitem{MS}
\au{G. McGuire and J. Sheekey}
\ti{A characterization of the number of roots of linearized and projective polynomials in the field of coefficients}
\jo{Finite Fields Appl.}
\vo{57}
\yr{2019}
\pp{68--91}
% all characteristics

\bibitem{MTW}
\au{A. Menezes, E. Teske, and A. Weng}
\ti{Weak fields for ECC}
Cryptology ePrint Archive, Paper 2003/128,
\url{https://eprint.iacr.org/2003/128}

\bibitem{MKCT}
\au{S. Mesnager, K. H. Kim, J. Choe, and Ch. Tang}
\ti{On the Menezes-Teske-Weng conjecture}
\jo{Cryptogr. Commun.}
\vo{12}
\yr{2020}
\pp{19--27}

\bibitem{QTL}
\au{L. Qu, Y. Tan, and C. Li}
\ti{On the Walsh spectrum of a family of quadratic APN functions with five terms}
\jo{Sci. China Inf. Sci.}
\vo{57}
\yr{2014}
028104, 7 pp.

\bibitem{ST}
\au{H. Stichtenoth and A. Topuzo\u{g}lu}
\ti{Factorization of a class of polynomials over finite fields}
\jo{Finite Fields Appl.}
\vo{18}
\yr{2012}
\pp{108--122}

\bibitem{Tang}
\au{C. Tang}
\ti{Infinite families of $3$-designs from APN functions}
\jo{J. Combin. Des.}
\vo{28}
\yr{2020}
\pp{97--117}

\bibitem{Taniguchi}
\au{H. Taniguchi}
\ti{On some quadratic APN functions}
\jo{Des. Codes Cryptogr.}
\vo{87}
\yr{2019}
\pp{1973--1983}

\bibitem{Ugolini}
\au{S. Ugolini}
\ti{On the iterations of the maps $ax^{2^k}+b$ and $(ax^{2^k}+b)^{-1}$ over finite fields of characteristic two}
\jo{J. Geom.}
\vo{112}
\yr{2021}
Paper No. 9, 18 pp.

\bibitem{Williams}
\au{K. S. Williams}
\ti{Note on cubics over {${\rm GF}(2^{n})$} and {${\rm GF}(3^{n})$}}
\jo{J. Number Theory}
\vo{7}
% no. 4
\yr{1975}
\pp{361--365}

% Xia-Zeng-Hu is odd q (in fact, q=3 mod 4)

\bibitem{XLW}
\au{C. Xiang, X. Ling, and Q. Wang}
\ti{Combinatorial $t$-designs from quadratic functions}
\jo{Des. Codes Cryptogr.}
\vo{88}
\yr{2020}
\pp{553--565}

\bibitem{XCQ}
\au{G. Xu, X. Cao, and L. Qu}
\ti{Infinite families of 3-designs and 2-designs from almost MDS codes}
\jo{IEEE Trans. Inform. Theory}
\vo{68}
\yr{2022}
\pp{4344--4353}

\begin{comment}
\bibitem{ZLHnonbinary}
\au{X. Zeng, N. Li, and L. Hu}
\ti{A class of nonbinary codes and sequence families}
\inbk{Sequences and their applications---SETA 2008} 81--94, 
Lecture Notes in Comput. Sci., 5203, Springer, Berlin, 2008.
% p odd
\end{comment}

\bibitem{ZLHKasami}
\au{X. Zeng, J. Q. Liu, and L. Hu}
\ti{Generalized Kasami sequences: the large set}
\jo{IEEE Trans. Inform. Theory}
\vo{53}
\yr{2007}
\pp{2587--2598}

\bibitem{ZKZPL}
\au{L. Zheng, H. Kan, T. Zhang, J. Peng, and Y. Li}
\ti{Two classes of permutation trinomials over $\F_{q^3}$ in characteristic two}
preprint.

\end{thebibliography}
\end{document}